\providecommand{\U}[1]{\protect\rule{.1in}{.1in}}
\renewcommand{\ll}{{\ell}}
\newcommand{\be}[1]{\begin{equation}\label{#1}}
\newcommand{\ee}{\end{equation}}
\let\pa\partial
\let\r\rho
\newtheorem{thm}{Theorem}[section]
\newtheorem{df}{Definition}[section]
\newtheorem{prop}{Proposition}[section]
\newtheorem{rem}{Remark}[section]
\newtheorem{cor}{Corollary}[section]
\numberwithin{equation}{section}
\newcommand{\beq}{\begin{eqnarray}}
\newcommand{\eeq}{\end{eqnarray}}
\newcommand{\beqs}{\begin{eqnarray*}}
\newcommand{\eeqs}{\end{eqnarray*}}
\newcommand{\bequ}{\begin{equation}}
\newcommand{\eequ}{\end{equation}}
\def\r{\rho}
\def\T{{\cal T}}
\def\M{{\cal M}}
\newcommand{\cal}{\mathcal}
\def\e{\varepsilon}
\newtheorem{lemma}{Lemma}[section]
\let\pa\partial
\date{October 1, 2016}
\begin{document}
\title[Global well-posedness nonlinear moisture dynamics with phase changes]{Global well-posedness for passively transported nonlinear moisture dynamics with phase changes}

\author{Sabine Hittmeir}\address[Sabine Hittmeir]{Fakult\"at f\"ur Mathematik, Universit\"at Wien, Oskar-Morgenstern-Platz 1,
1090 Wien, Austria}
\email{sabine.hittmeir@univie.ac.at}

\author{Rupert Klein}\address[Rupert Klein]{
FB Mathematik \& Informatik, Freie Universit\"at Berlin,
Arnimallee 6, 14195 Berlin, Germany}
\email{rupert.klein@math.fu-berlin.de}

\author{Jinkai Li}\address[Jinkai Li]{   Department of Mathematics, The Chinese University of Hong Kong, Shatin, N.T., Hong Kong. Also:
     Department of Computer Science and Applied Mathematics, Weizmann Institute of Science, Rehovot 76100, Israel}
\email{jklimath@gmail.com}

\author{Edriss~S.~Titi}
\address[Edriss~S.~Titi]{
Department of Mathematics, Texas A\&M University, 3368 TAMU, College Station, TX 77843-3368, USA. ALSO, Department of Computer Science and Applied Mathematics, Weizmann Institute of Science, Rehovot 76100, Israel.}
\email{titi@math.tamu.edu and edriss.titi@weizmann.ac.il}

\maketitle
\begin{abstract}
We study a moisture model for warm clouds that has been used by Klein and Majda in \cite{KM} as a basis for multiscale asymptotic expansions for deep convective phenomena. These moisture balance equations correspond to a  bulk microphysics closure in the spirit of Kessler \cite{Ke} and Grabowski and Smolarkiewicz \cite{GS}, in which water is present in the gaseous state as water vapor and in the liquid phase as cloud water and rain water. It thereby contains closures for the phase changes condensation and evaporation, as well as the processes of autoconversion of cloud water into rainwater and the collection of cloud water by the falling rain droplets.
Phase changes are associated with enormous amounts of latent heat and therefore provide a strong coupling to the thermodynamic equation.

In this work we assume the velocity field to be given and prove rigorously the global existence and uniqueness of uniformly bounded solutions
of the moisture model with viscosity, diffusion and heat conduction. To guarantee local well-posedness we first need to establish local existence results for linear parabolic equations, subject to the Robin boundary conditions on the cylindric type of domains under consideration.
 We then derive a priori estimates, for proving the maximum principle, using the Stampacchia method, as well as the iterative method by Alikakos \cite{A} to obtain uniform boundedness.
The evaporation term is of power law type, with an exponent in general less or equal to one and therefore making the proof of uniqueness more challenging. However, these difficulties can be  circumvented by introducing new unknowns, which satisfy the required cancellation and monotonicity properties in
the source terms.

\end{abstract}
\keywords{MSC subject classification: 35A01,
35B45, 35D35, 35M86, 35Q30, 35Q35, 35Q86, 76D03, 76D09, 86A10}

\allowdisplaybreaks
\section{Introduction}
Latent heat conversions due to phase changes of water in the atmosphere do
to a great extent influence the energy balance, which is made evident by
the following statement of Emanuel in \cite{S} on p.\,5: \emph{``If all
the water vapor near the surface on a hot muggy day were condensed out,
the air would warm by about $35^\circ C$"}. Although the total amount of
moisture even under saturated conditions is small compared to the dry
air components, its effect on the dynamics can be enormous, getting
visible e.g.\,in thunderstorms. To obtain a better understanding of such
complex processes involving the interaction of deep convective phenomena
with their environments,
Klein and Majda \cite{KM}  incorporated moisture processes into the
asymptotic framework by performing very careful nondimensionalisations.
Their studies involving multiple scales are thereby based on a warm cloud
model, where water is present in gaseous and liquid form. Using this setting
in further works they in particular also revealed interesting new results on the
modulation of gravity waves by columnar clouds \cite{RK,RKM}. The
closure for the source terms of the moisture quantities in that work corresponds
to a basic form of a bulk microphysics model in the spirit of
Kessler \cite{Ke} and Grabowski and Smolarkiewicz \cite{GS}. In the present
work we establish global well-posedness of the moisture model coupled through
the phase changes to the thermodynamic equation, where we assume the velocity
field to be given. The coupling to the primitive equations will be studied in
a forthcoming paper by taking over  the ideas of Cao and Titi \cite{CT} for
their recent breakthrough on the global solvability of the latter system.
Moreover, cases of  partial diffusions, arising from the asymptotical analysis in \cite{KM}, will be analyzed in a future work too based on the results by Cao et al.\,\cite{CLT1,CLT2,CLT3,CLT4}.

To the best of our knowledge the mathematical analysis of
moisture models with phase changes has been investigated only
in a few papers, see Coti-Zelati et al.\,\cite{BCZ,CZF,CZH,CZT}, Li and Titi \cite{LITITI-TCM2}, and Majda and Souganidis \cite{MAJSOU}.
The models studied by Coti-Zelati et al.\,\cite{BCZ,CZF,CZH,CZT} consist
of one moisture quantity coupled to the temperature and contain only the process
of condensation
during upward motion, see e.g.\,\cite{HW}. Since the source term  considered in \cite{BCZ,CZF,CZH,CZT} is
modeled via a Heavy side function as a switching term between
saturated and undersaturated regions, an approach based on differential
inclusions and variational techniques was used in these papers.
The moisture model investigated in \cite{LITITI-TCM2,MAJSOU} is a coupled nonlinear system of the moisture to the barotropic and the first baroclinic models of the velocity, where the source term in the moisture
is the precipitation.  This  model proposed by Frierson et al.\,\cite{FMP} for the tropical atmosphere
involves a small convective adjustment time
scale parameter  in the precipitation term.  The rigorous analysis was carried out in \cite{LITITI-TCM2} for both the finite-time
relaxation system and the instantaneous relaxation limiting system,
including the global well-posedness and the strong convergence
of the relaxation limit. Some moisture models without phase changes coupled to the primitive equations were also considered by Guo and Huang \cite{GH1,GH2}.

The model we are analyzing in this paper is physically more refined and consists of three moisture quantities for  water vapor, cloud water and rain water and contains besides all the phase changes due to condensation and evaporation also the autoconversion of cloud water to rain water after a certain threshold is reached, as well as a closure for the collection of cloud water by the falling rain droplets.

In the remainder of the introduction we first introduce the
moisture model in the setting of Klein and Majda \cite{KM} with additional diffusion, viscosity and heat conduction in
cartesian coordinates. We then reformulate the system in pressure
coordinates, which have the advantage that under the assumption
of hydrostatic balance the continuity equation takes the form of
the incompressibility condition. Although we assume the velocity field to be given this property is very useful when deriving a priori estimates, since it allows for cancellations of integral terms involving the advection terms. In the pressure coordinates the vertical diffusion term becomes nonlinear and we  make here the standard approximation by linearizing around a given background temperature profile, see also \cite{BCZ, CZF, LTW, PTZ}. In section \ref{sec.main} we
then formulate the full problem with boundary and side conditions
and state the main result on the global existence and uniqueness
of solutions.

Section \ref{sec.comp} contains the proof of local existence and some appropriate a priori estimates. The local existence is proved based on the
results established in section \ref{secellipticparabolic} for linear
parabolic equations, subject to the Robin boundary conditions on the cylindric type of domains.
The a priori estimates are derived by using the Stampaccia type arguments
and the iterative methods of Alikakos \cite{A}.

Section \ref{sec.uni} contains the proof of the well-posedness, i.e.,  the uniqueness and the continuous dependence of the solutions on the initial data. The proof is based on the typical $L^2$-type estimates for the difference between two solutions. Here the main difficulty is caused by the evaporation source term which is of power law type in $q_r$ with an exponent, generally, less or equal to $1$, see (\ref{Sev}), below.
This possible lack of Lipschitz continuity is overcome by introducing new unknowns, for which the source terms satisfy certain cancellation and monotonicity properties  in the estimates, such that the uniqueness of the solution can be concluded also for the original unknowns.

The last section, section \ref{secellipticparabolic}, is a technical section,
which gives the necessary results for some linear parabolic equations subject
to the Robin boundary conditions on the cylindric domains. These parabolic
equations are motivated by the problem considered in the present paper.
Due to the cylindric domains not being smooth, the corresponding
parabolic theory required for our analysis cannot be found explicitly in the existing literature. Therefore, for the sake of completeness, the derivations are
carried out here in detail.

\subsection{Moisture model in cartesian coordinates}\
In the following $\r, T, p, {\bf{u}}=(u,v), w$ are the density,  temperature, pressure, as well as horizontal and vertical velocity components respectively. The thermodynamic equation is given by
\beq
\frac{DT}{Dt} -\frac{RT}{c_p p }\frac{Dp}{Dt}=  S_T+ {\cal D}^TT\,,
\eeq
where ${\cal D}$ denotes here, and in what follows,   the turbulent or molecular  diffusion/viscosity  operators,  for which we will assume the simple closure
in form of the  full Laplacian. The term $S_T$ accounts for the diabatic source and sink terms, such as latent heating, radiation effects, etc., (see e.g.\,also \cite{CZF,CZH,KM}), but we will in the following only focus on the effect of latent heat in association with phase changes.
The total derivative contains the advection with respect to the given velocity field $(u,v,w)$ is denoted by:
\beq
\frac{D}{Dt}=\pa_t + u \pa_x+v\pa_y+w\pa_z\,.
\eeq
The ideal gas law with the gas constant $R$ reads
\beq\label{ideal}
p=R\r T\,.
\eeq
\begin{rem}
 Due to the difference of the gas constant for dry air and water vapor, for moist air more precisely the virtual temperature, or the density temperature
respectively if also condensed water is present, should be used in \eqref{ideal}. Moreover, the heat capacities in the presence of moisture vary as well. Notably, since these corrections are small, they are neglected in the following. For more details on the thermodynamics of moist air we refer, e.g., to \cite{E}.
\end{rem}
 To describe the state of the atmosphere a common thermodynamic quantity used is the potential temperature
\begin{equation}\label{PT}
\theta = T \left(\frac{p_0}{p}\right)^\frac{\gamma -1}{\gamma}\,,
\end{equation}
which is conserved during isentropic motions. Here and in the following the isentropic exponent $\gamma$ denotes the ratio of heat capacities $\gamma=c_p/c_V$ satisfying
\[\frac{\gamma -1}{\gamma}=\frac{R}{c_p}\,.\]
Then $\theta$ satisfies
\beq
\frac{D\theta}{Dt}  =  \frac{\theta}{T}(S_T + {\cal D}^T T)\,.
\eeq
In the case of moisture being present typically  the water vapor mixing ratio, defined as the ratio of the density of $\r_v$ over the density
of dry air $\r_d$\,,
\beq
q_{v}=\frac{\r_v}{\r_d}\,,
\eeq
is used for a measure of quantification.
If saturation effects occur, then water is also present in liquid form as cloud water and rain water which are represented by the additional moisture quantities
\beq
 q_{c}=\frac{\r_c}{\r_d}\,,\qquad q_{r}=\frac{\r_r}{\r_d}\,.
 \eeq
We focus here on warm clouds, where water is present only in gaseous and liquid form, i.e., no ice or snow phases occur.

For these mixing ratios for water vapor, cloud water and rain water we have the following moisture balances
\begin{eqnarray}
 \frac{D q_v}{Dt} &=& S_{ev} - S_{cd}+\cal{D}^{q_v} q_v\,,\\
\frac{D q_{c}}{Dt} &=& S_{cd} - S_{ac}- S_{cr}+\cal{D}^{q_c} q_c\,,\\
\frac{D q_{r}}{Dt} -\frac{V}{g\r}\pa_z(\r q_r) &=& S_{ac} + S_{cr}- S_{ev}+\cal{D}^{q_r} q_r\,,
\end{eqnarray}
where  $S_{ev}, S_{cd}, S_{ac}, S_{cr}$ are the rates of evaporation of rain water, the condensation of water vapor to cloud water and
the inverse evaporation process, the auto-conversion of cloud water into rainwater by accumulation of microscopic droplets,
and the collection of cloud water by falling rain. Moreover $V$ denotes the terminal velocity of falling rain and is assumed to be constant.

The key quantity to appear in the following explicit expressions for the source terms is the saturation mixing ratio
\beq q_{vs}=\frac{\r_{vs}}{\r_d}\,,\eeq
which gives the threshold for saturation as follows
\begin{itemize}
\item undersaturated region: $q_v<q_{vs}$,
\item saturated region: $q_v = q_{vs}$,
\item oversaturated region: $q_v>q_{vs}$.
\end{itemize}
Denoting $E=R/R_v$ as the ratio of the individual gas constants for dry air and water vapor, the saturation vapor mixing ratio satisfies
\beq
q_{vs}(p,T)=\frac{E e_s(T)}{p-e_s(T)},
\eeq
with the saturation vapor pressure $e_s$ as a function of $T$ being defined by the Clausius-Clapeyron equation:
\beq
e_s(T)=e_s(T_0)\exp\left(\frac{L}{R_v}\left(\frac{1}{T_0}-\frac{1}{T}\right)\right)\,,
\eeq
where the latent heat per unit mass of water vapor $L$ is here and in the following assumed to be constant, which in general
varies slightly with temperature.
From this formula it is obvious that $e_s$ increases in $T$, thereby quantifying the fact that the warmer the air is, the more moisture it can carry.
Only positive temperature values are meaningful.
Typically, the reference temperature $T_0=273.15K$ is used.

The Clausius-Clapeyron equation is only meaningful
for temperature ranges found in the troposphere, thus in particular
as $e_s$ vanishes with decreasing values in $T$, we shall pose in the following the natural assumption
\beq
\label{cond.qvs}
\label{nonneg.qvs}
e_s(T)=0 \, \quad \textnormal{and} \quad q_{vs}(p,T)=0, \quad \textnormal{for} \  T \leq \underline{T}\,,
\eeq
for some constant $\underline{T}\geq 0\, \textit{K}$, which will also
be helpful for proving nonnegativity of the solutions.
Moreover the saturation mixing ratio $q_{vs}$ is assumed to be nonnegative
and bounded, that is
\begin{equation}
  \label{1.3-1}
  0\leq q_{vs}(p,T)\leq q_{vs}^*,
\end{equation}
for a positive constant $q_{vs}^*$.
For deriving uniqueness of the solutions we need additionally the Lipschitz continuity of $q_{vs}$, i.e., we assume
\beq\label{LC}|q_{vs}(p, T_1)-q_{vs}(p,T_2)|\leq C|T_1-T_2|\,,\eeq
where $C$ is a positive constant.

\subsection{Explicit expressions for the source terms}

Condensation sets free enormous amounts of latent heat, which causes a warming of the surrounding environment,
and therefore is a source in the thermodynamic equation.
Evaporation on the other hand enters as a sink displaying its cooling effect.
The temperature source term accounting for the effects due to latent heat is therefore given by
\beq
S_T=\frac{L}{c_p}(S_{cd}-S_{ev})\,,
\eeq
see e.g.\,\cite{E,KM}.
All other non-moisture related diabatic sources for potential temperature like e.g.\,radiation effects are not being considered here.
For the source terms of the mixing ratios we take over the setting of Klein and Majda \cite{KM} corresponding to a  bulk microphysics closure in the spirit of Kessler \cite{K} and Grabowski and
Smolarkiewicz \cite{GS}:
\beq
S_{ev}&=&C_{ev}T(q^+_r)^\beta(q_{vs}-q_v)^+\,,\qquad \beta \in (0,1],\label{Sev}\\
S_{cr}&=&C_{cr} q_c q_r\,\label{Scr}\qquad \\
S_{ac}&=&C_{ac} (q_c-q_{ac}^*)^+,\label{Sac}
\eeq
where $C_{ev},C_{cr},C_{ac}$ are dimensionless rate constants. Moreover $(g)^+=\max\{0,g\}$
and the constant $q_{ac}^*\geq0$ denotes the threshold for cloud water mixing ratio beyond which autoconversion of cloud water into precipitation becomes active.
The exponent $\beta$ in the evaporation term $S_{ev}$ in the literature typically appears to be chosen as $\beta \approx 0.5$, see e.g.\,\cite{KM,GS} and references therein.

As we will see below, exponents $\beta \in (0,1)$ cause difficulties in the analysis, in particular for the uniqueness of the solutions.
In a forthcoming paper we will also demonstrate how in the case $\beta= 1$, which is used in the modelling of precipitating clouds e.g.\,in \cite{DSM,HMSS,MXM}, existence and uniqueness of solutions can be proven without diffusivity imposing higher regularity on the initial data.


We shall use the closure of the condensation term in a similar fashion to \cite{KM}
\beq\label{Scd}
S_{cd}=C_{cd}(q_v-q_{vs})q_c  +C_{cn}(q_v-q_{vs})^+\,,
\eeq
which is in the literature often defined implicitely via the equation of water vapor at saturation, see e.g.\,\cite{GS}.

\subsection{The dynamics in pressure coordinates}

Since the density of air varies strongly throughout the troposphere, the incompressibility assumption is only justified when describing shallow phenomena, and thus in general the full compressible governing equations need to be considered. However, under the assumption of hydrostatic balance
\[\frac{\pa p}{\pa z} = - g \r \,,\]
which in particular guarantees the pressure to decrease monotonically in height, the pressure can be used as the vertical coordinate.
This has the main advantage that the continuity equation takes the form of  the  incompressibility condition
\beq\label{div}
\pa_x u + \pa_y v + \pa_p \omega=0\,\qquad \textnormal{where} \quad \omega =\frac{dp}{dt}\,,
\eeq
see Lions et al.\,\cite{LTW} and  Petcu et al.\,\cite{PTZ}.

We therefore reformulate the dynamics in pressure coordinates
\[(x,y,p)\]
using hereafter the notation
\beqs
 \mathbf{v}_h=(u,v)\,,\qquad &\nabla_h=(\pa_x,\pa_y)\,,\qquad & \Delta_h=\pa_x^2+\pa_y^2\,.
\eeqs
Due to the different units of the horizontal and the vertical derivatives in pressure coordinates, it is inappropriate to combine them into a single gradient operator. The same holds for the velocity components having different units in vertical and horizontal directions. Nevertheless the total derivative  in pressure coordinates reads
\beq\label{tot.p}
\frac{D}{Dt}&=&\pa_t + \mathbf{v}_h\cdot \nabla_h + \omega\pa_p \,.
\eeq
For the closure of the turbulent and molecular transport terms we use
\beq\label{tot.diff}
{\cal D}^* &=& \mu_*\Delta_h+ \nu_*\pa_p\left(\left(\frac{g p }{R \bar T}\right)^2\pa_p\right)
\eeq
where $\bar T=\bar T(p)$ corresponds to some background distribution being uniformly bounded from above and from below away from 0. The operator ${\cal D}^*$  thereby provides a close approximation to the full Laplacian in cartesian coordinates, see also \cite{LTW,PTZ}. This linearization around the reference profile $\bar{T}$ is also applied to the vertical transport term of $q_r$ after replacing the density using the ideal gas law, such that the moisture equations in  pressure coordinates $(x,y,p)$ with corresponding velocities $(u,v,\omega)$ become
\beq
\label{eq.qv}\frac{D q_v}{Dt}  &=& S_{ev} - S_{cd}+\cal{D}^{q_v} q_v\,,\\
\label{eq.qc}\frac{D q_c}{Dt} &=& S_{cd} - S_{ac}- S_{cr}+\cal{D}^{q_c} q_c\,,\\
\label{eq.qr}\frac{D q_r}{Dt}+V\pa_p\left(\frac{p}{\bar{T}} q_r\right) &=& S_{ac} + S_{cr}- S_{ev}+\cal{D}^{q_r} q_r\,,
\eeq
where according to \eqref{tot.p}
\beq
\frac{D q_j}{Dt}=\pa_t q_j+ u\pa_xq_j+v\pa_y q_j+ \omega\pa_pq_j\eeq
and according to \eqref{tot.diff}
\beq {\cal D}^{q_j}q_j =\mu_{q_j}(\pa_x^2 q_j+\pa_y^2q_j)+ \nu_{q_j}\pa_p\left(\left(\frac{g p }{R \bar T}\right)^2\pa_p q_j\right)\,.\eeq
The closure of the source terms thereby remains unchanged. The temperature equation in pressure coordinates reads
\beq
\label{eq.T}\frac{DT}{Dt}-\frac{R T}{c_p p} \omega=\frac{L}{c_p}(S_{cd}-S_{ev})+ {\cal D}^TT\,,
\eeq
which can again be reformulated in terms of the potential temperature as follows
\beq
\label{eq.theta}\frac{D\theta}{Dt} = \frac{\theta}{T}\left(S_T+ {\cal D}^TT\right) =
\left(\frac{p_0}{p}\right)^{\frac{R}{c_p}}\frac{L}{c_p}(S_{cd}-S_{ev})+ \widetilde{\cal D}^\theta \theta\,,
\eeq
where the diffusion in vertical direction however deviates due to the additional pressure function arising when replacing $T$ in terms of $\theta$ according to \eqref{PT}
\beq
\widetilde{\cal D}^\theta \theta=\mu_T\Delta_h\theta+
\nu_T\left(\frac{p_0}{p}\right)^{R/c_p}\pa_p\left(\left(\frac{g p}{R\bar T}\right)^{2}\pa_p\left(\left(\frac{p}{p_0}\right)^{R/c_p}\theta \right)\right)\,.
\eeq
\section{Formulation of the problem and main result}\label{sec.main}
As in Coti Zelati et al.\,\cite{CZF} we let ${\M}$ be a cylinder of the form
\beq
{\M}=\{(x,y,p): (x,y) \in {\M}', p\in (p_1,p_0)\}\,,
\eeq
where ${\M}'$ is a smooth bounded domain in $\mathbb{R}^2$ and $p_0>p_1>0$. The boundary is given by
\beq
&&\Gamma_0=\{(x,y,p)\in \overline{{\M}}: p=p_0\}\,,  \\
&&
\Gamma_1=\{(x,y,p)\in \overline{{\M}}: p=p_1\}\,,\\
&&\Gamma_{\ll} = \{(x,y,p)\in \overline{{\M}}: (x,y)\in \pa {\M}', p_0\geq p\geq p_1\}\,.
\eeq
The boundary conditions read
\beq
\label{bound.0}&&\Gamma_0:\ \pa_pT=\alpha_{0T}(T_{b 0}-T)\,,\qquad \pa_pq_{j}=\alpha_{0j} (q_{b 0 j}-q_j) \,, \quad j\in\{v,c,r\}\,,\qquad\\
&&\Gamma_1:\ \pa_pT=0\,,\qquad \pa_pq_j=0\,,\quad j\in \{v,c,r\}\,, \label{ass.qr} \\
\label{bound.ll}&&\Gamma_{\ll}:\ \pa_nT=\alpha_{{\ll}T}(T_{b \ll}-T)\,,\qquad \pa_nq_{j}=\alpha_{{\ll} j} (q_{b {\ll} j}-q_j) \,, \quad j\in\{v,c,r\}\,,
\eeq
where the multipliers $\alpha_{0T}, \alpha_{0j}, \alpha_{\ll T}, \alpha_{\ll j}$ are nonnegative quantities, which are usually constants, but for the sake of generality are allowed to vary on the corresponding boundaries and also with time.
The functions $T_{b0}, q_{b0j}, T_{b\ll}, q_{b\ll j}$ are typical temperature and moisture profiles, which are again defined on the corresponding boundaries and may vary additionally in time.
Note that the special case of $\alpha_{\ll T}=\alpha_{{\ll} v}=\alpha_{{\ll} c}=\alpha_{{\ll} r}=0$ in (\ref{bound.0})--(\ref{bound.ll}) reduces to a similar setting used for the analysis of the moisture model in \cite{CZF}. We assume all given functions $\alpha_{0 j }, \alpha_{\ll j}, \alpha_{0 T }, \alpha_{\ll T}$ as well as
$T_{b 0 }, T_{b \ll},  q_{b 0 j }, q_{b \ll j }$ to be nonnegative, sufficiently smooth and uniformly bounded, say be in $C^1$.
We assume the velocity field to be given and to satisfy
\beq\label{reg.v}
u,v,\omega\in L^\infty(0,\mathcal T; L^2(\mathcal M))\cap L^2(0,\mathcal T; H^1(\mathcal M))\cap L^r(0,\mathcal T; L^q(\mathcal M)),
\eeq
for any finite time $\mathcal T$, and for some  $r\in[2,\infty]$ and $\sigma\in[3,\infty]$ with
\beq
\frac2r+\frac3\sigma<1\,,
\eeq
satsifying the well-known Prodi-Serrin regularity condition \cite{Se}, \cite{Pr}.
Moreover we assume mass conservation, which reduces in pressure coordinates as mentioned above  to
\beq\label{inc.v}
\nabla_h\cdot \mathbf{v}_h + \pa_p\omega =0 \qquad \textnormal{in} \ \ \M\,,
\eeq
as well as the no-flux conditions on the boundary of the domain
\beq\label{noflux.v}
\mathbf{v}_h\cdot \mathbf{n}_h + \omega\, n_p=0\,\qquad \textnormal{on} \ \ \pa\M\,.
\eeq
In the following we use the abbreviation
\beq\|f\|=\|f\|_{L^2(\M)}\,,\qquad \|f\|_{L^p}=\|f\|_{L^p(\M)}\,.\eeq
According to the weight in the vertical diffusion terms, we also  introduce the weighted norms
\beq\|f\|_w=\Big\|\left(\frac{g p}{R\bar T}\right) f\Big\|\,,\qquad \|f\|^2_{H_w^1}=\|f\|^2+\|\nabla_h f\|^2+\|\pa_p f\|_w^2\,,\eeq
where we emphasize, that since the weight $\frac{gR}{p\bar T}$ is uniformly bounded from above and below by positive constants, the $H_w^1(\M)$-norm is equivalent to the $H^1(\M)$-norm.

We summarize the results obtained below in the following theorem:
\begin{thm}
\label{thm}
Let the initial data  $(T_0,q_{v0},q_{c0},$
$q_{r0}) \in (L^\infty(\M))^4\cap (H^1(\mathcal M))^4$  be nonnegative and let the
given velocity field $(\mathbf{v}_h,\omega)$ satisfy the above
assumptions \eqref{reg.v}--\eqref{noflux.v}. Then, for any (arbitrarily
large) $\T\in(0,\infty)$, there exists a unique nonnegative solution
$(T,q_{v},q_{c},q_{r})$, with initial value $(T_0,q_{v0},q_{c0},
q_{r0})$, of the boundary value problem
\eqref{eq.qv}--\eqref{eq.T} subject to
\eqref{bound.0}--\eqref{bound.ll}, on $\mathcal M\times(0,\mathcal T)$,
satisfying
\begin{eqnarray}
 (T,q_{v},q_{c},q_{r})\in L^\infty(0,\mathcal T; L^\infty(\mathcal M))\cap L^2(0,\mathcal T; H^2(\mathcal M)),\\
 (T,q_{v},q_{c},q_{r})\in C([0,\mathcal T]; H^1(\mathcal M)),\quad (\partial_tT,\partial_tq_{v},\partial_tq_{c},\partial_tq_{r})\in L^2(0,\mathcal T; L^2(\mathcal M)).
\end{eqnarray}
\end{thm}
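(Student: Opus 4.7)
The plan is to prove the theorem in three stages: (i) local-in-time existence via a Banach fixed point argument built on the linear Robin-parabolic theory of Section~\ref{secellipticparabolic}; (ii) a priori bounds (nonnegativity, uniform $L^\infty$, $H^1$/$H^2$) which combine with local existence to give a global solution by continuation; (iii) uniqueness, which is the delicate step because of the non-Lipschitz evaporation exponent $\beta\in(0,1)$.

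For local existence I would freeze a candidate quadruple $(\bar T,\bar q_v,\bar q_c,\bar q_r)$ in a closed ball of $L^\infty(0,T^*;H^1(\M))\cap L^2(0,T^*;H^2(\M))$ intersected with an $L^\infty$ ball (to make sense of $(\bar q_r^+)^\beta$ and to exploit \eqref{1.3-1}), and substitute it into the source terms $S_{cd},S_{ev},S_{ac},S_{cr}$ and the advective terms. The resulting decoupled linear parabolic problems with Robin data \eqref{bound.0}--\eqref{bound.ll} on the cylindric domain $\M$ are uniquely solvable with the required regularity by Section~\ref{secellipticparabolic}. A contraction estimate in the weaker norm $C([0,T^*];L^2(\M))\cap L^2(0,T^*;H^1(\M))$ over a short interval then delivers the fixed point.

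For the a priori bounds, nonnegativity of each of $T,q_v,q_c,q_r$ follows from Stampacchia testing with the negative parts: $S_{ev}$ carries the factor $T(q_r^+)^\beta$ and thus vanishes when $T\le 0$ or $q_r\le 0$, $S_{ac}$ vanishes when $q_c\le q_{ac}^*$, and the Robin data contribute with nonnegative signs, so the negative parts satisfy a homogeneous energy inequality. For uniform $L^\infty$ control, I would first note that $q_v+q_c+q_r$ obeys a source-free balance (up to the rain fall-off term) giving $L^p$ bounds for every $p<\infty$; the Alikakos~\cite{A} iteration scheme applied to $(q_j-k)_+^{p-1}$ with $p\to\infty$ then converts these into time-uniform $L^\infty$ bounds on each $q_j$, and a $T$-bound follows via \eqref{LC} and the $S_{cd}-S_{ev}$ structure. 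Section~\ref{secellipticparabolic} furnishes the parabolic regularity that upgrades to the $L^2_tH^2_x\cap C_tH^1_x$ spaces of the statement, and continuation extends the solution to any finite $\T$.

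The main obstacle is the uniqueness step, because $(q_r^+)^\beta$ is only H\"older for $\beta<1$. Following the hint in the abstract, the remedy is a change of unknowns. Setting
\[
\eta := T + \frac{L}{c_p}\,q_v,\qquad \zeta := q_v+q_r,
\]
the latent-heat pairing $S_{cd}-S_{ev}$ cancels in the $\eta$-equation and the $S_{ev}$-contribution drops out of the $\zeta$-equation entirely, leaving only source terms that are Lipschitz in $(\eta,\zeta,q_c,q_r,T)$ under the bounds of Step~2 and assumption \eqref{LC}. Standard $L^2$ differencing for $(\eta,\zeta,q_c)$ therefore closes by a Gr\"onwall argument. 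The remaining control on $q_{r,1}-q_{r,2}$ is obtained by testing the difference of the two $q_r$-equations against $q_{r,1}-q_{r,2}$: the potentially dangerous contribution
$-C_{ev}\int_{\M} T_2\,(q_{vs}(p,T_1)-q_{v,1})^+\bigl[(q_{r,1}^+)^\beta-(q_{r,2}^+)^\beta\bigr]\,(q_{r,1}-q_{r,2})\,dx$
is in fact nonpositive, because $x\mapsto(x^+)^\beta$ is monotone increasing and its prefactor is pointwise nonnegative, so it sits on the favourable side of the inequality and may be discarded, while the remaining pieces are Lipschitz by \eqref{LC} and the $L^\infty$ bounds. The Prodi--Serrin assumption \eqref{reg.v} on the velocity is absorbed throughout by Ladyzhenskaya-type interpolation inequalities.
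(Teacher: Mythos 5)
Your overall strategy coincides with the paper's: local solvability from the linear Robin--parabolic theory of Section \ref{secellipticparabolic}, Stampacchia truncation for nonnegativity, the Alikakos iteration for uniform $L^\infty$ bounds, and a linear change of unknowns that cancels the non-Lipschitz evaporation term combined with the monotonicity of $x\mapsto (x^+)^\beta$ for uniqueness. One genuine (and legitimate) variant: your enthalpy variable $\eta=T+\frac{L}{c_p}q_v$ differs from the paper's $H=T-\frac{L}{c_p}(q_c+q_r)$, but both annihilate the pairing $S_{cd}-S_{ev}$, and together with $\zeta=q_v+q_r$ (the paper's $Q$) and $q_c,q_r$ your unknowns are an invertible linear transformation of $(T,q_v,q_c,q_r)$; the cross-diffusion mismatch terms and the antidissipative $\frac{RT}{c_pp}\omega$ term are handled the same way in either formulation. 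Your treatment of the $q_r$-difference --- discarding the term carrying $(q_{r,1}^+)^\beta-(q_{r,2}^+)^\beta$ by monotonicity together with the nonnegativity of the prefactor $T_2(q_{vs}(p,T_1)-q_{v,1})^+$ --- is exactly the paper's argument in \eqref{4.17-li}.

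The one step that fails as written is your proposed seed for the $L^\infty$ bounds. The sum $q_v+q_c+q_r$ does \emph{not} obey a usable source-free parabolic balance, because the diffusivities $\mu_{q_v},\mu_{q_c},\mu_{q_r}$ (and likewise $\nu_{q_v},\nu_{q_c},\nu_{q_r}$) may differ, so the right-hand side is $\sum_j\mathcal D^{q_j}q_j$ rather than a single operator applied to the sum. Testing with a power of the sum produces the quadratic form $\sum_{j,k}\mu_{q_j}\nabla_hq_j\cdot\nabla_hq_k=(\sum_j\mu_{q_j}\nabla_hq_j)\cdot(\sum_k\nabla_hq_k)$, whose symmetrization $\big(\tfrac{\mu_{q_j}+\mu_{q_k}}{2}\big)_{j,k}$ is indefinite unless all diffusivities coincide, so no $L^p$ bound follows this way. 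The repair is the paper's sequential ordering: after nonnegativity, $q_v\le q_v^*$ follows directly by testing \eqref{eq.qv} with $(q_v-q_v^*)^+$, since $q_v^*\ge q_{vs}^*$ makes every source contribution favourably signed on $\{q_v>q_v^*\}$; with $q_v$ bounded, the $q_c$-sources are at most $C(1+q_c)$ and the Alikakos iteration closes for $q_c$; then $q_r$ (using $S_{ev}\ge0$, which needs $T\ge 0$), and finally $T$ via $\theta$. Your Alikakos step is otherwise correct, so the proof goes through once the order of these estimates is fixed.
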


\section{Local well-posedness and a priori estimates}\label{sec.comp}

In this section we prove the local existence and some a priori estimates
of strong solutions to system (\ref{eq.qv})--(\ref{eq.T}), or equivalently system (\ref{eq.qv})--(\ref{eq.qr}) with (\ref{eq.theta}), subject to the boundary conditions (\ref{bound.0})--(\ref{bound.ll}).

Local existence of strong solutions is stated in the following proposition.

\begin{prop}\label{prop.local}
Let the initial data $(T_0,q_{v0},q_{c0},$
$q_{r0}) \in  (H^1(\mathcal M))^4$  be nonnegative and let the
given velocity field $(\mathbf{v}_h,\omega)$ satisfy the above
assumptions \eqref{reg.v}--\eqref{noflux.v}. Then there exists a
unique local solution $(q_v, q_c, q_r, \theta)$, which depends continuously on the initial data, in some short time
interval $(0,\mathcal T_0)$, to system
\eqref{eq.qv}--\eqref{eq.qr} with (\ref{eq.theta}), subject to
\eqref{bound.0}--\eqref{bound.ll}, with initial data $(q_{v0}, q_{c0},
q_{r0}, \theta_0)$, satisfying
\begin{eqnarray}
 (q_{v},q_{c},q_{r},\theta)\in C([0,\mathcal T_0]; H^1(\mathcal M))\cap L^2(0,\mathcal T_0; H^2(\mathcal M)),\\
 (\partial_tq_{v},\partial_tq_{c},\partial_tq_{r},\partial_t\theta)\in L^2(0,\mathcal T_0; L^2(\mathcal M)).
\end{eqnarray}
\end{prop}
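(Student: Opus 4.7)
The plan is to construct $(q_v,q_c,q_r,\theta)$ via a fixed-point iteration built on the linear parabolic theory established in Section \ref{secellipticparabolic}. Since the four scalar equations are coupled only through the source terms $S_{ev},S_{cd},S_{ac},S_{cr}$ (with the $\theta$-equation depending on $T=\theta(p/p_0)^{R/c_p}$), freezing the arguments of these sources reduces the problem to four decoupled linear advection--diffusion equations with Robin boundary data, each of which fits exactly into the framework of Section \ref{secellipticparabolic}.

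Concretely, for $\mathcal T_0>0$ to be chosen, I would work in
\[X_{\mathcal T_0}=\bigl(C([0,\mathcal T_0];H^1(\M))\cap L^2(0,\mathcal T_0;H^2(\M))\bigr)^4,\]
fix a closed ball $\mathcal B\subset X_{\mathcal T_0}$ containing the initial datum trivially extended in time, and define $\Phi:\mathcal B\to X_{\mathcal T_0}$ by $\Phi(\tilde q_v,\tilde q_c,\tilde q_r,\tilde\theta)=(q_v,q_c,q_r,\theta)$, where the image solves the linear system obtained by evaluating every source term at the tilde inputs. The linear theory of Section \ref{secellipticparabolic} supplies existence, uniqueness, and a maximal-regularity estimate in $X_{\mathcal T_0}$ with constants controlled by the velocity norms from \eqref{reg.v}. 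Using the Sobolev embedding $H^1(\M)\hookrightarrow L^6(\M)$ together with the uniform bound \eqref{1.3-1} and the Lipschitz property \eqref{LC} of $q_{vs}$, the nonlinear sources are polynomially bounded in $X_{\mathcal T_0}$, so choosing $\mathcal T_0$ sufficiently small forces $\Phi(\mathcal B)\subset\mathcal B$.

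The principal obstacle is the evaporation term $S_{ev}=C_{ev}T(q_r^+)^\beta(q_{vs}-q_v)^+$: when $\beta<1$ the map $q_r\mapsto(q_r^+)^\beta$ is only H\"older continuous at $0$, so $\Phi$ fails to be a contraction in the natural norm of $X_{\mathcal T_0}$ and a direct Banach argument is unavailable. I would circumvent this for existence by regularising, e.g.\ replacing $(q_r^+)^\beta$ by $(q_r^++\varepsilon)^\beta-\varepsilon^\beta$, running the now Lipschitz contraction on $\mathcal B$ for the regularised problem, and then passing to the limit $\varepsilon\downarrow 0$ using the uniform $X_{\mathcal T_0}$-bounds together with the Aubin--Lions compactness of $L^2(0,\mathcal T_0;H^2)\cap H^1(0,\mathcal T_0;L^2)$ in $L^2(0,\mathcal T_0;H^1)$. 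Nonnegativity of $T,q_v,q_c,q_r$, needed to make the $(\,\cdot\,)^+$'s and $(q_r^+)^\beta$ consistent with the unknowns in the limit, is maintained along the iteration by applying the weak maximum principle to each linear parabolic step, using the sign condition \eqref{cond.qvs} and the nonnegativity of the Robin data.

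For uniqueness and the continuous dependence on the initial data, the naive $L^2$-difference of two solutions again cannot be controlled through the $(q_r)^\beta$-factor. I would resolve this by the change-of-unknowns device announced in the abstract and developed in Section \ref{sec.uni}: one rewrites the difference system in suitable combinations of the original variables, chosen so that the contribution of $S_{ev}$ to the difference carries a favourable sign and cancels partially against $S_{cd},S_{ac},S_{cr}$, yielding a closed Gr\"onwall inequality whose coefficients depend only on the $L^\infty$- and $H^1$-bounds already produced in Section \ref{sec.comp}. Since the argument is purely local in time, it simultaneously delivers the uniqueness on $[0,\mathcal T_0]$ and the continuous dependence of $(q_v,q_c,q_r,\theta)\in X_{\mathcal T_0}$ on $(q_{v0},q_{c0},q_{r0},\theta_0)\in(H^1(\M))^4$, completing the proof of the proposition.
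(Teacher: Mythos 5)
Your overall architecture matches the paper's: linearize, invoke the Robin-boundary parabolic theory of Section~\ref{secellipticparabolic} (Corollary~\ref{corexistparabolic}) for each step, derive a uniform $X_{\mathcal T_0}$-bound on a short interval, and pass to the limit via Aubin--Lions. Where you diverge is in how the H\"older-only nonlinearity $(q_r^+)^\beta$ is tamed in the \emph{existence} step. The paper does \emph{not} regularise: it runs a Picard iteration $\mathbf U^{n+1}=\Phi(\mathbf U^n)$ and proves that the sequence is Cauchy in $C([0,\mathcal T_0^*];L^2)$ by performing the $L^2$-difference estimate not for $(q_v^{n+1}-q_v^n,\theta^{n+1}-\theta^n)$ but for the combinations $Q^n=q_v^n+q_r^n$ and $H^n=T^n-\tfrac{L}{c_p}(q_c^n+q_r^n)$, in which the $\pm S_{ev}$ contributions cancel algebraically, together with the one-sided monotonicity $[((q_r^n)^+)^\beta-((q_r^{n-1})^+)^\beta](q_r^n-q_r^{n-1})\ge 0$ for the $q_r$-equation. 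In other words, the paper deploys the change-of-unknowns device already inside the iteration, so that the whole sequence converges and no regularisation or diagonal extraction is needed. Your plan uses that device only for uniqueness and continuous dependence and instead regularises $(q_r^+)^\beta\mapsto(q_r^++\varepsilon)^\beta-\varepsilon^\beta$ to make a Banach contraction available. That can be made to work, and arguably the regularised route is conceptually more modular.

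Two points in your sketch need repair. First, the Lipschitz constant of $r\mapsto(r^++\varepsilon)^\beta-\varepsilon^\beta$ scales like $\varepsilon^{\beta-1}$, so the Banach contraction time degenerates as $\varepsilon\downarrow 0$; you cannot simply ``run the contraction on $\mathcal B$'' for each $\varepsilon$ and pass to the limit. You must first observe that the regularised nonlinearity satisfies $(r^++\varepsilon)^\beta-\varepsilon^\beta\le (r^+)^\beta$ (concavity), so the $X_{\mathcal T_0}$ a priori bound from Corollary~\ref{corexistparabolic} is $\varepsilon$-uniform, and then invoke a continuation argument to extend each $\varepsilon$-solution to a common interval $(0,\mathcal T_0)$ before letting $\varepsilon\downarrow 0$. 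Second, your claim that nonnegativity is ``maintained along the iteration by applying the weak maximum principle to each linear parabolic step'' is not correct: the right-hand side of each frozen-coefficient equation involves source and advection terms evaluated at the previous iterate with no definite sign, so the iterates need not stay nonnegative. It is also not needed, since the $(\,\cdot\,)^+$'s inside $S_{ev},S_{cd},S_{ac}$ make all source terms well-defined regardless of sign; nonnegativity of the \emph{limit} solution is a separate Stampacchia argument (Proposition~\ref{prop.comp}), not an invariant of the iteration.
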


\begin{proof}
The well-posedness, i.e., the uniqueness and continuous dependence on initial data, will be postponed to section \ref{sec.uni}, below. Therefore, we only prove the existence part here. We are going to prove the local existence by iteration procedure, that is we construct a
sequence of vector fields $\{(q_v^n, q_c^n, q_r^n, \theta^n)\}_{n=0}^\infty$,
and show that this sequence converges to a vector field $(q_v, q_c,
q_r, \theta)$, which is the desired strong solution. Let
$\mathcal T_0\leq1$ be a
positive number to be determined later.
For simplicity of notations, we set $\mathbf{U}=(q_v, q_c, q_r, \theta)$,
and denote
$X_{\mathcal T_0}:=C([0,{\mathcal T_0}]; H^1(\mathcal M))\cap L^2(0,{\mathcal T_0}; H^2(\mathcal M))$.

Set $\mathbf{U}^0=0$, and define $\mathbf{U}^{n+1}=(q_v^{n+1}, q_c^{n+1}, q_r^{n+1}, \theta^{n+1})$, $n=0,1,\dots,$ to be the unique strong solution to the linear parabolic system
\begin{eqnarray}
  \partial_tq_v^{n+1}-\mathcal D^{q_v}q_v^{n+1}&=&S_{q_v}^n-\textbf{v}_h\cdot\nabla_h q_v^n-\omega\pa_p q_v^n,\label{Loc1}\\
  \partial_tq_c^{n+1}-\mathcal D^{q_c}q_c^{n+1}&=&S_{q_c}^n-\textbf{v}_h\cdot\nabla_h q_c^n-\omega\pa_p q_c^n,\label{Loc2}\\
  \partial_tq_r^{n+1}-\mathcal D^{q_r}q_r^{n+1}&=&S_{q_r}^n-\textbf{v}_h\cdot\nabla_h q_r^n-\omega\pa_p q_r^n-V\partial_p\left(\frac{p}{\bar T}q_r^n\right),\label{Loc3}\\
  \partial_t\theta^{n+1}-\mathcal {\tilde D}^{\theta}\theta^{n+1}&=&S_{\theta}^n-\textbf{v}_h\cdot\nabla_h \theta^n-\omega\pa_p \theta^n,\label{Loc4}
\end{eqnarray}
subject to the boundary conditions (\ref{bound.0})--(\ref{bound.ll}) and the initial condition
\beq
(q_v^{n+1}, q_c^{n+1}, q_r^{n+1}, \theta^{n+1})|_{t=0}=(q_{v0}, q_{c0}, q_{r0}, \theta_0).
\eeq
Here, $S_{q_v}^{n}, S_{q_c}^n, S_{q_r}^n$ and $S_\theta^n$ are the
source terms, respectively, expressed by $S_{ev}^n-S_{cd}^n$, $S_{cd}^n-S_{ac}^n-S_{cr}^n$, $S_{ac}^n+S_{cr}^n-S_{ev}^n$ and
$(\frac{p_0}{p})^{\frac{R}{c_p}}\frac{L}{c_p}(S_{cd}^n-S_{ev}^n)$,
with $S_{ev}^n, S_{cr}^n, S_{ac}^n$ and $S_{cd}^n$ given by (\ref{Sev})--(\ref{Scd}), by replacing $(q_v, q_c, q_r, \theta)$ with
$(q_v^n, q_c^n, q_r^n, \theta^n)$ there.

For simplicity of notations, we denote the vector field of source term $\mathbf{S}_U^n$ as
\beq
\mathbf{S}_U^n=(S_{q_v}^n, S_{q_c}^n, S_{q_r}^n, S_\theta^n).
\eeq
Recalling the expressions of $S_{ev}, S_{cr}, S_{ac}$ and $S_{cd}$
in (\ref{Sev})--(\ref{Scd}) and the regularity assumptions on
$(\textbf{v}_h, \omega)$ in (\ref{reg.v}), one can check
(see the calculations (\ref{3.5-0})--(\ref{5.30.1}), below) by
the H\"older and Sobolev embedding inequalities
that $\mathbf{S}_U^n, (\mathbf{v}_h\cdot\nabla_h \mathbf{U}^n + \omega \pa_p  \mathbf{U}^n) \in
L^2(Q_{\mathcal T_0})$, for any $\mathbf{U}^n\in X_{\mathcal T_0}$, where $Q_{\mathcal T_0}=\mathcal M\times(0,\mathcal T_0)$.
Thus, by Corollary \ref{corexistparabolic} in section \ref{secellipticparabolic}, below, $(q_v^{n+1}, q_c^{n+1}, q_r^{n+1}, \theta^{n+1})$ is well-defined and satisfies
\begin{eqnarray}
\|\mathbf{U}^{n+1}\|_{X_{\mathcal T_0}}+\|\partial_t \mathbf{U}^{n+1}\|_{L^2(Q_{\mathcal T_0})}
&\leq& C\bigg(\|\mathbf{S}_{U}^n\|_{L^2(Q_{\mathcal T_0})}+\|\mathbf{v}_h\cdot\nabla_h \mathbf{U}^n\|_{L^2(Q_{\mathcal T_0})}\\
&&+
\|\omega\pa_p \mathbf{U}^n\|_{L^2(Q_{\mathcal T_0})}+\left\|\partial_p\Big(\frac{p}{\bar T}q_r^n\Big)\right\|_{L^2(Q_{\mathcal T_0})}+1\bigg),\nonumber
\end{eqnarray}
for a positive constant $C$, which is independent of $n$ and $\mathcal T_0$, while $\mathcal T_0\in (0,1]$.

Noticing that $| \mathbf{S}_{U}^n|\leq C(| \mathbf{U}^n|+| \mathbf{U}^n|^3), $
it follows from the Young and Sobolev embedding inequalities that
\begin{eqnarray}
\|\mathbf{S}_{U}^n\|_{L^2(Q_{\mathcal T_0})}&\leq&C(\|\mathbf{U}^n\|_{L^2(Q_{\mathcal T_0})}+\| \mathbf{U}^n\|_{L^6(Q_{\mathcal T_0})}^3)\nonumber\\
&\leq& C(\| \mathbf{U}^n\|_{L^2(0,\mathcal T_0; L^2(\mathcal M))}+\| \mathbf{U}^n\|_{L^6(0,\mathcal T_0; H^1(\mathcal M))}^3)\nonumber\\
&\leq&C\sqrt{\mathcal T_0}(\| \mathbf{U}^n\|_{L^\infty(0,\mathcal T_0; L^2(\mathcal M))}+\| \mathbf{U}^n\|_{L^\infty(0,\mathcal T_0; H^1(\mathcal M))}^3)\nonumber\\
&\leq&C\sqrt{\mathcal T_0}(1+\|\mathbf{U}^n\|_{X_{\mathcal T_0}}^3),\label{3.5-0}
\end{eqnarray}
for a constant $C$, which is independent of $n$ and $\mathcal T_0$, while $\mathcal T_0\in (0,1]$. Recalling the regularity assumption in (\ref{reg.v}), it follows from the H\"older and Sobolev inequalities that
\begin{align}
\|\mathbf{v}_h\cdot\nabla_h&  \mathbf{U}^n\|_{L^2(Q_{\mathcal T_0})}
\leq
\|\mathbf{v}_h\|_{L^{\frac{2\sigma}{\sigma-3}}(0,\mathcal T_0; L^\sigma(\mathcal M))}\|\nabla_h  \mathbf{U}^n\|_{L^{\frac{2\sigma}{3}}(0,\mathcal T_0; L^{\frac{2\sigma}{\sigma-2}}(\mathcal M))}\nonumber\\
  \leq&C\mathcal T_0^{\frac{\sigma-3}{2\sigma}-\frac1r}\|\textbf{v}_h\|_{L^r(0,\mathcal T_0; L^\sigma(\mathcal M))}\|\nabla_h \mathbf{U}^n\|_{L^\infty(0,\mathcal T_0; L^2(\mathcal M))}^{1-\frac3\sigma}\|\nabla_h \mathbf{U}^n\|_{L^2(0,\mathcal T_0; L^6(\mathcal M))}^{\frac3\sigma}\nonumber\\
  \leq&C\mathcal T_0^{\frac{\sigma-3}{2\sigma}-\frac1r}\|\textbf{v}_h\|_{L^r(0,\mathcal T_0; L^\sigma(\mathcal M))}\|\mathbf{U}^n\|_{L^\infty(0,\mathcal T_0; H^1(\mathcal M))}^{1-\frac3\sigma}\|\mathbf{U}^n\|_{L^2(0,\mathcal T_0; H^2(\mathcal M))}^{\frac3\sigma}\nonumber\\
  \leq& C\mathcal T_0^{\frac{\sigma-3}{2\sigma}-\frac1r}\|\textbf{v}_h\|_{L^r(0,\mathcal T; L^\sigma(\mathcal M))}\|\mathbf{U}^n\|_{X_{\mathcal T_0}}\leq C\mathcal T_0^{\frac{\sigma-3}{2\sigma}-\frac1r}\|\mathbf{U}^n\|_{X_{\mathcal T_0}},\label{5.30.1}
\end{align}
for a positive constant $C$, which is independent of $n$ and $\mathcal T_0$, while $\mathcal T_0\in (0,1]$; with an analogous estimate that can be established for $\|\omega\pa_p  \mathbf{U}^n\|_{L^2(Q_{\mathcal T_0})}$. Moreover,
\beq
\left\|\partial_p\left(\frac{p}{\bar T}q_r^n\right)\right\|_{L^2(Q_{\mathcal T_0})}\leq C\|q_r^n\|_{L^2(0,\mathcal T_0; H^1(\mathcal M))}\leq C\sqrt\mathcal T_0 \|q_r^n\|_{X_{\mathcal T_0}},
\eeq
for a positive constant $C$, which is independent of $n$ and $\mathcal T_0$, while $\mathcal T_0\in (0,1]$.

Therefore, we have
\begin{eqnarray}
\|\mathbf{U}^{n+1}\|_{X_{\mathcal T_0}}+\|\partial_t\mathbf{U}^{n+1}\|_{L^2(Q_{\mathcal T_0})}&\leq&
 C[\mathcal T_0^\delta(\|\mathbf{U}^n\|_{X_{\mathcal T_0}}+\|\mathbf{U}^n\|_{X_{\mathcal T_0}}^3)+1]\nonumber\\
&\leq& C_1(\mathcal T_0^\delta\|\mathbf{U}^n\|_{X_{\mathcal T_0}}^3+1),\label{Loc5}
\end{eqnarray}
for a positive constant $C_1$, which is independent of $n$ and $\mathcal T_0$, while $\mathcal T_0\in (0,1]$; and $\delta=\min\{\frac12,\frac{\sigma-3}{2\sigma}-\frac1r\}=\frac{\sigma-3}{2\sigma}-\frac1r>0$.
Set $M=2C_1$ and $\mathcal T_0=\min\left\{1,(2C_1)^{-\frac3\delta}\right\}$.
Then, thanks to (\ref{Loc5}), and recalling that $\mathbf{U}^0=0$, one can easily show by induction that
\beq
\|\mathbf{U}^{n+1}\|_{X_{\mathcal T_0}}+\|\partial_t\mathbf{U}^{n+1}\|_{L^2(Q_{\mathcal T_0})}\leq M,\quad n=0,1,2,\dots.\label{3.14li}
\eeq

 Next, we show that  $\{U^n\}_{n=1}^\infty$ is a Cauchy sequence in the space $C([0,\mathcal T_0^*]; L^2(\mathcal M))$, for a positive number $\mathcal T_0^*\in(0,\mathcal T_0)$. As we have pointed out in the Introduction, if the exponent  in the evaporation term $S_{ev}$, $\beta \in (0,1)$, we need to introduce the following  new unknowns
$$
Q^n=q_v^n+q_r^n,\quad H^n=T^n-\frac{L}{c_p}(q_c^n+q_r^n),\quad n=0,1,2,\cdots.
$$
To show that $\{U^n\}_{n=1}^\infty$ is a Cauchy sequence in the space $L^\infty(0,\mathcal T_0^*; L^2(\mathcal M))$, instead of carrying out the $L^2$-estimate for the difference $U^{n+1}-U^n$, we will perform
the corresponding estimate for $(Q^{n+1}-Q^n,q_c^{n+1}-q_c^n,q_r^{n+1}-q_r^n,H^{n+1}-H^n)$.

To simplify the  notation, we set
\begin{eqnarray*}
\alpha_n(t)&=&1+\|q_c^{n-1}\|_{L^\infty}^2+\|q_c^{n}\|_{L^\infty}^2+\| q_r^{n-1}\|_{L^\infty}^2+\|q_r^n\|_{L^\infty}^2+\|q_v^n\|_{L^\infty}^2 +\|\theta^n\|_{L^\infty}^2,\\
  \phi_n(t)&=&\|Q^n-Q^{n-1}\|^2+\|q_{r}^n-q_{r}^{n-1}\|^2+\|q_{c}^n-q_{c}^{n-1}\|^2+ \|H^n-H^{n-1}\|^2,
\end{eqnarray*}
for $n=1,2,\cdots.$ Then, thanks to (\ref{3.14li}), and using the Gagliardo-Nirenberg inequality, $\|f\|_{L^\infty(\mathbb R^3)}\leq C\|f\|_{L^2(\mathbb R^3)}^{\frac14} \|f\|_{H^2(\mathbb R^3)}^{\frac34}$, and the H\"older inequality, one deduces
\begin{eqnarray}
  \int_0^{\mathcal T_0^*}\alpha_n(t)dt &\leq& \mathcal T_0^*+
  C\sum_{f\in\{q_c^{n-1}, q_c^{n}, q_r^{n-1}, q_r^n, q_v^n,\theta^n\}}\int_0^{\mathcal T_0^*}\|f\|_{L^2}^{\frac12} \|f\|_{H^2}^{\frac32}dt\nonumber\\
  &\leq&\mathcal T_0^* +C(\mathcal T_0^*)^{\frac14}M^{\frac12} \sum_{f\in\{q_c^{n-1}, q_c^{n}, q_r^{n-1}, q_r^n, q_v^n,\theta^n\}}\left(\int_0^{\mathcal T_0^*} \|f\|_{H^2}^2dt\right)^{\frac34}\nonumber\\
  &\leq&\mathcal T_0^* +CM^2(\mathcal T_0^*)^{\frac14}. \label{3.18li}
\end{eqnarray}

We now perform
the $L^2$-estimate for $(Q^{n+1}-Q^n,q_c^{n+1}-q_c^n,q_r^{n+1}-q_r^n,H^{n+1}-H^n)$. Since the proof
is similar to that of showing the uniqueness in Proposition \ref{prop.uni},
below, we only sketch the proof here.
First, for the estimate of $Q^{n+1}-Q^n$, following the same arguments for
deriving (\ref{4.13-li}), one ends up with an energy inequality, which is the
same as (\ref{4.13-li}), but replacing $Q_1-Q_2$ and $q_{r1}-q_{r2}$ by $Q^{n+1}-Q^n$ and $q_r^{n+1}-q_r^n$, respectively, on the left and the first
line on the right of (\ref{4.13-li}), and replacing $q_{r1}, q_{r2}, Q_1, Q_2, q_{c1}, q_{c2}, q_{v1}, q_{v2}, T_1, T_2$ by $q_r^n, q_r^{n-1}, Q^n, Q^{n-1}$, $q_c^n$, $q_c^{n-1}$, $q_v^n, q_v^{n-1}, T^n, T^{n-1}$, respectively, in the other terms on the right of (\ref{4.13-li}). Then, by using (\ref{06202}), below, as well as the Young inequality, we have
\begin{eqnarray}
  &&\frac12\frac{d}{dt}\|Q^{n+1}-Q^n\|^2+\frac{\mu_{q_c}}{4}\|\nabla_h(Q^{n+1} -Q^n) \|^2+\frac{\nu_{q_v}}{4}\|\partial_p(Q^{n+1}-Q^n)\|_w^2\nonumber\\
  &\leq&C_Q(\mu_{q_r}\|q_r^{n+1}-q_r^n\|^2+\nu_{q_r} \|\partial_p(q_r^{n+1}-q_r^n)\|_w^2)+C[\phi_{n+1}(t)+\alpha_n(t)\phi_n(t)]. \label{9261LI}
\end{eqnarray}
Next, for the estimate of $q_r^{n+1}-q_r^n$, similar as before, following the same arguments for deriving (\ref{4.17-li}), one obtains a similar energy inequality
as (\ref{4.17-li}), from which, by using the monotonicity in the evaporation term $[(q_r^n)^\beta-(q_r^{n-1})^\beta](q_r^n-q_r^{n-1})\geq0$, as well as the Young inequality, one has the following inequality
\begin{align}
\frac12\frac{d}{dt}\|q_r^{n+1}-q_r^n\|^2&+\mu_{q_r}\|\nabla_h(q_r^{n+1}-q_r^n) \|^2+\frac{\mu_{q_r}}{2}\|\partial_p(q_r^{n+1}-q_r^n)\|_w^2\nonumber\\
  \leq&C[\phi_{n+1}(t)+\alpha_n(t)\phi_n(t)]. \label{9262}
\end{align}
And finally, similar arguments as for (\ref{est.qc.uni}) and (\ref{est.H.uni}) yield the estimates for $q_c^{n+1}-q_c^n$ and $H^{n+1}-H^n$ as follows
\begin{eqnarray}
  \frac12\frac{d}{dt}\|q_c^{n+1}-q_c^n\|^2+\mu_{q_c}\|\nabla_h(q_c^{n+1}-q_c^n)\|^2 +\nu_{q_c}\|\partial_p(q_c^{n+1}-q_c^n\|_w^2\leq C\alpha_n(t)\phi_n(t), \label{9263LI}
\end{eqnarray}
and
\begin{eqnarray}
  &&\frac12\frac{d}{dt}\|H^{n+1}-H^n\|^2+\frac{\mu_T}{2}\|\nabla_h(H^{n+1}-H^n)\|^2 +\frac{\nu_T}{2}\|\partial_p(H^{n+1}-H^n)\|_w^2\nonumber\\
  &\leq&C_H\Big(\mu_{q_c}\|\nabla_h(q_c^{n+1}-q_c^n)\|^2+\nu_{q_c}\|\partial_p (q_c^{n+1}-q_c^n)\|_w^2+\mu_{q_r}\|\nabla_h(q_r^{n+1}-q_r^n)\|^2\nonumber\\
  &&+\nu_{q_r}\| \partial_p(q_r^{n+1}-q_r^n)\|_w^2\Big)+C[\phi_{n+1}(t)+\alpha_n(t)\phi_n(t)]. \label{9264LI}
\end{eqnarray}

Set
$$
J_n(t) =\frac{1}{2C_Q} \|Q^n-Q^{n-1}\|^2+\|q_{r}^n-q_{r}^{n-1}\|^2+\|q_{c}^n-q_{c}^{n-1}\|^2+ \frac{1}{2C_H} \|H^n-H^{n-1}\|^2.
$$
Noticing that $\phi_n(t)\leq CJ_n(t)$, it follows from (\ref{9261LI})--(\ref{9264LI}) that
$$
\frac{d}{dt}J_{n+1}(t)\leq C[J_{n+1}(t)+\alpha_n(t)J_n(t)].
$$
Since $J_{n+1}(0)=0$ it follows from the above, by virtue  the Gronwall inequality, that
$$
J_{n+1}(t)\leq e^{Ct}\int_0^t\alpha_n(s)J_n(s)ds, \quad \hbox{for all} \quad t\in [0,\mathcal T_0^*] \subset [0,\mathcal T_0].
$$
Thanks to the above, recalling (\ref{3.18li}), and choosing $\mathcal T_0^*$ small enough, we have
$$
\sup_{0\leq t\leq\mathcal T_0^*}J_{n+1}(t)\leq e^{C\mathcal T_0^*}\left(\int_0^{\mathcal T_0^*}\alpha_n(t)dt\right)\sup_{0\leq t\leq\mathcal T_0^*}J_n(t) \leq\frac12\sup_{0\leq t\leq\mathcal T_0^*}J_n(t),
$$
and thus
$$
\sup_{0\leq t \leq\mathcal T_0^*}J_{n+1}(t)\leq \frac{C}{2^n}.
$$

Thanks to the above estimate, it is clear that $\{(Q^n,q_c^n,q_r^n,H^n)\}_{n=1}^\infty$ is a Cauchy sequence in $L^\infty(0,\mathcal T_0^*; L^2(\mathcal M))$, and consequently $\{U^n\}_{n=1}^\infty$ is a Cauchy sequence in the same space. By virtue of
this fact, and recalling the a priori estimate (\ref{3.14li}), by the Aubin-Lions lemma, there is a vector field $\mathbf{U}\in X_{\mathcal T_0^*}$, with $\partial_t\mathbf{U}\in L^2(Q_{\mathcal T_0^*})$, such that
\begin{eqnarray}
  \mathbf{U}^n&\rightharpoonup& \mathbf{U},\qquad \, \mbox{in }L^2(0,\mathcal T_0^*; H^2(\mathcal M)), \\
  \partial_t\mathbf{U}^n&\rightharpoonup&\partial_t\mathbf{U},\quad\mbox{ in }L^2(Q_{\mathcal T_0^*}), \\
  \mathbf{U}^n&\rightarrow &\mathbf{U},\qquad\mbox{ in }L^2(0,\mathcal T_0^*; H^1(\mathcal M))\cap C([0,\mathcal T_0^*]; L^2(\mathcal M)).
\end{eqnarray}
We point out that the above convergence holds for the whole sequence $\{U^n\}_{n=1}^\infty$, rather than only for a subsequence.

Using the above convergences, one can take the limit, as $n\rightarrow\infty$, in (\ref{Loc1})--(\ref{Loc4}) to show that $\mathbf{U}=(q_v, q_c, q_r, \theta)$ is a strong solution of the boundary value problem \eqref{eq.qv}--\eqref{eq.qr} with (\ref{eq.theta}), subject to \eqref{bound.0}--\eqref{bound.ll}, with initial data $(q_{v0}, q_{c0}, q_{r0}, \theta_0)$.
\end{proof}

In the following proposition we derive nonnegativity and uniform
boundedness for the moisture quantities and the temperature. Here the
sequence of the derivation of the individual bounds needs to be in the
right order to close the estimates consecutively.

\begin{prop}\label{prop.comp}
Let $\mathcal T\in(0,\infty)$ and $(T,q_v,q_c,q_r)$ be a solution to  \eqref{eq.qv}--\eqref{eq.T}  in $\mathcal M\times(0,\mathcal T)$ subject to the boundary conditions (\ref{bound.0})--(\ref{bound.ll}) with non-negative intial data $(T_0,q_{v 0},q_{c 0},q_{r 0})\in (L^\infty(\M))^4\cap (H^1(\mathcal M))^4$, satisfying the regularities stated in Proposition~\ref{prop.local} by replacing $\mathcal T_0$ with $\mathcal T$. Then for every $t\in [0,\mathcal T ]$ the solution $(T,q_{v},q_{c},q_{r})(t)$ satisfies
\beq
0\leq q_{v}\leq  q_v^*\,,\quad 0\leq q_{c}\leq q_c^*\,,\quad 0\leq q_{r}\leq  q_r^*\,,\quad 0\leq T\leq T^*\,,
\eeq
where
\beq\label{qv.star}
q_v^*=\max\big\{\|q_{v 0}\|_{L^\infty{(\M)}},\|q_{b0v}\|_{L^\infty((0,\T)\times\M')},\|q_{b\ll v}\|_{L^\infty((0,\T)\times\Gamma_\ll)}, q_{vs}^*\big\}
\eeq
with $q_{vs}^*$ being the constant in (\ref{1.3-1}), and $q_c^*,q_r^*,T^*$ are constants depending on the following quantities:
\beq
&&q_c^*\, =C_{q_c}\big(\T, \|q_{c 0}\|_{L^\infty{(\M)}},\|q_{b0c}\|_{L^\infty((0,T)\times\M')},\|q_{b\ll c}\|_{L^\infty((0,T)\times\Gamma_\ll)},q_v^*,q_{vs}^*\big)\,,\\
&&q_r^*\, =C_{q_r}\big(\T, \|q_{r 0}\|_{L^\infty{(\M)}},\|q_{b0r}\|_{L^\infty((0,T)\times\M')},\|q_{b\ll r}\|_{L^\infty((0,T)\times\Gamma_\ll)},q_c^*\big)\,,\\
&&T^*=C_{T}\big(\T,\|T_0\|_{L^\infty{(\M)}},\|T_{b0}\|_{L^\infty((0,T)\times\M')}, \|T_{b\ll}\|_{L^\infty((0,T)\times\Gamma_\ll)},q_v^*,q_c^*,q_{vs}^*\big)\,.
\eeq
\end{prop}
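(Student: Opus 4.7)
The source structure in \eqref{Sev}--\eqref{Scd} is triangular in the sense that $q_v$ is coupled to the other fields only through the bounded saturation $q_{vs}$ and through terms of favourable sign, then $q_c$ depends on $q_v$, then $q_r$ on $q_c$, and finally $T$ (equivalently $\theta$) on all three. Accordingly, the plan is first to establish nonnegativity of all four unknowns, then to derive $L^\infty$ upper bounds in the order $q_v \to q_c \to q_r \to \theta$, and lastly to convert the bound on $\theta$ into one on $T$ via $T = \theta(p/p_0)^{R/c_p}$ together with $p \in [p_1, p_0]$.

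For nonnegativity I would test each equation against its negative part. The signs work out because on $\{q_v < 0\}$ one has $S_{ev}\geq 0$ (from $T, q_r \geq 0$ and $q_v < q_{vs}$) while $S_{cd}\leq 0$ (from $q_v < q_{vs}$ and $q_c \geq 0$), and analogous sign checks hold for $q_c, q_r$, and for $T$ thanks to assumption \eqref{nonneg.qvs} which forces $q_{vs}$ to vanish below $\underline T$. The advection contribution integrates to zero by \eqref{inc.v}--\eqref{noflux.v}, and the Robin terms are of the favourable sign because $\alpha, q_b, T_b \geq 0$. A coupled Gronwall argument on the sum $\|q_v^-\|^2 + \|q_c^-\|^2 + \|q_r^-\|^2 + \|T^-\|^2$ then closes. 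For the $q_v$ upper bound, I would apply the Stampacchia truncation method: test the $q_v$-equation with $(q_v - q_v^*)^+$ for $q_v^*$ chosen as in \eqref{qv.star}. Since $q_v^* \geq q_{vs}^* \geq q_{vs}$, on the set $\{q_v > q_v^*\}$ the evaporation term vanishes and $S_{cd} \geq 0$; the Robin boundary integrals are $\leq 0$ because $q_v^*$ dominates the boundary data $q_{b0v}, q_{b\ell v}$; and advection vanishes as above. Since the initial truncation is identically zero by construction, one obtains $q_v \leq q_v^*$.

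For $q_c$, $q_r$ and $\theta$ I would invoke the Moser--Alikakos iteration. With $q_v$ already bounded, the $q_c$ source satisfies $S_{cd} - S_{ac} - S_{cr} \leq C(q_v^*, q_{vs}^*)(1 + q_c)$ because $S_{ac}, S_{cr}$ are sinks; testing with $q_c^{p-1}$, absorbing the Robin boundary terms and the velocity contributions into the weighted coercive energy, one obtains an $L^p$ inequality with constants depending tamely on $p$ that, upon Alikakos-type iteration and $p \to \infty$, yields $\|q_c(t)\|_\infty \leq q_c^*$. The same scheme applies to $q_r$, where the sedimentation flux $V\partial_p(p q_r / \bar T)$ is dealt with via integration by parts, producing a zeroth-order term controllable by $\|q_r\|_p^p$ plus tractable boundary remainders on $\Gamma_0 \cup \Gamma_1$; and then to $\theta$, whose source is linear in $\theta$ with a sink contribution from $-S_{ev}$, after which the bound on $T$ is immediate. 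The main obstacle is executing this $L^p$ iteration uniformly in $p$ on the cylindrical domain $\M$ with weighted vertical diffusion $(gp/R\bar T)^2 \pa_p$ and inhomogeneous Robin data: the boundary contributions arising from testing with $q^{p-1}$ must be absorbed into the diffusive coercivity with constants that grow only polynomially in $p$, and for $q_r$ the sedimentation flux adds an additional layer of boundary accounting on $\Gamma_0$ and $\Gamma_1$. This will rely crucially on the trace and Sobolev-type machinery developed in Section~\ref{secellipticparabolic}.
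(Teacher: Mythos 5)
Your proposal is correct in substance and follows the same strategy as the paper: negative-part (Stampacchia) testing for nonnegativity, truncation at the level $q_v^*$ for the water-vapor bound, and an $L^m\to L^\infty$ (Moser--Alikakos type) argument for $q_c, q_r, \theta$ in that consecutive order. Two differences are worth recording. First, your sign checks for nonnegativity are circular as stated: on $\{q_v<0\}$ you invoke $T\ge 0$ and $q_c\ge 0$ before these are proven. The paper avoids this by proving nonnegativity consecutively in the order $q_c$, $q_v$, $q_r$, $\theta$ via separate Gronwall arguments, estimating the residual cross-terms through $\|\cdot\|_{L^\infty(\M)}\lesssim\|\cdot\|_{H^2(\M)}$ (which is integrable in time by the assumed regularity) and, crucially, using \eqref{nonneg.qvs} to turn $T^-(q_r^+)^\beta(q_{vs}-q_v)^+q_v^-$ into $T^-(q_r^+)^\beta(q_v^-)^2$; your coupled Gronwall on the sum of the four negative parts can be made to work, but only after the same decompositions $T=T^+-T^-$, $q_c=q_c^+-q_c^-$ are carried out. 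Second, the ``main obstacle'' you identify (absorbing inhomogeneous Robin boundary terms with $p$-uniform constants) does not actually arise in the paper's argument: one tests with $m\,q_{j,k_j}^{m-1}$ where $q_{j,k_j}=(q_j-k_j)^+$ and $k_j$ dominates the initial \emph{and} boundary data, so the boundary integrals are pointwise nonpositive and can be dropped; likewise the sedimentation term is handled by expanding $\partial_p(p q_r/\bar T)$ and using Young's inequality rather than integrating by parts, so no boundary remainders on $\Gamma_0\cup\Gamma_1$ appear. The only genuinely delicate computation you underestimate is the vertical diffusion for $\theta$, where the conjugation by $(p/p_0)^{R/c_p}$ produces zeroth-order terms of the wrong sign that must be completed to a square and absorbed into $Cm\int_\M(1+\theta_{k_\theta}^m)\,d\M$.
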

\begin{proof}
(i)\underline{\emph{Nonnegativity of $q_v, q_c, q_r$ and $\theta$.}} For deriving this first part of the comparison principles we
employ the Stampacchia method and therefore test the equations of the
mixing ratios with their negative parts, where in the following we use
\beq
f=f^+-f^-\,\eeq
for splitting a function $f$ into its
positive and negative parts, with $f^+=\max\{f,0\}$ and $f^-=\max\{-f,0\}$.

For the aim of the later uses, we first carry out some calculations on
the integrals over the domain $\mathcal M$ of the products of the
diffusion and convection terms with $q_j^-$, $j\in\{v, c, r\}$. Integration
by parts and using the boundary conditions (\ref{bound.0})--(\ref{bound.ll}),
one deduces
\begin{eqnarray}
\int_\M q_j^-{\cal D}^{q_j}q_jd\M&=&\int_\M\left[\mu_{q_j}\Delta_hq_j+\nu_{q_j}\partial_p
\left(\left(\frac{gp}{R\bar T}\right)^2\partial_pq_j\right)\right]q_j^-d\M\nonumber\\
&=&\mu_{q_j}\int_{\Gamma_\ll}(\partial_nq_j)q_j^-d\Gamma_\ll+\nu_{q_j}\left.\int_{\M'}
\left(\frac{gp}{R\bar T}\right)^2(\partial_pq_j)q_j^-d\M'\right|_{p_1}^{p_0}\nonumber\\
&&-\int_\M\left[\mu_{q_j}\nabla_hq_j\cdot\nabla_hq_j^-+\nu_{q_j}\left(\frac{gp}{R\bar T}\right)^2\pa_pq_j\partial_pq_j^-\right]d\M\nonumber\\
&=&\mu_{q_j}\|\nabla_hq_j^-\|^2+\nu_{q_j}\|\partial_pq_j^-\|_w^2+\mu_{q_j} \int_{\Gamma_\ll}\alpha_{\ll j}(q_{\ll j}-q_j)q_j^-d\Gamma_\ll\nonumber\\
&&+\nu_{q_j}\int_{\M'}\left(\frac{gp_0}{R\bar T}\right)^2\alpha_{b0j}(q_{b0j}-q_j)q_j^-d\M'.
\end{eqnarray}
Since the functions $q_{\ll j}$ and $q_{0j}$ are both nonnegative and
$q_j q_j^- =- ( q_j^-)^2$ a.e., the last two boundary integrals
are nonnegative, and we obtain
\beq\label{est.diff.q}
  \int_\M q_j^-{\cal D}^{q_j}q_jd\M
\geq\mu_{q_j}\|\nabla_hq_j^-\|^2+\nu_{q_j}\|\partial_pq_j^-\|_w^2.
\eeq
The integral containing the advection term vanishes due to \eqref{inc.v} and \eqref{noflux.v}, since
\beq
 &&\int_\M (\mathbf{v}_h\cdot\nabla_h q_j + \omega \pa_p q_j) q_j^-d\M= -\frac12\int_\M (\mathbf{v}_h\cdot\nabla_h + \omega \pa_p) (q_j^-)^2 d\M\nonumber\\
&=&\label{est.trans.q} -\frac12\int_{ \pa\M}(\mathbf{v}_h\cdot\mathbf{n}_h + \omega n_p)(q_j^-)^2d(\pa\M)+\frac12\int_{ \M}(q_j^-)^2(\nabla_h\cdot \mathbf{v}_h + \pa_p\omega )d\M =0\,.\qquad
\eeq

We now proceed with the derivation of the nonnegativity of $q_v, q_c, q_r$ and $T$, where we start with the cloud water mixing ratio
$q_c$. Multiplying equation (\ref{eq.qc}) by
$-q_c^-$, recalling \eqref{est.diff.q}--\eqref{est.trans.q},
noticing that $q_c^-(q_c-q_{ac}^*)^+=0$ since $q_{ac}^*\geq0$ and applying the
Sobolev embedding
inequality, one deduces
\begin{eqnarray}
&&\frac12\frac{d}{dt}\int_\M (q_c^-)^2d\M \leq - \int_\M q_c^-(S_{cd}-S_{ac}-S_{cr}) d\M\nonumber \\
&=& -\int_\M q_c^-\big(C_{cd}(q_v-q_{vs})q_c +C_{cn}(q_v-q_{vs})^+- C_{ac}(q_{c}-q_{ac}^*)^+-C_{cr} q_{c}q_r\big)d\M\nonumber\\
&\leq&\int_{\mathcal M}(C_{cr}q_cq_r-C_{cd}(q_v-q_{vs})q_c)q_c^-d\mathcal M=\int_{\mathcal M}(C_{cd}(q_v-q_{vs})-C_{cr}q_r)(q_c^-)^2d\mathcal M\nonumber\\
&\leq& C(1+\|(q_v,q_r)\|_{L^\infty(\mathcal M)})\|q_c^-\|^2
\leq C(1+\|(q_v,q_r)\|_{H^2(\mathcal M)})\|q_c^-\|^2, \label{est.qc.0}
\end{eqnarray}
from which due to the Gronwall inequality we have
\beq
\|q_c^-\|^2(t)\leq e^{C\int_0^t(1+\|(q_v,q_r)\|_{H^2(\mathcal M)})dt}
\|q_{c0}^-\|^2=0\,,
\eeq
implying $q_c^-\equiv0$, and thus the nonnegativity of $q_c$.

We next prove the nonnegativity of $q_v$. Multiplying equation
\eqref{eq.qv} by $-q_v^-$, integrating the resultant over $\mathcal M$,
using \eqref{est.diff.q}--\eqref{est.trans.q} and noticing that
$(q_v-q_{vs})^+q_v^-=0$ due to $q_{vs}\geq0$, we get
\beq
&&\frac12\frac{d}{dt}\int_\M (q_v^-)^2d\M
\leq - \int_\M q_v^-(S_{ev}-S_{cd}) d\M \nonumber\\
&=& \int_\M\big(C_{cd}(q_v-q_{vs})q_c +C_{cn}(q_v-q_{vs})^+  - C_{ev}T(q_r^+)^\beta(q_{vs}-q_{v})^+\big)q_v^-d\M\nonumber\\
&= &\int_\M \big(C_{cd}(q_v-q_{vs})q_c- C_{ev}T(q_r^+)^\beta(q_{vs}-q_{v})^+\big)q_v^- d\M\nonumber\\
&\leq&\int_\M \big(C_{cd}(q_v-q_{vs})q_c+ C_{ev}T^-(q_r^+)^\beta(q_{vs}-q_{v})^+\big)q_v^- d\M.
\eeq
Recalling $q_{vs}\geq0,$ $q_c\geq0$ and $q_{vs}(p,T)=0$
for $T\leq0$ from (\ref{nonneg.qvs}), one can deduce
$q_{vs}q_cq_v^-\geq0$ and
$T^-(q_r^+)^\beta(q_{vs}-q_v)^+q_v^-=T^-(q_r^+)^\beta(-q_v)^+q_v^-
=T^-(q_r^+)^\beta(q_v^-)^2$. Using moreover the Sobolev and Young inequalities, one obtains
\begin{eqnarray}
  \frac12\frac{d}{dt}\|q_v^-\|^2&\leq&\int_{\mathcal M}\big(-C_{cd}q_c+C_{ev}T^-(q_r^+)^\beta\big)(q_v^-)^2d\mathcal M\nonumber\\
  &\leq&C(\|q_c\|_{L^\infty(\mathcal M)}+\|T\|_{L^\infty(\mathcal M)} \|q_r\|_{L^\infty(\mathcal M)}^\beta)\|q_v^-\|^2\nonumber\\
  \label{est.qv.min}&\leq&C(1+\|(q_c,q_r,\theta)\|_{H^2(\mathcal M)}^2)\|q_v^-\|^2\,.
\end{eqnarray}
In \eqref{est.qv.min} we made use of the fact that $\beta\in(0,1]$. Applying the Gronwall inequality to the resulting estimate, one obtains
\begin{eqnarray}
  \|q_v^-\|^2(t)\leq e^{C\int_0^t(1+\|(q_c,q_r,\theta)\|_{H^2(\mathcal M)}^2)dt}\|q_{v0}^-\|^2=0.
\end{eqnarray}
Therefore $q_v^-\equiv0$, implying again $q_v\geq0$.

We now turn to  the rain water mixing ratio $q_r$. Before proceeding in proving the nonnegativity, we show how to deal with the
integral involving the terminal velocity  by applying the Young inequality as follows
\beq
V\int_{\M}q_r^-\pa_p\left(\frac{p}{\bar{T}}q_r\right)d\M &=&
-V\int_{\M}\Big[(q_r^-)^2\pa_p\left(\frac{p}{\bar{T}}\right) +\left(\frac{p}{\bar{T}}\right)q_r^-\pa_pq_r^- \Big]d\M \nonumber\\
&\leq&C\|q_r^-\|^2+\frac{\nu_{q_r}}{2}\|\pa_pq_r^-\|^2_w.
\label{est.trans.V}
\eeq
Testing now \eqref{eq.qr} with $q_r^-$ and employing \eqref{est.diff.q}--\eqref{est.trans.V}, we obtain
\beq
&&\frac12\frac{d}{dt}\int_\M (q_r^-)^2d\M \leq C\|q_r^-\|^2- \int_\M q_r^-(S_{ac}+S_{cr}-S_{ev}) d\M \nonumber\\
&=&C\|q_r^-\|^2-\int_\M q_r^-\left(C_{ac}(q_c-q_{ac}^*)^+ +C_{cr} q_{c}q_r-C_{ev} T(q_r^+)^\beta(q_{vs}-q_v)^+\right)d\M\quad \nonumber\\
&\leq&C\|q_r^-\|^2+C_{cr}\int_{\mathcal M}q_c(q_r^-)^2d\mathcal M
\leq C(1+\|q_c\|_{L^\infty(\mathcal M)})\|q_r^-\|^2,
\eeq
where we have used $q_r^-(q_r^+)^\beta=0$. Applying the Gronwall inequality to the above inequality and using the Sobolev embedding theorem, we deduce
\begin{eqnarray}
  \|q_r^-\|^2(t)&\leq& e^{C\int_0^t(1+\|q_c\|_{L^\infty(\mathcal M)})dt} \|q_{r0}^-\|^2\nonumber\\
  &\leq&e^{C\int_0^t(1+\|q_c\|_{H^2(\mathcal M)})dt} \|q_{r0}^-\|^2=0.
\end{eqnarray}
Thus $q_r^-\equiv0$ and we obtain the desired nonnegativity $q_r\geq0$.

Finally, we prove the nonnegativity of $\theta$ and therefore  first deal with the
integrals involving the diffusion terms. Integration by parts
yields
\begin{align}
\int_\M\theta^-\,&\widetilde{\cal D}^\theta \theta d\M=
\int_\M\left[\mu_T\Delta_h\theta+\nu_T\left(\frac{p_0}{p}\right)^{\frac{R}{c_p}} \partial_p\left(\left(\frac{gp}{R\bar T}\right)^2
\partial_p\left(\left(\frac{p}{p_0}\right)^{\frac{R}{c_p}}\theta\right)
\right)\right]\theta^-d\M\nonumber\\
=&-\int_\M\left[\mu_T\nabla_h\theta\cdot\nabla_h\theta^-+\nu_T\left(\frac{gp} {R\bar T}\right)^2\partial_p\left(\left(\frac{p_0}{p}\right)^{\frac{R}{c_p}}
\theta^- \right)\partial_p\left(\left(\frac{p}{p_0}\right)^{\frac{R}{c_p}}\theta \right)\right] d\M\nonumber\\
&+\mu_T\int_{\Gamma_\ll}\theta^-\partial_n\theta d\Gamma_\ll+
\nu_T\left.\int_{\M'} \left(\frac{gp}{R\bar T}\right)^2\partial_p\left(\left(\frac{p}{p_0}\right)^{\frac{R}{c_p}}\theta\right)
\left(\frac{p_0}{p}\right)^{\frac{R}{c_p}}\theta^-d\M'\right|_{p_1}^{p_0}.
\end{align}
We recall that
$\theta=T(\frac{p_0}{p})^{R/c_p}$, as well as $T_{b\ll}\geq 0$ and $T_{b0}\geq 0$. By the boundary conditions
(\ref{bound.0})--(\ref{bound.ll})  we then have
\beq
&&\textnormal{on} \ \Gamma_\ll: \  \ \theta^-\pa_n\theta = \left(\frac{p_0}{p}\right)^{\frac{2R}{c_p}}T^-  \partial_nT=\left(\frac{p_0}{p}\right)^{\frac{2 R}{c_p}}
T^-\alpha_{\ll T}(T_{b\ll}-T)\geq 0\,,\\
&&\textnormal{on} \ \Gamma_0: \  \ \theta^-\partial_p\left(\left(\frac{p}{p_0}\right)^{R/c_p}\theta\right) =\theta^-\partial_pT=T^- \left(\frac{p_0}{p}\right)^{\frac{R}{c_p}}\alpha_{0T}(T_{b0}-T) \geq 0\,,\quad \\
&&\textnormal{on} \ \Gamma_1: \  \ \theta^-\partial_p\left(\left(\frac{p}{p_0}\right)^{R/c_p}\theta\right) =\theta^-\partial_pT=0.
\eeq
Straightforward computation of the integral containing the $p-$derivatives gives
\beq
&&\nu_T\int_\M\left(\frac{gp} {R\bar T}\right)^2\partial_p\left(\left(\frac{p_0}{p}\right)^{R/c_p} \theta^- \right)\partial_p\left(\left(\frac{p}{p_0}\right)^{R/c_p}\theta \right) d\M\nonumber\\
&& \qquad  = \nu_T \int_\M \left(- \left(\frac{gp} {R\bar T}\right)^2 (\pa_p \theta^-)^2 +\left(\frac{g}{c_p\bar T}\right)^2 (\theta^-)^2 \right)d\M\,.
\eeq
Thus, we have
\beq
\int_\M\theta^-\,\widetilde{\cal D}^\theta \theta d\M\geq \mu_T\left\|\nabla_h\theta^-\right\|^2+\nu_T\left\| \partial_p
\theta^-\right\|_w^2-\nu_T\int_{\mathcal M} \left(\frac{g}{c_p\bar T}\right)^2 (\theta^-)^2d\M\,.
\eeq
By the aid of the above, multiplying equation (\ref{eq.theta}) by $\theta^-$,
it follows from integration by parts that
\beq
\frac12\frac{d}{dt}\|\theta^-\|^2
&\leq&  - \frac{L}{c_p}\int_\M \left(\frac{p_0}{p}\right)^{\frac{R}{c_p}}\theta^-\big(C_{cd}(q_v-q_{vs})q_c+C_{cn}(q_v-q_{vs})^+q_c \big)d\M\nonumber\\
&&+ \frac{L}{c_p}\int_\M\theta^-\left(\frac{p_0}{p}\right)^{\frac{R}{c_p}} C_{ev}T(q_r^+)^\beta(q_{vs}-q_v)^+d\M\nonumber\\
&&+\nu_T\int_{\mathcal M} \left(\frac{g}{c_p\bar T}\right)^2 (\theta^-)^2d\M\leq C\|\theta^-\|^2,
\eeq
where we used the assumption (\ref{nonneg.qvs}) that $q_{vs}=0$ for $T\leq 0$ (or $\theta\leq 0$ respectively), and the nonnegativity of $q_v, q_c$ and $q_r$.
Since $\theta_0^-\equiv 0$, the Gronwall inequality implies again $\theta^-\equiv 0$ for all $t>0$, and thus $\theta\geq0$.

(ii) \underline{Boundedness of $q_v$.}
We will test the equation  \eqref{eq.qv} with $(q_v-q_v^*)^+$. For the diffusion operator we thereby proceed similar to above:
\beq
&&\int_\M (q_v-q_v^*)^+ \, {\cal D}^{q_v}q_v \,  d\M\nonumber\\
&=&\int_\M\left[\mu_{q_v}\Delta_hq_v+\nu_{q_v}\partial_p
\left(\left(\frac{gp}{R\bar T}\right)^2\partial_pq_v\right)\right](q_v-q_v^*)^+d\M\nonumber\\
&=&\mu_{q_v}\int_{\Gamma_\ll}(q_v-q_v^*)^+\partial_nq_v
d\Gamma_\ll+\nu_{q_v}\left.\int_{\M'}\left(\frac{gp}{R\bar T}\right)^2(q_v-q_v^*)^+\partial_pq_vdxdy\right|_{p_1}^{p_0}\nonumber\\
&&-\int_\M\left[\mu_{q_v}\nabla_hq_v\cdot\nabla_h (q_v-q_v^*)^+ +\nu_{q_v}\left(\frac{gp}{R\bar T}\right)^2\pa_pq_v\partial_p(q_v-q_v^*)^+\right]d\M\nonumber\\
&=&-\mu_{q_v}\|\nabla_h(q_v-q_v^*)^+\|^2-\nu_{q_v}\|\partial_p(q_v-q_v^*)^+\|_w^2\nonumber\\
&&+\mu_{q_v} \int_{\Gamma_\ll}\alpha_{\ll v}(q_{b\ll v}-q_v)(q_v-q_v^*)^+d\Gamma_\ll\nonumber\\
&&+\nu_{q_v}\int_{\M'}\left(\frac{gp_0}{R\bar T}\right)^2\alpha_{b0v}(q_{b0v}-q_v)(q_v-q_v^*)^+d\M'\bigg|_{p=p_0}.
\eeq
By the definition of $q_v^*$ we have
\beq\label{bound.est.qv}
(q_{b\ll v}-q_v)(q_v-q_v^*)^+\leq 0\quad\mbox{on }\Gamma_\ll\,,\qquad (q_{b0v}-q_v)(q_v-q_v^*)^+\leq 0\quad\mbox{on }\Gamma_0,
\eeq
implying
\beq\label{est.diff.qv}
  \int_\M(q_v-q_v^*)^+{\cal D}^{q_v}q_vd\M
\leq -\mu_{q_v}\|\nabla_h(q_v-q_v^*)^+\|^2- \nu_{q_v}\|\partial_p(q_v-q_v^*)^+\|_w^2,
\eeq
and leading further to the inequality
\beq
&&\frac12\frac{d}{dt}\int_\M ((q_v-q_v^*)^+)^2d\M \leq  \int_\M (q_v-q_v^*)^+(S_{ev}-S_{cd}) d\M \\
&= &\int_\M (q_v-q_v^*)^+\left(C_{ev}T(q_r^+)^\beta(q_{vs}-q_{v})^+-C_{cd} (q_v-q_{vs})q_c -C_{cn}(q_v-q_{vs})^+\right) d\M. \nonumber
\eeq
Thanks to the above inequality,  the definition of $q_v^*$ in \eqref{qv.star} (implying  in particular $q_v^*\geq q_{vs}$) and
the nonnegativity of $T$ and $q_c$, one has
\beq
\frac12\frac{d}{dt}\int_\M ((q_v-q_v^*)^+)^2d\M\leq0.
\eeq
Therefore, $\|(q_v-q_v^*)^+\|^2(t)\leq \|(q_{v0}-q_v^*)^+\|^2=0$, such that $q_v\leq q_v^*$.

(iii)\underline{\emph{Boundedness of $q_c,q_r$.}}
We start with the derivation of the uniform boundedness of $q_j$, with $j\in\{c,r\}$. For any $m\geq2$, we denote the cutoff function $q_{j,k_j}$ as
\beq
q_{j,k_j}=(q_j-k_j)^+ \eeq
with
\beq
k_j=\sup_{t\in[0,\T]}(\|q_{b\ll j}\|_{L^\infty(\Gamma_\ll)}+\|q_{b0j}\|_{L^\infty(\pa \M')}+\|q_{j0}\|_{L^\infty(\mathcal M)}),
\eeq
for $j\in\{c,r\}$. We will use the method of testing equations (\ref{eq.qc})--(\ref{eq.qr}) with $mq_{j,k_j}^{m-1}$, which
is typically employed for
equations with nonlinear diffusion, see e.g.\,\cite{K}. Integrating by
parts, the transport operator vanishes as before, and for the diffusion terms
we have
\beq
 \int_\M q_{j,k_j}^{m-1}{\cal D}^{q_j} q_j \, d\M &=& \mu_{q_j}\int_{\Gamma_\ll}  q_{j,k_j}^{m-1}\pa_n q_j d\Gamma_\ll + \nu_{q_j}\int_{\M'}  \left(\frac{gp}{R\bar T}\right)^2q_{j,k_j}^{m-1}\pa_p q_j \Big|_{p_1}^{p_0}d\M'\quad\\
&&-\int_\M \left(\mu_{q_j} \nabla_h q_j \cdot \nabla_h q_{j,k_j}^{m-1}+ \nu_{q_j} \left(\frac{gp}{R\bar T}\right)^2
\pa_p q_j  \pa_p q_{j,k_j}^{m-1}\right)d\M\,.\nonumber
\eeq
Since, by definition, $k_j$ is larger than the given boundary functions, we obtain
\beq
&&\textnormal{on} \ \Gamma_\ll: \quad q_{j,k_j}^{m-1}\pa_n q_j = ((q_{j}-k_j)^+)^{m-1}\alpha_{\ll j}(q_{b\ll j}-q_j)\leq 0\,, \\
&&\textnormal{on} \ \Gamma_0: \quad q_{j,k_j}^{m-1}\pa_p q_j = ((q_{j}-k_j)^+)^{m-1}\alpha_{0j}(q_{b0j}-q_j)\leq 0\,.
\eeq
Moreover since $m\geq 2$ we can reformulate
\beq
\int_\M  \nabla_h q_j \cdot \nabla_h q_{j,k_j}^{m-1}d\M= (m-1)\int_\M q_{j,k_j}^{m-2}|\nabla_h q_{j,k_j}|^2 d\M=
\frac{4(m-1)}{m^2} \|\nabla_h q^{\frac{m}{2}}_{j,k_j}\|^2
\eeq
with an analogous calculation for the $p$-derivatives.

We now start with the derivation of the uniform boundedness of $q_c$ by testing equation \eqref{eq.qc} with $mq_{c,k_c}^{m-1}$ for any $m\geq2$. Using the preceding computations and the Young inequality, we obtain directly from the uniform boundedness of $q_v$ derived before:
 \beq
&&\frac{d}{dt}\int_\M q_{c,k_c}^m d\M \leq - 4 \frac{m-1}{m}(\mu_{q_c}\|\nabla_hq_{c,k_c}^\frac{m}{2}\|^2+ \nu_{q_c}\|\pa_pq_{c,k_c}^\frac{m}{2}\|_w^2)\nonumber\\
&&\qquad +m\int_\M \big(C_{cd}(q_v-q_{vs})q_c+C_{cn}(q_v-q_{vs})^+ -C_{ac}(q_c-q_{ac}^*)^+-C_{cr}q_cq_r\big)q_{c,k_c}^{m-1}d\M\nonumber\\
&&\quad \leq - 4 \frac{m-1}{m}(\mu_{q_c}\|\nabla_hq_{c,k_c}^\frac{m}{2}\|^2+ \nu_{q_c}\|\pa_pq_{c,k_c}^\frac{m}{2}\|_w^2)+
 C m \int_\M ( q_{c,k_c}^m +q_{c,k_c}^{m-1} )d\M\nonumber\\
 &&\quad\leq C m\int_\M ( 1+q_{c,k_c}^m)d\M\leq Cm(1+\|q_{c,k_c}\|_{L^m(\mathcal M)}^m),
\eeq
from which, by the Gronwall inequality and  the definition of $k_c$, one obtains
\begin{eqnarray}
  \|q_{c,k_c}\|_{L^m(\mathcal M)}^m(t)\leq e^{Cmt}(Cmt+\|q_{c,k_c}(0)\|_{L^m(\mathcal M)}^m)=e^{Cmt}Cm(\mathcal T+1),
\end{eqnarray}
for any $t\in(0,\mathcal T)$. Thanks to this estimate, we have
\beq
\|q_{c,k_c}\|_{L^m(\mathcal M)}(t)\leq e^{Ct}(C(\mathcal T+1))^{\frac1m}m^{\frac1m},
\eeq
from which, by taking $m\rightarrow\infty$, one gets
$\|q_{c,k_c}\|_{L^\infty(\mathcal M)}(t)\leq e^{Ct}$ for all $t\in [0,\mathcal T]$, and thus
\beq
\|q_c\|_{L^\infty(\mathcal M\times(0,\mathcal T))}\leq k_c+e^{C\mathcal T}=:q_c^*.
\eeq

We next apply the same method to derive also uniform boundedness of $q_r$ by employing the test function $m q_{r,k_r}^{m-1}$. The main difference to the previous estimates constitutes the additional vertical transport term of $q_r$, which we shall bound as follows:
\begin{align}
-Vm \int_\M q_{r,k_r}^{m-1}\pa_p&\left(\frac{p}{\bar T} q_r\right)d\M =-Vm\int_\mathcal M q_{r,k_r}^{m-1}\left[q_r\partial_p\left(\frac{p}{\bar T}\right)+\frac{p}{\bar T}\partial_pq_r\right]d\mathcal M\nonumber\\
=& - Vm \int_\M q_{r,k_r}^{m-1}(q_{r,k_r}+k_r)\pa_p\left(\frac{p}{\bar T}\right)d\M  -2V \int_\M\frac{p}{\bar T} q_{r,k_r}^{\frac{m}{2}}\pa_pq_{r,k_r}^{\frac{m}{2}}d\M\nonumber\\
\leq& C m \int_\M \left(1+q_{r,k_r}^m\right) d\M + \nu_{q_r}\frac{2(m-1)}{m}\|\pa_pq_{r,k_r}^{\frac{m}{2}}\|_w^2\,,
\end{align}
where we applied the Cauchy-Schwarz and Young inequalities.
Then the estimate for $q_r$ becomes
 \beq
&&\frac{d}{dt}\int_\M q_{r,k_r}^m d\M \leq - 4 \frac{m-1}{m}\big(\mu_{q_r}\|\nabla_hq_{r,k_r}^\frac{m}{2}\|^2+ \frac{\nu_{q_r}}{2}\|\pa_pq_{r,k_r}^\frac{m}{2}\|_w^2\big)+ Cm\int_\M (1+q_{r,k_r}^m)d\M \nonumber\\
&&\qquad +m\int_\mathcal M\big( C_{ac}(q_c-q_{ac}^*)^++C_{cr}q_cq_r-C_{ev}T(q_r^+)^\beta (q_{vs}-q_v)^+ \big)q_{r,k_r}^{m-1}d\M\nonumber\\
&&\quad \leq - 2 \frac{m-1}{m}(\mu_{q_r}\|\nabla_hq_{r,k_r}^\frac{m}{2}\|^2+ \nu_{q_r}\|\pa_pq_{r,k_r}^\frac{m}{2}\|_w^2) +  Cm\int_\M (1+q_{r,k_r}^m)d\M\,,
\eeq
where we have used the nonnegativity of $T$ and the uniform boundedness of $q_c$. We thus obtain
\beq
\frac{d}{dt}\int_\M q_{r,k_r}^m d\M\leq Cm(1+\|q_{r,k_r}\|_{L^m(\mathcal M)}^m).
\eeq
By the same argument as that for $q_c$ above, we get
 the following estimate for $q_r$:
\beq
\|q_r\|_{L^\infty(\mathcal M\times(0,\mathcal T))}\leq k_r+e^{C\mathcal T}=:q_r^*.
\eeq

(iv) \underline{Boundedness of $\theta$.} We finally derive a similar estimate for $\theta$. As before we set
\beq
\theta_{k_\theta}=(\theta-k_\theta)^+ \quad \textnormal{with}\quad  k_\theta=\sup_{t\in[0,\T]}(\|\theta_{b\ll}\|_{L^\infty(\Gamma_\ll)} +\|\theta_{b0}\|_{L^\infty(\pa \M')}+\|\theta_0\|_{L^\infty(\mathcal M)}),
\eeq
where $\theta_{b\ll}=\big(\frac{p_0}{p}\big)^{R/c_p}T_{b\ll}$ and  $\theta_{b0}=\big(\frac{p_0}{p}\big)^{R/c_p}T_{b0}$ accordingly.
Similar to above, one can deduce by integration by parts and using the
boundary condition (\ref{bound.ll}) that
\begin{equation}
  \label{06201}
  \int_\mathcal M\theta_{k_\theta}^{m-1}\Delta_h\theta d\mathcal M\leq -\frac{4(m-1)}{m}\|\nabla_h\theta_{k_\theta}^{\frac m2}\|^2\leq0.
\end{equation}
While for the diffusion term in the $p$-direction, the calculations are more involved. Recalling that $\theta=T\left(\frac{p_0}{p}\right)^{\frac{R}{c_p}}$, it follows from
the boundary conditions (\ref{bound.0})--(\ref{ass.qr}) that
\begin{eqnarray}
  \left(\frac{p_0}{p}\right)^{\frac{R}{c_p}}\theta_{k_\theta}^{m-1}\partial_p \left(\left(\frac{p}{p_0}\right)^{\frac{R}{c_p}}\theta\right)&=& \left(\frac{p_0}{p}\right)^{\frac{R}{c_p}}\theta_{k_\theta}^{m-1}\partial_pT =\left(\frac{p_0}{p}\right)^{\frac{R}{c_p}}\theta_{k_\theta}^{m-1}\alpha_{0T} (T_{b0}-T)\nonumber\\
  &=&\theta_{k_\theta}^{m-1}\alpha_{0T}(\theta_{b0}-\theta)\leq0, \qquad \mbox{on }\Gamma_0,
\end{eqnarray}
and
\beq
\left(\frac{p_0}{p}\right)^{\frac{R}{c_p}}\theta_{k_\theta}^{m-1}\partial_p \left(\left(\frac{p}{p_0}\right)^{\frac{R}{c_p}}\theta\right)= \left(\frac{p_0}{p}\right)^{\frac{R}{c_p}}\theta_{k_\theta}^{m-1}\partial_pT =0,\quad\mbox{on }\Gamma_1.
\eeq
Thus, due to integration by parts,
\begin{eqnarray}
  &&\int_\mathcal M \left(\frac{p_0}{p}\right)^{\frac{R}{c_p}}\partial_p\left(\left(\frac{gp} {R\bar T}\right)^2\partial_p\left(\left(\frac{p}{p_0}\right)^{\frac{R}{c_p}} \theta\right)\right) \theta_{k_\theta}^{m-1}d\mathcal M\nonumber\\
  &=&-\int_\mathcal M\left(\frac{gp}{R\bar T}\right)^2 \partial_p\left(\left(\frac{p}{p_0}\right)^{\frac{R}{c_p}} \theta\right) \partial_p\left(\left(\frac{p_0}{p}\right)^{\frac{R}{c_p}} \theta_{k_\theta}^{m-1}\right)d\mathcal M\nonumber\\
  &&+\int_{\mathcal M'} \left(\frac{p_0}{p}\right)^{\frac{R}{c_p}}\theta_{k_\theta}^{m-1} \left(\frac{gp}{R\bar T}\right)^2\partial_p \left(\left(\frac{p}{p_0}\right)^{\frac{R}{c_p}}\theta\right)d\mathcal M'\bigg|_{p_1}^{p_0}\nonumber\\
  &\leq& -\int_\mathcal M\left(\frac{gp}{R\bar T}\right)^2 \partial_p\left(\left(\frac{p}{p_0}\right)^{\frac{R}{c_p}} \theta\right) \partial_p\left(\left(\frac{p_0}{p}\right)^{\frac{R}{c_p}} \theta_{k_\theta}^{m-1}\right)d\mathcal M.
\end{eqnarray}
Direct calculations yield
\begin{eqnarray}
  &&\partial_p\left(\left(\frac{p}{p_0}\right)^{\frac{R}{c_p}} \theta\right) \partial_p\left(\left(\frac{p_0}{p}\right)^{\frac{R}{c_p}} \theta_{k_\theta}^{m-1}\right)\nonumber\\
  &=&(m-1)\theta_{k_\theta}^{m-2}(\partial_p\theta_{k_\theta})^2+\frac{R}{c_pp} \left((m-1)\theta_{k_\theta}^{m-2}\theta-\theta_{k_\theta}^{m-1}\right) \partial_p\theta_{k_\theta}-\left(\frac{R}{c_pp}\right)^2\theta \theta_{k_\theta}^{m-1}\nonumber\\
  &=&(m-1)\theta_{k_\theta}^{m-2}\left[ (\partial_p\theta_{k_\theta})^2+\frac{R}{c_pp}\left(\theta-\frac{\theta_{ k_\theta}}{m-1}\right)\partial_p\theta_{k_\theta}\right] -\left(\frac{R}{c_pp}\right)^2\theta \theta_{k_\theta}^{m-1}\nonumber\\
  &=&(m-1)\theta_{k_\theta}^{m-2} \left(\partial_p\theta_{k_\theta} +\frac{R}{2c_pp}\left(\theta-\frac{\theta_{ k_\theta}}{m-1}\right)\right)^2 \nonumber\\
  &&-\left(\frac{R}{c_pp}\right)^2\left[\frac{m-1}{4} \left(\theta-\frac{\theta_{k_\theta}}{m-1} \right)^2+\theta\theta_{k_\theta}\right]\theta_{k_\theta}^{m-2}.
\end{eqnarray}
Plugging this relation into the previous inequality,  one obtains by the Young inequality
\begin{eqnarray}
  &&\int_\mathcal M \left(\frac{p_0}{p}\right)^{\frac{R}{c_p}}\partial_p\left(\left(\frac{gp} {R\bar T}\right)^2\partial_p\left(\left(\frac{p}{p_0}\right)^{\frac{R}{c_p}} \theta\right)\right) \theta_{k_\theta}^{m-1}d\mathcal M\nonumber\\
  &\leq&\int_\mathcal M \left(\frac{gp}{R\bar T}\right)^2 \left(\frac{R}{c_pp}\right)^2\left[\frac{m-1}{4} \left(\theta-\frac{\theta_{k_\theta}}{m-1} \right)^2+\theta\theta_{k_\theta}\right]\theta_{k_\theta}^{m-2}d\mathcal M\nonumber\\
  &=&\left(\frac{g}{c_p\bar T}\right)^2\int_\mathcal M\left.\left[\frac{m-1}{4} \left(\theta-\frac{\theta_{k_\theta}}{m-1} \right)^2+\theta\theta_{k_\theta}\right]\right|_{\theta\geq k_\theta}\theta_{k_\theta}^{m-2}d\mathcal M\nonumber\\
  &=&\left(\frac{g}{c_p\bar T}\right)^2\int_\mathcal M\left.\left[\frac{m-1}{4} \left(\frac{m-2}{m-1}\theta_{k_\theta}+k_\theta \right)^2+\theta_{k_\theta}^2+k_\theta \theta_{k_\theta}\right]\right|_{\theta\geq k_\theta}\theta_{k_\theta}^{m-2}d\mathcal M\quad \nonumber\\
  &\leq&C\int_\mathcal M[m(\theta_{k_\theta}+k_\theta)^2+\theta_{k_\theta}^2+k_\theta \theta_{k_\theta}]\theta_{k_\theta}^{m-2} d\mathcal M \leq Cm\int_\mathcal M(1+\theta_{k_\theta}^m)d\mathcal M.
\end{eqnarray}
Combing the above estimate with (\ref{06201}) yields
\beq\label{bound.diff.theta}
\int_\mathcal M\tilde{\mathcal D}^\theta \theta \theta_{k_\theta}^{m-1} d\mathcal M\leq C m\int_\mathcal M(1+\theta_{k_\theta}^m)d\mathcal M.
\eeq
We now test equation (\ref{eq.theta}) with $m\theta_{k_\theta}^{m-1}$ and integrate by parts
 \beq
&&\frac{d}{dt}\int_\M \theta_{k_\theta}^m d\M \leq Cm\int_\M\left(\theta_{k_\theta}^m + 1\right)d\M\nonumber\\
&&\qquad +m\frac{L}{c_p}\int_\M \left(\frac{p_0}{p}\right)^\frac{R}{c_p} \big(C_{cd}(q_v-q_{vs})q_c + C_{cn}(q_v-q_{vs})^+\big)\theta_{k_\theta}^{m-1}d\M\nonumber\\
&&\qquad -m\frac{L}{c_p}\int_\M \left(\frac{p_0}{p}\right)^\frac{R}{c_p}C_{ev}T(q_r^+)^\beta(q_{vs}-q_v)^+ \theta_{k_\theta}^{m-1}d\M\nonumber\\
&& \quad \leq   Cm \int_\M (\theta_{k_\theta}^{m}+
1) d\M \,,
\eeq
where we made use of \eqref{bound.diff.theta},  the boundedness of $q_v, q_c$ and the nonnegativity of $T$. By the same argument as before for $q_c$ and $q_r$, it follows that
\beq
\|\theta\|_{L^\infty(\mathcal M\times(0,\mathcal T))}\leq k_\theta+e^{C\mathcal T},
\eeq
proving the upper bound for $T$, on the interval $[0,\mathcal T]$.
\end{proof}

\section{Global existence and well-posedness}\label{sec.uni}
In this section we prove our main result, i.e., the global existence and well-posedness, i.e., the  uniqueness and the continuous dependence with respect to the initial data,  of the
strong solutions to system (\ref{eq.qv})--(\ref{eq.T}), or equivalently
system (\ref{eq.qv})--(\ref{eq.qr}) with (\ref{eq.theta}), subject to the
boundary conditions (\ref{bound.0})--(\ref{bound.ll}).

The uniqueness, and the Lipschitz continuity of the solutions on the initial data,  is guaranteed by the following proposition, which, as we already mentioned in the Introduction,
requires the Lipschitz continuity of the saturation mixing stated in \eqref{LC}.

\begin{prop}\label{prop.uni}
Let $(T_i,q_{vi},q_{ci},q_{ri})$ for $i\in \{1,2\}$ be two strong solutions, on the interval $[0, \T]$, of  \eqref{eq.qv}--\eqref{eq.T}, subject to (\ref{bound.0})--(\ref{bound.ll}), with initial data $(T_i^0,q_{vi}^0,q_{ci}^0,q_{ri}^0)\in (L^\infty(\M))^4\cap (H^1(\mathcal M))^4$. Then, the following estimate holds
\beqs
\sup_{t\in[0,\T]}\Big(\|T_1-T_2\|^2+ \sum_{j\in\{v,c,r\}}\|q_{j1}-q_{j2}\|^2\Big)\leq C e^{C_0\T}\Big(\|T^0_1-T^0_2\|^2+ \sum_{j\in\{v,c,r\}}\|q^0_{j1}-q^0_{j2}\|^2\Big)
\eeqs
for a positive constant $C_0$, implying in particular the uniqueness of the solutions.
\end{prop}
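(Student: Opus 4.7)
The plan is to perform $L^2$-energy estimates on the differences of the two solutions, but first to change unknowns in a way that neutralizes the non-Lipschitz evaporation source $S_{ev}=C_{ev}T(q_r^+)^\beta(q_{vs}-q_v)^+$, whose obstruction is the factor $(q_r^+)^\beta$ with $\beta\in(0,1)$. Following the strategy already employed in the local existence proof, I would introduce, for each solution, the auxiliary unknowns
$$Q_i=q_{vi}+q_{ri},\qquad H_i=T_i-\tfrac{L}{c_p}(q_{ci}+q_{ri}),\qquad i=1,2.$$
Adding \eqref{eq.qv} and \eqref{eq.qr} makes $S_{ev}$ cancel out of the equation for $Q_i$, and combining \eqref{eq.T} with $\tfrac{L}{c_p}$ times \eqref{eq.qc}$+$\eqref{eq.qr} eliminates both $S_{cd}$ and $S_{ev}$ from the equation for $H_i$. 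Thus, among the four auxiliary unknowns $(Q_i,q_{ci},q_{ri},H_i)$, only the $q_r$-equation still carries the non-Lipschitz source; in every other equation the remaining sources $S_{cd},S_{cr},S_{ac}$ are polynomial in the solution components and hence Lipschitz on bounded sets, which is available thanks to the uniform bounds from Proposition~\ref{prop.comp}.

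Denote the differences $\tilde Q=Q_1-Q_2$, $\tilde q_c=q_{c1}-q_{c2}$, $\tilde q_r=q_{r1}-q_{r2}$, $\tilde H=H_1-H_2$, and test each difference equation with the corresponding difference. Because the two solutions share the same velocity field, the advective contribution $\mathbf{v}_h\cdot\nabla_h\tilde U+\omega\pa_p\tilde U$ vanishes after integration by parts on account of \eqref{inc.v}--\eqref{noflux.v}, exactly as in \eqref{est.trans.q}. The falling-rain term $V\pa_p(\tfrac{p}{\bar T}\tilde q_r)$ is controlled as in \eqref{est.trans.V}, producing a lower-order term plus a fraction of $\nu_{q_r}\|\pa_p\tilde q_r\|_w^2$ that is absorbed into the dissipation. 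The diffusion operators, integrated by parts against the Robin boundary conditions, yield the full dissipation $\mu\|\nabla_h\tilde q\|^2+\nu\|\pa_p\tilde q\|_w^2$ plus boundary contributions of the form $\int\alpha|\tilde q|^2\geq 0$ on $\Gamma_0\cup\Gamma_\ll$, since the prescribed boundary data cancel in the difference. The pressure-work term $\tfrac{R\tilde T}{c_p p}\omega$ in the $\tilde H$-equation is handled by Ladyzhenskaya-type interpolation combined with Young's inequality and the Prodi--Serrin regularity \eqref{reg.v}, yielding a coefficient in $L^1(0,\T)$.

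The crux is the $\tilde q_r$ estimate, where $S_{ev}$ reappears. Its contribution to the energy identity takes the form
$$-C_{ev}\int_\M\!\bigl[T_1q_{r1}^\beta(q_{vs}(p,T_1)-q_{v1})^+-T_2q_{r2}^\beta(q_{vs}(p,T_2)-q_{v2})^+\bigr]\tilde q_r\,d\M.$$
Telescoping the bracket one factor at a time, the piece proportional to $q_{r1}^\beta-q_{r2}^\beta$ carries the nonnegative prefactors $T_1\geq 0$ and $(q_{vs}(p,T_1)-q_{v1})^+\geq 0$, and by monotonicity of $x\mapsto x^\beta$ on $[0,\infty)$ satisfies $(q_{r1}^\beta-q_{r2}^\beta)\tilde q_r\geq 0$; hence this piece enters the estimate with the good sign and may simply be dropped. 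The remaining telescoped pieces all carry the uniformly bounded factor $q_{r2}^\beta\leq(q_r^*)^\beta$ and depend on $T_1-T_2$ and $q_{v1}-q_{v2}$ in a Lipschitz manner, the latter using the Lipschitz assumption \eqref{LC} on $q_{vs}(p,\cdot)$; Cauchy--Schwarz and Young then bound them by $C(\|\tilde q_r\|^2+\|\tilde T\|^2+\|\tilde q_v\|^2)$.

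Adding the four estimates with suitable positive weights $C_Q,C_H$ in front of $\|\tilde Q\|^2$ and $\|\tilde H\|^2$—chosen so that the dissipation of $\tilde q_c,\tilde q_r$ on the left absorbs the gradient cross terms generated on the right of the $\tilde Q$- and $\tilde H$-estimates—produces a scalar differential inequality
$$\frac{d}{dt}J(t)\leq \kappa(t)\,J(t),\qquad J(t):=\tfrac{1}{2C_Q}\|\tilde Q\|^2+\|\tilde q_c\|^2+\|\tilde q_r\|^2+\tfrac{1}{2C_H}\|\tilde H\|^2,$$
with $\kappa\in L^1(0,\T)$ depending on $\T$ and the $L^\infty$-bounds from Proposition~\ref{prop.comp}. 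Gronwall's inequality then yields the desired bound on $J$, and the linear identities $\tilde q_v=\tilde Q-\tilde q_r$, $\tilde T=\tilde H+\tfrac{L}{c_p}(\tilde q_c+\tilde q_r)$ transfer it back to the original unknowns $(\tilde T,\tilde q_v,\tilde q_c,\tilde q_r)$, completing the proof. The main obstacle is entirely located in the $\tilde q_r$-estimate: without the change of unknowns to confine $S_{ev}$ to a single equation, and without the monotonicity of $x^\beta$ to absorb the leading non-Lipschitz contribution, a direct Gronwall argument would fail whenever $\beta<1$.
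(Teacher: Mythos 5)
Your proposal follows essentially the same route as the paper's proof: the same change of unknowns $Q=q_v+q_r$ and $H=T-\frac{L}{c_p}(q_c+q_r)$ to confine the non-Lipschitz evaporation term to the $q_r$-equation, the same use of the monotonicity $(q_{r1}^\beta-q_{r2}^\beta)(q_{r1}-q_{r2})\geq0$ and of the Lipschitz assumption \eqref{LC}, the same interpolation treatment of the $\omega$-term in the $H$-equation, and the same weighted combination $J(t)$ closed by Gronwall. The only imprecision is your claim that the boundary contributions for $\tilde Q$ and $\tilde H$ reduce to nonnegative terms of the form $\int\alpha|\tilde q|^2$ because the data cancel in the difference: since the Robin coefficients of $q_v$ and $q_r$ (and likewise of $T$, $q_c$, $q_r$) may differ, cross terms such as $(\alpha_{\ell v}-\alpha_{\ell r})(q_{r1}-q_{r2})$ survive on the boundary; these are handled in the paper by Young's inequality with small $\varepsilon$ together with the trace estimate and are then absorbed into the dissipation, so the structure of your argument is unaffected.
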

\begin{proof}
The main difficulty in the proof of  the uniqueness and the continuous dependence on the initial data of the solutions is caused by the evaporation term $S_{ev}$ if the exponent $\beta \in (0,1)$. This problem can be circumvented by introducing the following new unknowns
\beq
Q=q_v+q_r, \qquad H=T-\frac{L}{c_p}(q_c+q_r)\,.\eeq
These quantities resemble the ones used in Hernandez-Duenas et al.\,\cite{HMSS}. However, in \cite{HMSS} the cloud water was not taken into account. In particular, $c_p H$ corresponds to the liquid water enthalpy, see e.g. Emanuel \cite{E}.
In the following we prove uniqueness by deriving typical $L^2$-estimates for the differences of the solutions in terms of estimates for the quantities
\beq\label{quant}
Q, q_c, q_r, H\,.
\eeq

We start with the estimate for $Q$, whose evolution is governed by the equation:
\begin{align}
\pa_t Q+\textbf{v}_h\cdot\nabla_h Q  +\omega \pa_p Q=& -V\pa_p\left(\frac{p}{\bar{T}}q_r\right)+S_{ac}+S_{cr}-S_{cd}+\mu_{q_v}\Delta_h Q +
(\mu_{q_r}-\mu_{q_v})\Delta_h q_r\nonumber\\
\label{Q}&+ \nu_{q_v}\pa_p\Big(\Big(\frac{gp}{R\bar{T}}\Big)^2\pa_p Q\Big)+
(\nu_{q_r}-\nu_{q_v})\pa_p\Big(\Big(\frac{gp}{R\bar{T}}\Big)^2\pa_p q_r\Big)\,.
\end{align}
Let $Q_1,Q_2$ be two solutions of \eqref{Q}. We multiply the equation for the difference
$(Q_1-Q_2)$ by $(Q_1-Q_2)$, and integrate over $\mathcal M$.
Recalling \eqref{inc.v}--(\ref{noflux.v}), it follows from integration by parts and using the relevant boundary conditions, that
\begin{align}
\int_\mathcal M(\partial_t(Q_1-Q_2)+\textbf{v}_h\cdot\nabla_h (Q_1-Q_2)  +\omega \pa_p (Q_1-Q_2))(Q_1-Q_2) d\mathcal M =\frac12\frac{d}{dt}\|Q_1-Q_2\|^2.
\end{align}
By the boundary conditions (\ref{bound.0})--(\ref{bound.ll}), one can check that
\begin{eqnarray}
  \label{bound.1}&\mbox{on }\Gamma_{\ll}: &\partial_n(Q_1-Q_2)+\alpha_{\ll v}(Q_1-Q_2)=(\alpha_{\ll0}-\alpha_{\ll r})(q_{r1}-q_{r2}), \\
  &\mbox{on }\Gamma_0: &\partial_p(Q_1-Q_2)+\alpha_{0v}(Q_1-Q_2)=(\alpha_{0v}- \alpha_{0r})(q_{r1}-q_{r2}), \\
  &\mbox{on }\Gamma_1: &\partial_p(Q_1-Q_2)=\partial_p(q_{r1}-q_{r2})=0,\\
  \label{bound.2}&\mbox{on }\Gamma_{\ll}: &\partial_n(q_{r1}-q_{r2})=\alpha_{\ll r}(q_{r2}-q_{r1}),\quad\mbox{on }\Gamma_0:\ \ \partial_p(q_{r1}-q_{r2})=\alpha_{0r}(q_{r2}-q_{r1}).\qquad
\end{eqnarray}
For shortening the expressions, we use hereafter the
notation
\beq &&\text{Diff}\,(Q,q_r) := \\
&&\quad \mu_{q_v}\Delta_h Q +
(\mu_{q_r}-\mu_{q_v})\Delta_h q_r+ \nu_{q_v}\pa_p\Big(\Big(\frac{gp}{R\bar{T}}\Big)^2\pa_p Q\Big)+
(\nu_{q_r}-\nu_{q_v})\pa_p\Big(\Big(\frac{gp}{R\bar{T}}\Big)^2\pa_p q_r\Big)\nonumber
\eeq
for the diffusion terms in equation \eqref{Q} for $Q$.
It then follows from integration by parts, using the boundary conditions \eqref{bound.1}-\eqref{bound.2}, that
\beq
&&\int_\mathcal M(\text{Diff}(Q_1,q_{r1})-\text{Diff}(Q_2,q_{r2}))(Q_1-Q_2)d\mathcal M\nonumber\\
&=&\mu_{q_v}\int_{ \Gamma_\ll} (Q_1-Q_2)\pa_n (Q_1-Q_2) d\Gamma_\ll - \mu_{q_v}\|\nabla_h (Q_1-Q_2)\|^2\nonumber\\
&&+\nu_{q_v}\int_{\M'}\left(\frac{g p}{R\bar T}\right)^2\pa_p (Q_1-Q_2) (Q_1-Q_2) d\M' \bigg|_{p_1}^{p_0} - \nu_{q_v}\|\pa_p(Q_1-Q_2)\|_w^2\nonumber \\
&&+(\mu_{q_r}-\mu_{q_v})\int_{\Gamma_\ll}\partial_n (q_{r1}-q_{r2})(Q_1-Q_2)d\Gamma_\ll  \nonumber\\
&&+(\nu_{q_r}-\nu_{q_v})\int_{\M'}\left(\frac{g p}{R\bar T}\right)^2 \partial_p(q_{r1}-q_{r2})(Q_1-Q_2)d\M'\bigg|_{p_1}^{p_0}\nonumber\\
&&-(\mu_{q_r}-\mu_{q_v})\int_{\M}
\nabla_h(q_{r1}-q_{r2})\cdot\nabla_h(Q_1-Q_2)d\M \nonumber\\
&&- (\nu_{q_r}-\nu_{q_v}) \int_\M\left(\frac{gp}{R\bar T}\right)^2\pa_p(q_{r1}-q_{r2})\pa_p (Q_1-Q_2)d\M,
\eeq
Applying Young's inequality to the integrals over $\mathcal M$ and using the boundary conditions \eqref{bound.1}-\eqref{bound.2} to the
boundary integrals, one obtains, after some manipulations,
\beq
&&\int_\mathcal M(\text{Diff}(Q_1,q_{r1})-\text{Diff}(Q_2,q_{r2}))(Q_1-Q_2)d\mathcal M\nonumber\\
&\leq&- \frac{\mu_{q_v}}{2}\|\nabla_h (Q_1-Q_2)\|^2 + \frac{(\mu_{q_r}-\mu_{q_v})^2}{2\mu_{q_v}}\|\nabla_h (q_{r1}-q_{r2})\|^2 - \frac{\nu_{q_v}}{2}\|\pa_p (Q_1-Q_2)\|_w^2 \nonumber  \\
&&+ \frac{(\nu_{q_r}-\nu_{q_v})^2}{2\nu_{q_v}}\|\pa_p (q_{r1}-q_{r2})\|_w^2- \mu_ {q_v}\int_{\Gamma_\ll}\alpha_{\ll v}(Q_1-Q_2)^2  d\Gamma_\ll   \nonumber\\
&&+\int_{\Gamma_\ll}(\mu_ {q_r}\alpha_{\ll r} - \mu_ {q_v} \alpha_{\ll v})(q_{r2}-q_{r1})(Q_1-Q_2) d\Gamma_\ll \nonumber\\
&&-\nu_{q_v}\int_{\M'}\left(\frac{g p}{R\bar T}\right)^2 \alpha_{0v}(Q_1-Q_2)^2 d\M'\bigg|_{p=p_0} \nonumber\\
&&+\int_{\M'}(\nu_{q_r}\alpha_{0r}-\nu_{q_v}\alpha_{0v}) (q_{r2}-q_{r1}\big)(Q_1-Q_2)d\M'\bigg|_{p=p_0} \,.
\eeq
Since we do not want to make any restrictions on the boundary data
(i.e.,we also want to allow that, e.g., $\alpha_{0r}=0$, but
$\alpha_{0v}>0$), we apply Young's inequality with $\e>0$ sufficiently
small to the boundary integrals to finally estimate
them using the boundedness of the trace map  $H^1(\mathcal M)\mapsto
H^{\frac12}(\partial\mathcal M)\hookrightarrow L^2(\partial\mathcal M)$, see, e.g., \cite{D}, as follows
\beq
&&\int_{\Gamma_\ll}(\mu_ {q_r}\alpha_{\ll r} - \mu_ {q_v} \alpha_{\ll v})(q_{r2}-q_{r1})(Q_1-Q_2) d\Gamma_\ll \nonumber\\
&&+\int_{\M'}(\nu_{q_r}\alpha_{0r}-\nu_{q_v} \alpha_{0v})(q_{r2}-q_{r1}\big)(Q_1-Q_2)d\M' \bigg|_{p=p_0}\nonumber\\
&\leq&\e \int_{\pa\M}(Q_1-Q_2)^2d(\pa\M)+ C(\e)\int_{\pa\M}(q_{r1}-q_{r2})^2 d(\pa\M) \nonumber\\
&\leq &\frac{\mu_{q_v}}{4}\|\nabla_h (Q_1-Q_2)\|^2 + \frac{\nu_{q_v}}{4}\|\pa_p (Q_1-Q_2)\|_w^2 + C(\|Q_{1}-Q_{2}\|^2 + \|q_{r1}-q_{r2}\|^2)\nonumber\\
&&+ C_Q( \mu_{q_r}\|\nabla_h(q_{r1}-q_{r2})\|^2 + \nu_{q_r}\|\pa_p(q_{r1}-q_{r2})\|_w^2)\,.
\eeq
For the $L^2$-estimate we therefore obtain
\beq
&&\frac12\frac{d}{dt}\|Q_1-Q_2\|^2+\frac{\mu_{q_v}}{4} \|\nabla_h(Q_1-Q_2)\|^2 +\frac{\nu_{q_v}}{4} \|\pa_p(Q_1-Q_2)\|_w^2 \nonumber \\
&\leq&C_Q( \mu_{q_r}\|q_{r1}-q_{r2}\|^2 + \nu_{q_r}\|\pa_p(q_{r1}-q_{r2}\|_w^2) + C(\|Q_{1}-Q_{2}\|^2 + \|q_{r1}-q_{r2}\|^2)\nonumber\\
&&- V\int_\M\bigg((q_{r1}-q_{r2})\pa_p\left(\frac{p}{\overline{T}}\right) +\frac{p}{\overline{T}} \pa_p(q_{r1}-q_{r2})\bigg)(Q_1-Q_2) d\M\nonumber\\
&&+ C_{ac}\int_\M
((q_{c1}-q_{ac}^*)^+-(q_{c2}-q_{ac}^*)^+)(Q_1-Q_2) d\M \nonumber\\
&&+C_{cr}\int_\M
\big((q_{c1}-q_{c2})q_{r1} +q_{c2}(q_{r1}-q_{r2})\big)(Q_1-Q_2) d\M\nonumber \\
&&-C_{cd}\int_\M ((q_{v1}-q_{v2})q_{c1}+q_{v2}(q_{c1}-q_{c2}))(Q_1-Q_2)d\M\nonumber\\
&& +C_{cd}\int_\M(q_{vs}(T_1)(q_{c1}-q_{c2})+q_{c2} (q_{vs}(p,T_1)-q_{vs}(p,T_2))(Q_1-Q_2) d\M\nonumber\\
&&- C_{cn}\int_\M
((q_{v1}-q_{vs}(p,T_1))^+-(q_{v2}-q_{vs}(p,T_2))^+)(Q_1-Q_2)  d\M \,.\label{4.13-li}
\eeq
The
Lipschitz continuity property of the saturation mixing ratio \eqref{LC} implies
\begin{eqnarray}
&&|(q_{v1}-q_{vs}(p,T_1))^+-(q_{v2}-q_{vs}(p,T_2))^+|\nonumber\\
&&\quad\leq |(q_{v1}-q_{vs}(p,T_1))^+-(q_{v2}-q_{vs}(p,T_1))^+| +|(q_{v2}-q_{vs}(p,T_1))^+-(q_{v2}-q_{vs}(p,T_2))^+|\nonumber\\
&&\quad\leq |q_{v1}-q_{v2}| + C|T_1-T_2|\,.\label{06202}
\end{eqnarray}
Using additionally the uniform boundedness of all moisture quantities as well as Young's inequality and rewriting $q_v$ and $T$ in terms
of the quantities in \eqref{quant}, we obtain:
\beq
&&\frac12\frac{d}{dt}\|Q_1-Q_2\|^2+ \frac{\mu_{q_v}}{4}\|\nabla_h (Q_1-Q_2)\|^2 +\frac{\nu_{q_{v}}}{4}\|\pa_p(Q_1-Q_2)\|_w^2\nonumber \\
&\leq& C \left(\|Q_1-Q_2\|^2+\|q_{r1}-q_{r2}\|^2+\|q_{c1}-q_{c2}\|^2+ \|H_1-H_2\|^2\right)\nonumber\\
\label{est.Q.uni}&& +C_Q(\mu_{q_r}\|\nabla_h (q_{r1}-q_{r2})\|^2 + \nu_{q_r} \|\pa_p (q_{r1}-q_{r2})\|_w^2).\qquad
\eeq

We next estimate the difference of the two rain water mixing ratios. We thereby bound the diffusion and the vertical transport term with terminal velocity $V$ as follows:
\beq
&&\int_\M(q_{r1}-q_{r2}) {\cal D}^{q_r}(q_{r1}-q_{r2})d\M -V \int_\M (q_{r1}-q_{r2}) \pa_p \left(\frac{p}{\bar{T}}(q_{r1}-q_{r2})\right)d\M\nonumber\\
&&\quad = -\mu_{q_r}\int_{\Gamma_\ll} \alpha_{\ll r}(q_{r1}-q_{r2})^2d\Gamma_\ll-\mu_{q_r}\|\nabla_h(q_{r_1}-q_{r_2})\|^2 \nonumber\\
&&\qquad -\nu_{q_r}\int_{\M'}\left(\frac{gp}{R\bar{T}}\right)^2 \alpha_{0r}(q_{r1}-q_{r2})^2d\M'\Big|_{p=p_0}-\nu_{q_r}\|\pa_p(q_{r1}-q_{r2})\|_w^2\nonumber\\
&&\qquad - V\int_\M \bigg[(q_{r1}-q_{r2})^2 \pa_p \left(\frac{p}{\overline{T}}\right) +\frac{p}{\overline{T}}(q_{r1}-q_{r2})\pa_p(q_{r1}-q_{r2})\bigg]d\M \nonumber\\
&&\quad \leq  -\mu_{q_r}\|\nabla_h(q_{r_1}-q_{r_2})\|^2 -\frac{\nu_{q_r}}{2}\|\pa_p(q_{r1}-q_{r2})\|_w^2 + C\|q_{r_1}-q_{r2}\|^2\,,
\eeq
and we get  for the $L^2$-estimate of $q_{r1}-q_{r2}$:
\beq
&&\frac12\frac{d}{dt}\|q_{r 1}-q_{r 2}\|^2+\mu_{q_r}\|\nabla_h (q_{r1}-q_{r2})\|^2 +\frac{\nu_{q_r}}{2}\|\pa_p(q_{r1}-q_{r2})\|_w^2 \nonumber\\
&\leq&C\|q_{r_1}-q_{r2}\|^2
+ C_{ac}\int_\M
((q_{c1}-q_{ac}^*)^+-(q_{c2}-q_{ac}^*)^+)(q_{r1}-q_{r2}) d\M \nonumber\\
&& +C_{cr}\int_\M\left[
(q_{c1}-q_{c2})q_{r1}(q_{r1}-q_{r2}) +q_{c2}(q_{r1}-q_{r2})^2\right] d\M \nonumber\\
&& -C_{ev} \int_\M (T_1-T_2)q_{r1}^\beta(q_{vs}(p,T_1)-q_{v1})^+ (q_{r1}-q_{r2}) d\M\nonumber\\
&&-C_{ev} \int_\M T_2(q_{r1}^\beta-q_{r2}^\beta)(q_{vs}(p,T_1)-q_{v1})^+ (q_{r1}-q_{r2})d\M\nonumber\\
&&-C_{ev} \int_\M T_2 q_{r2}^\beta((q_{vs}(p,T_1)-q_{v1})^+-(q_{vs}(p,T_2)
-q_{v2})^+)(q_{r1}-q_{r2}) d\M\,.\label{4.17-li}
\eeq
We can now use the monotonicity property in the evaporation term $(q_{r1}^\beta-q_{r2}^\beta)(q_{r1}-q_{r2})\geq 0$ and recall (\ref{06202}) to estimate further replacing $T$ in terms of $H,q_c$ and $q_r$
\beq
&&\frac12\frac{d}{dt}\|q_{r 1}-q_{r 2}\|^2 +  \mu_{q_r}\|\nabla_h (q_{r1}-q_{r2})\|^2 + \frac{\nu_{q_r}}{2}\|\pa_p(q_{r1}-q_{r2})\|_w^2\nonumber \\
&&\label{est.qr.uni}\quad \leq C \left(\|Q_1-Q_2\|^2+\|q_{r1}-q_{r2}\|^2+\|q_{c1}-q_{c2}\|^2+ \|H_1-H_2\|^2\right).
\eeq

We now turn to the difference of the cloud water mixing ratios. We estimate the boundary terms arising from partial integration of the diffusion operator as above, using (\ref{06202}) and the Lipschitz continuity of $q_{vs}$, (\ref{LC}), and obtain
\beq
&&\frac12\frac{d}{dt}\|q_{c1}-q_{c2}\|^2 + \mu_{q_c}\|\nabla_h (q_{c1}-q_{c2})\|^2 + \nu_{q_c}\|\pa_p(q_{c1}-q_{c2})\|_w^2\nonumber\\
&\leq&  C_{cd} \int_\M (q_{v1}-q_{v2}-(q_{vs}(p,T_1)-q_{vs}(p,T_2))q_{c1}(q_{c1}-q_{c2})d\M \nonumber\\
&&+C_{cd} \int_\M (q_{v2} -q_{vs}(p,T_2))(q_{c1}-q_{c2}) (q_{c1}-q_{c2})d\M \nonumber\\
&& +C_{cn}\left((q_{v1}-q_{vs}(p,T_1))^+-(q_{v2}-q_{vs}(p,T_2))^+\right) (q_{c1}-q_{c2}) d\M\nonumber\\
&& - C_{ac}\int_\M
((q_{c1}-q_{ac}^*)^+-(q_{c2}-q_{ac}^*)^+)(q_{c1}-q_{c2}) d\M\nonumber\\
&& - C_{cr}\int_\M
\left[(q_{c1}-q_{c2})^2q_{r1}+q_{c2}(q_{r1}-q_{r2})(q_{c1}-q_{c2})\right] d\M\nonumber \\
\label{est.qc.uni}
&\leq& C \left(\|Q_1-Q_2\|^2+\|q_{r1}-q_{r2}\|^2+\|q_{c1}-q_{c2}\|^2+\|H_1-H_2\|^2\right).
\eeq

To close the estimates it remains to bound $(H_1-H_2)$. The equation for $H$ reads
\beq
&&\pa_t H+\textbf{v}_h\cdot\nabla_h H+\omega \pa_p H=\nonumber\\
&&=\frac{RT}{c_pp}\omega -\frac{L}{c_p}V\pa_p\Big(\frac{p}{\bar T}
q_r\Big)+\mu_T \Delta_h H + \nu_T \pa_p\Big(\Big(\frac{g p}{R\bar{T}}\Big)^2\pa_p H\Big)\nonumber \\
&&\quad -\frac{L}{c_p}\left((\mu_{q_c}-\mu_T)\Delta_h q_{c}+(\nu_{q_c}-\nu_T)\pa_p\Big(\Big(\frac{g p}{R\bar{T}}\Big)^2\pa_p q_c\Big)\right)\nonumber \\
&&\quad  -\frac{L}{c_p}\left(
(\mu_{q_r}-\mu_T)\Delta_h q_r+(\nu_{q_r}-\nu_T)\pa_p\Big(\Big(\frac{g p}{R\bar{T}}\Big)^2\pa_p q_r\Big)\right)\,.
\eeq
As it can be seen here from the thermodynamic equation the antidissipative term containing the vertical velocity $\omega$ appears on the right hand side. In previous estimates we could circumvent estimations involving this term by switching to the potential temperature $\theta$. However, this is not possible here, since for an alternative definition of the form $\widetilde H=\theta - \frac{L}{c_p}\big(\frac{p_0}{p}\big)^\frac{R}{c_p}(q_c+q_r)$, which would also allow for a cancellation of the source terms from phase changes, the vertical velocity term would appear again from the vertical advection term due to the pressure function multiplying the moisture quantities. We therefore stick to the previously introduced quantity $H$.

Testing again the equation for the difference $(H_1-H_2)$ against $(H_1-H_2)$ and treating the boundary terms arising from the diffusion operators and the additional vertical transport term in a similar way as for $(Q_1-Q_2)$  above, we obtain:
\begin{align}
\frac12&\frac{d}{dt}\|H_1-H_2\|^2\leq -\mu_T\|\nabla_h(H_1-H_2)\|^2 -\nu_T\|\pa_p(H_1-H_2)\|_w^2 \nonumber\\
&+ C(\|q_{c1}-q_{c2}\|^2_{L^2(\pa \M)}+\|q_{r1}-q_{r2}\|^2_{L^2(\pa \M)}) + \e \|H_{1}-H_{2}\|^2_{L^2(\pa \M)}  \nonumber\\
&- \frac{L}{c_p}\int_\mathcal M \nabla_h(H_1-H_2)\cdot\big((\mu_{q_c}-\mu_T)\nabla_h(q_{c1}-q_{c2}) +
 (\mu_{q_r}-\mu_T)\nabla_h(q_{r1}-q_{r2})\big)d\M \nonumber\\
&- \frac{L}{c_p}\int_\mathcal M \left(\frac{g p}{R\bar T}\right)^2\pa_p(H_1-H_2)\cdot
\big((\nu_{q_c}-\nu_T)\pa_p(q_{c1}-q_{c2}) + (\nu_{q_r}-\nu_T)\pa_p(q_{r1}-q_{r2})\big)d\M\nonumber\\
&- \frac{L}{c_p}\int_\mathcal M\left[ V\frac{p}{\overline{T}} (q_{r1}-q_{r2})\pa_p(H_1-H_2) + \frac{R\omega}{c_p p } (T_1-T_2)(H_1-H_2)\right] d\M
\nonumber\\
\leq& -\frac{\mu_T}{2}\|\nabla_h(H_1-H_2)\|^2 -\frac{\nu_T}{2}\|\pa_p(H_1-H_2)\|_w^2 + C_H \mu_{q_c}\|\nabla_h (q_{c1}-q_{c2})\|^2\nonumber\\
&+C_H( \mu_{q_r} \|\nabla_h (q_{r1}-q_{r2})\|^2 + \nu_{q_c}\|\pa_p (q_{c1}-q_{c2})\|_w^2+\nu_{q_r}\|\pa_p (q_{r1}-q_{r2})\|_w^2)\nonumber\\
\label{est.H.uni}
&+ C \left(\|H_1-H_2\|^2+\|q_{r1}-q_{r2}\|^2+\|q_{c1}-q_{c2}\|^2\right)\,,
\end{align}
where $\varepsilon$ is chosen sufficiently small; we have also used here  the boundedness of the trace map $H^1(\mathcal M)\mapsto L^2(\partial\mathcal M)$ and Young's inequality. Moreover, since according to \eqref{reg.v} we have $\omega\in L^\infty(0,\T;L^2(\M))$, we have also used above the following bound for the integral containing the vertical velocity component $\omega$:
\begin{align}
&\left|\int_\M\frac{R\omega}{c_p p } (T_1-T_2)(H_1-H_2) d\M\right|\nonumber\\
\leq& C\Big(\|H_1-H_2\|_{L^4(\M)}^2+\sum_{j\in\{c,r\}} \|q_{j1}-q_{j2}\|_{L^4(\M)}^2\Big)\nonumber\\
\leq& C\bigg(\|H_1-H_2\|_{H^1(\mathcal M)}^{\frac32}\|H_1-H_2\|^{\frac12}+ \sum_{j\in\{c,r\}} \|q_{j1}-q_{j2}\|_{H^1(\M)}^{\frac32}\|q_{j1}-q_{j2}\|^{\frac12}\bigg)\nonumber\\
\leq& \varepsilon\bigg(\|\nabla(H_1-H_2)\|^2+\sum_{j\in\{c,r\}}  \|\nabla(q_{j1}-q_{j2})\|^2\bigg)\nonumber\\
&+C_\varepsilon\bigg(\|H_1-H_2\|^2+\sum_{j\in\{c,r\}}  \|q_{j1}-q_{j2}\|^2\bigg).
\end{align}

We now combine the estimates \eqref{est.Q.uni}, \eqref{est.qr.uni}, \eqref{est.qc.uni} and \eqref{est.H.uni} by introducing
\beq
J(t)=\frac12\big(A \|Q_1-Q_2\|^2 + \|q_{c1}-q_{c2}\|^2+\|q_{r1}-q_{r2}\|^2+ B \|H_1-H_2\|^2\big)\,,
\eeq
where $A$ and $B$ are positive constants  chosen accordingly (e.g., \,$A= \frac{1}{2C_Q}$ and $ B=\frac{1}{2C_H}$), such that for some $C_0>0$
\beq
\frac{d J }{dt}\leq C_0 J\,
\eeq
and we conclude the proof by applying the Gronwall inequality. \end{proof}

We  are now ready to prove our main result, Theorem \ref{thm}.

\begin{proof}[\textbf{Proof of Theorem \ref{thm}}] The uniqueness and continuous dependence on the initial data is
an immediate corollary of Proposition \ref{prop.uni}. Therefore, it remains to
prove the global existence. By Proposition \ref{prop.local}, under the assumptions in Theorem \ref{thm}, there is a unique local strong solution $(q_v, q_c, q_r, \theta)$, with
\beq
(q_v, q_c, q_r, \theta)\in C([0,\mathcal T_0]; H^1(\mathcal M))\cap L^2(0,\mathcal T_0; H^2(\mathcal M)).
\eeq
We extend the unique strong solution $(q_v, q_c, q_r, \theta)$ to the maximal
time of existence $\mathcal T_*$. If $\mathcal T_*=\infty$, then one obtains
a global strong solution. Suppose that $\mathcal T_*<\infty$, then one obviously has
\begin{equation}\label{5.30.0}
\lim_{\mathcal T\rightarrow\mathcal T_*^-}\|(q_v, q_c, q_r, \theta)\|_{L^\infty(0,\mathcal T; H^1(\mathcal M))\cap L^2(0,\mathcal T, H^2(\mathcal M))}=\infty.
\end{equation}

By Proposition \ref{prop.comp}, we have
$\|(q_v, q_c, q_r, \theta)\|_{L^\infty(\mathcal M\times(0,\mathcal T_*))}
\leq C$, for a positive constant $C$ that depends continuously on $\mathcal T_*$. Let $\varepsilon$ be a small enough
positive time to be specified later. We will estimate the norms to the solution in the time
interval $(\mathcal T_*-\varepsilon, \mathcal T)$, for $\mathcal T\in(\mathcal T_*-\varepsilon,\mathcal T_*)$.
For simplicity, we denote $\mathbf{U}=(q_v, q_c, q_r, \theta)$. Applying Corollary
\ref{corexistparabolic} in section \ref{secellipticparabolic}, below, to
system (\ref{eq.qv})--(\ref{eq.qr}) with (\ref{eq.theta}), and
recalling that $\|(q_v, q_c, q_r, \theta)\|_{L^\infty(\mathcal M\times(0,\mathcal T_*))}\leq C$, we have
\begin{eqnarray}
  &&\|\mathbf{U}\|_{L^\infty(\mathcal T_*-\varepsilon, \mathcal T; H^1(\mathcal M))\cap L^2(\mathcal T_*-\varepsilon, \mathcal T; H^2(\mathcal M))}\label{5.30.2}\\
  &\leq& C(1+\|\textbf{v}_h\cdot\nabla_h \mathbf{U}\|_{L^2(\mathcal M\times(\mathcal T_*-\varepsilon, \mathcal T))}+\|\omega \pa_p \mathbf{U}\|_{L^2(\mathcal M\times(\mathcal T_*-\varepsilon, \mathcal T))}+\|\partial_p \mathbf{U}\|_{L^2(\mathcal M\times(\mathcal T_*-\varepsilon, \mathcal T))}),\nonumber
\end{eqnarray}
for a positive constant $C$ independent of $\mathcal T<\mathcal T_*$.

The
same argument as for (\ref{5.30.1}) yields
\begin{eqnarray}
\|\textbf{v}_h\cdot\nabla_h  \mathbf{U}\|_{L^2(\mathcal M\times(\mathcal T_*-\varepsilon, \mathcal T))}&\leq& C(\mathcal T-\mathcal T_*+\varepsilon)^{\frac{\sigma-3}{2\sigma}-\frac1r}\|\textbf{v}_h\|_{L^r(\mathcal T_*-\varepsilon,\mathcal T; L^\sigma(\mathcal M))}\nonumber \\
&&\cdot\| \mathbf{U}\|_{L^\infty(\mathcal T_*-\varepsilon,\mathcal T; H^1(\mathcal M))\cap L^2(\mathcal T_*-\varepsilon,\mathcal T; H^2(\mathcal M))}\nonumber\\
&\leq&C\varepsilon^{\frac{\sigma-3}{2\sigma}-\frac1r}\| \mathbf{U}\|_{L^\infty(\mathcal T_*-\varepsilon,\mathcal T; H^1(\mathcal M))\cap L^2(\mathcal T_*-\varepsilon,\mathcal T; H^2(\mathcal M))},\qquad
\end{eqnarray}
for a positive constant $C$ independent of $\mathcal T<\mathcal T_*$, with an analogous estimate for $\omega\pa_p  \mathbf{U}$. By the
H\"older inequality one moreover gets
\begin{eqnarray}
\|\partial_p  \mathbf{U}\|_{L^2(\mathcal M\times(\mathcal T_*-\varepsilon, \mathcal T))}
&\leq& C(\mathcal T-\mathcal T_*+\varepsilon)^{\frac12}\| \mathbf{U}\|_{L^\infty(\mathcal T-\mathcal T_*+\varepsilon,\mathcal T; H^1(\mathcal M))}\nonumber\\
&\leq&C\varepsilon^{\frac12}\| \mathbf{U}\|_{L^\infty(\mathcal T-\mathcal T_*+\varepsilon,\mathcal T; H^1(\mathcal M))},
\end{eqnarray}
for a positive constant $C$ independent of $\mathcal T<\mathcal T_*$.

Plugging the above two estimates into (\ref{5.30.2}), and choosing $\varepsilon$ small enough, one concludes
\beq
\|\mathbf{U}\|_{L^\infty(\mathcal T_*-\varepsilon, \mathcal T; H^1(\mathcal M))\cap L^2(\mathcal T_*-\varepsilon, \mathcal T; H^2(\mathcal M))}\leq C,
\eeq
for a positive constant $C$ independent of $\mathcal T<\mathcal T_*$, which  contradicts (\ref{5.30.0}). Therefore, we must have $\mathcal T_*=\infty$,
in other words, the solution $(q_v, q_c, q_r, \theta)(t)$ can be extended to be a global
solution.
\end{proof}

\section{Elliptic and parabolic problems on cylindrical domains}
\label{secellipticparabolic}
As mentioned in the introduction, this technical section is devoted to establishing the existence and uniqueness of solutions (weak or strong) to
some elliptic or parabolic problems, subject to the Robin boundary
conditions on the cylindrical domains.
Since the results stated in this section hold for any finite dimensional
space, and in order to state and prove the results in the general settings,
the notations used in this section are independent of those used in
the previous sections.

Let $M\subseteq\mathbb R^N$ be a bounded domain with smooth boundary,
$L_0$ and $L_1$ two numbers, with $L_0<L_1$, and set the cylindrical domain
$\Omega=M\times(L_0, L_1)$. For a spatial variable $x\in\Omega$, we denote by
$x'\in M$ the first $N$ components of $x$, that is $x'=(x^1, \cdots, x^N)$,
and by $z$ the last component of $x$, that is $z=x^{N+1}$. We divide the
boundary into the lateral boundary $\Gamma_\ll$, the
upper boundary $\Gamma_1$ and the lower boundary $\Gamma_0$, that is $\partial\Omega=\Gamma_\ll\cup\Gamma_1\cup\Gamma_0$, where
\begin{eqnarray}
  \Gamma_\ll=\partial M\times[L_0, L_1], \quad\Gamma_0=M\times\{L_0\}, \quad\Gamma_1=M\times\{L_1\},
\end{eqnarray}
and set $\Gamma_{01}=\Gamma_0\cup\Gamma_1$. Note that the unit outward normal
vector field on the boundary $\partial\Omega$ is piecewise smooth. In view of this, we use different notations to distinguish
the normal vectors on different portions of the boundary: we
denote by $n$ the unit outward normal vector on $\Gamma_\ll$, while the unit outward normal vector
on $\Gamma_{01}$ is denoted by $\nu$.

\subsection{Linear elliptic problem}
In this subsection we establish the existence, uniqueness and regularity of weak solutions to  the following elliptic problem:
\begin{equation}
  \label{ELPROB}
  \left\{
  \begin{array}{lr}
  -\Delta_h u-\partial_z(a\partial_zu)+bu=f, &\mbox{in }\Omega,\\
  \partial_nu+\alpha u=\varphi,&\mbox{on }\Gamma_\ll,\\
  \partial_\nu u+\beta u=\psi,&\mbox{on }\Gamma_{01},
  \end{array}
  \right.
\end{equation}
where $a, b, \alpha, \beta, f, \varphi$ and $\psi$ are given functions. As above  $\nabla_h$ and $\Delta_h$ denote the gradient and Laplacian in the horizontal coordinates $(x^1,\dots,x^N)$.

Weak solutions to (\ref{ELPROB}) are defined as follows.

\begin{df}\label{defel}
A function $u\in H^1(\Omega)$ is called a weak solution to the elliptic
problem (\ref{ELPROB}), if the following equation holds
\begin{eqnarray}
  \int_\Omega( \nabla_hu\cdot \nabla_h\phi+a\partial_zu\partial_z\phi+bu\phi)dx
  +\int_{\Gamma_\ll}\alpha u\phi d\Gamma_\ll+\int_{\Gamma_{01}}a\beta u\phi dx\nonumber'\\
  =\int_\Omega f\phi dx+\int_{\Gamma_\ll}\varphi\phi d\Gamma_\ll+\int_{\Gamma_{01}}a\psi\phi dx'
\end{eqnarray}
holds for any $\phi\in H^1(\Omega)$. If $b\equiv\alpha\equiv\beta\equiv0$,
we ask for the additional constraint $\int_\Omega udx=0$ on $u$,
and assume that the following compatibility condition holds
\beq
\int_\Omega f dx+\int_{\Gamma_\ll}\varphi d\Gamma_\ll+\int_{\Gamma_{01}}a\psi dx'=0.
\eeq
\end{df}

Existence and uniqueness of weak solutions to (\ref{ELPROB}) is stated in the following proposition, which can be proven in the standard way
by the Lax-Milgram Lemma \cite{LM}.

\begin{prop}[Existence and uniqueness of weak solutions]\label{prop.weak.elliptic}
Assume that the functions $a, b, \alpha, \beta, f, \varphi$ and $\psi$ satisfy \begin{eqnarray}
  &a, b\in L^\infty(\Omega), \quad \alpha\in L^\infty(\Gamma_\ll),\quad\beta\in L^\infty(\Gamma_{01}), \quad
  \lambda\leq a\leq\Lambda, \quad 0\leq b, \alpha,\beta \leq\Lambda,\\
  &f\in L^2(\Omega),\quad\varphi\in L^2(\Gamma_\ll),\quad\psi\in L^2(\Gamma_{01}),
\end{eqnarray}
for some positive numbers $\lambda$ and $\Lambda$. Then, there is
a unique weak solution $u$ to (\ref{ELPROB}), satisfying
\beq
\|u\|_{H^1(\Omega)}\leq C(\|f\|_{L^2(\Omega)}+\|\varphi\|_{L^2(\Gamma_\ll)} +\|\psi\|_{L^2(\Gamma_{01})}),
\eeq
for a positive constant $C$ depending only on $\Omega$ and the coefficients $a, b, \alpha$ and $\beta$.
\end{prop}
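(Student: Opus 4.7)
The plan is a direct application of the Lax--Milgram Lemma to the bilinear form and linear functional suggested by the weak formulation in Definition \ref{defel}. Introduce
\begin{align*}
B(u,\phi) &:= \int_\Omega\bigl(\nabla_h u\cdot\nabla_h\phi + a\,\partial_z u\,\partial_z\phi + b u\phi\bigr)dx + \int_{\Gamma_\ll}\alpha u\phi\,d\Gamma_\ll + \int_{\Gamma_{01}}a\beta u\phi\,dx',\\
F(\phi) &:= \int_\Omega f\phi\,dx + \int_{\Gamma_\ll}\varphi\phi\,d\Gamma_\ll + \int_{\Gamma_{01}}a\psi\phi\,dx'.
\end{align*}
I would work on the Hilbert space $V := H^1(\Omega)$ in the generic case, and on the closed subspace $V_0 := \{u\in H^1(\Omega):\int_\Omega u\,dx=0\}$ (with the $H^1$-norm) in the degenerate case $b\equiv\alpha\equiv\beta\equiv 0$, so that the zero-mean constraint built into Definition \ref{defel} is incorporated at the level of the ambient space.

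Continuity is the routine step. The volume terms are handled by Cauchy--Schwarz together with the $L^\infty$ bounds on $a$ and $b$, while the boundary terms are dominated via the bounded trace map $H^1(\Omega)\to L^2(\partial\Omega)$; since $\Omega=M\times(L_0,L_1)$ is a bounded Lipschitz cylinder, the standard trace theory applies (piecewise smoothness of $\partial\Omega$ is enough). This yields $|B(u,\phi)|\leq C\|u\|_{H^1}\|\phi\|_{H^1}$ and $|F(\phi)|\leq C(\|f\|_{L^2(\Omega)}+\|\varphi\|_{L^2(\Gamma_\ll)}+\|\psi\|_{L^2(\Gamma_{01})})\|\phi\|_{H^1}$.

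The main obstacle is coercivity, because all three nonnegative coefficients $b,\alpha,\beta$ are allowed to vanish. The ellipticity bound $a\geq\lambda$ gives the immediate control
\[
B(u,u)\geq \min(1,\lambda)\|\nabla u\|_{L^2(\Omega)}^2 + \int_\Omega b u^2\,dx + \int_{\Gamma_\ll}\alpha u^2\,d\Gamma_\ll + \int_{\Gamma_{01}}a\beta u^2\,dx',
\]
so the task reduces to bounding $\|u\|_{L^2(\Omega)}$. In the degenerate case $b\equiv\alpha\equiv\beta\equiv 0$, the Poincar\'e--Wirtinger inequality on the bounded Lipschitz domain $\Omega$ furnishes $\|u\|_{L^2}\leq C\|\nabla u\|_{L^2}$ on $V_0$, closing coercivity. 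Otherwise I would establish a Poincar\'e-type inequality of the form
\[
\|u\|_{L^2(\Omega)}^2\leq C\Big(\|\nabla u\|_{L^2(\Omega)}^2+\int_\Omega b u^2\,dx+\int_{\Gamma_\ll}\alpha u^2\,d\Gamma_\ll+\int_{\Gamma_{01}}a\beta u^2\,dx'\Big),
\]
by a standard compactness/contradiction argument: a would-be sequence $u_k$ with $\|u_k\|_{L^2}=1$ and vanishing right-hand side converges along a subsequence in $L^2(\Omega)$ by Rellich to a constant, and the vanishing of the boundary/volume penalty terms forces that constant to be zero, contradicting $\|u_k\|_{L^2}=1$, since at least one of $b,\alpha,\beta$ is not identically zero.

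Once continuity and coercivity are in place, Lax--Milgram delivers a unique $u\in V$ (resp.\ $V_0$) with $B(u,\phi)=F(\phi)$ for every $\phi\in V$ (resp.\ $V_0$); in the degenerate case both $B(u,\cdot)$ and $F$ vanish on constants (for $F$ this uses precisely the compatibility condition from Definition \ref{defel}), so the identity extends from $V_0$ to all of $H^1(\Omega)$ and produces the unique mean-zero weak solution. The quantitative estimate $\|u\|_{H^1(\Omega)}\leq C(\|f\|_{L^2}+\|\varphi\|_{L^2(\Gamma_\ll)}+\|\psi\|_{L^2(\Gamma_{01})})$ follows by testing with $\phi=u$, using coercivity on the left, continuity of $F$ on the right, and absorbing.
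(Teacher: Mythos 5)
Your proposal is correct and follows exactly the route the paper indicates: the paper gives no written proof for this proposition, stating only that it "can be proven in the standard way by the Lax--Milgram Lemma," and your argument is precisely that standard way, with the continuity (via the trace map on the Lipschitz cylinder), the coercivity (via a Poincar\'e-type inequality obtained by compactness/contradiction), and the degenerate zero-mean case all handled correctly. No gaps.
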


We are going to study the $H^2$ regularity of the weak solutions established
in Proposition \ref{prop.weak.elliptic}. To this end, we start with the
corresponding result for the
elliptic problem of a special case stated in the next lemma.

\begin{lemma}[$H^2$ regularity: a special case]
\label{lem.reg.elliptic.special}
Assume that $a, f$ and $\varphi$ satisfy the assumptions in Proposition \ref{prop.weak.elliptic}, and
let $u\in H^1(\Omega)$ be a weak solution to
\begin{equation}\label{eq1li}
  \left\{
  \begin{array}{lr}
  - \Delta_h u-\partial_z( a\partial_z u)= f,&\mbox{in } \Omega, \\
  \partial_n u= \varphi,&\mbox{on } \Gamma_\ll, \\
  \partial_\nu u=0,&\mbox{on } \Gamma_{01}.
  \end{array}
  \right.
\end{equation}
Suppose in addition that
$a\in C(\overline\Omega), \partial_za\in L^{\infty}(\Omega)$ and
$\varphi\in H^{\frac12}(\Gamma_\ll).$

Then, $u\in H^{2}(\Omega)$, and the following estimate holds
\beq
\|u\|_{H^{2}(\Omega)}\leq C(\|f\|_{L^2(\Omega)}+\|\varphi\|_{H^{\frac12}(\Gamma_\ll)} +\|u\|_{H^{1}(\Omega)}),
\eeq
where $C$ is a positive constant depending only on $\Omega$,
the modulus of continuity of $a$, $\|a\|_{L^\infty(\Omega)}$ and $\|\partial_za\|_{L^\infty(\Omega)}$.
\end{lemma}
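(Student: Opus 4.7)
The key difficulty in the proof is the non-smoothness of $\partial\Omega$ at the edges $\partial M\times\{L_0,L_1\}$, which prevents a direct application of classical $H^2$ elliptic regularity up to the boundary. My plan is to exploit the homogeneous Neumann condition on $\Gamma_{01}$ to reflect and periodize the problem in the vertical variable, reducing it to an elliptic problem on the smooth product manifold $\tilde\Omega:=M\times\mathbb{T}_{2\ell}$, where $\ell=L_1-L_0$ and $\mathbb{T}_{2\ell}=\mathbb{R}/(2\ell\mathbb{Z})$. On $\tilde\Omega$ only the lateral Neumann-type condition $\partial_n u=\varphi$ survives, and standard boundary $H^2$ regularity for elliptic equations with continuous leading coefficients and Neumann data on a smooth boundary closes the argument.

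In the first step I would extend $u$, $a$, $f$ and $\varphi$ from $\Omega$ to $\tilde\Omega$ by successive even reflections across $z=L_0$ and $z=L_1$. Even extension preserves the $H^1$ class (with the weak $z$-derivative of the extension being the odd reflection of $\partial_z u$), preserves continuity of $\tilde a\in C(\overline{\tilde\Omega})$, and preserves the $L^\infty$-bound on $\partial_z\tilde a$ as the odd reflection of $\partial_z a$; moreover $\varphi$ extends to an element of $H^{1/2}(\partial M\times\mathbb{T}_{2\ell})$. The crucial verification is that $\tilde u$ is a weak solution on $\tilde\Omega$: given any test function $\phi\in H^1(\tilde\Omega)$, I would decompose $\phi$ into its even and odd parts with respect to the reflection $z\mapsto 2L_0-z$, observe that the odd part contributes zero by symmetry, and reduce the even part to an admissible test function on $\Omega$ in Definition \ref{defel}. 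The condition $\partial_\nu u=0$ on $\Gamma_0$ is precisely what ensures that no residual boundary term across $\Gamma_0$ spoils this splitting; the same argument applies at $\Gamma_1$.

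In the second step, on the now-smooth domain $\tilde\Omega$ with only lateral Neumann boundary, I would rewrite the equation in non-divergence form as
\[
-\Delta_h\tilde u-\tilde a\,\partial_z^2\tilde u=\tilde f+\partial_z\tilde a\,\partial_z\tilde u\ \in L^2(\tilde\Omega),
\]
and apply the standard difference-quotient argument in every direction tangent to $\partial M\times\mathbb{T}_{2\ell}$, after flattening the smooth hypersurface $\partial M$. After periodization the vertical coordinate $z$ has become fully tangential, so every direction except the inward normal to $\partial M$ admits admissible difference quotients; this yields $\nabla_h\tilde u,\partial_z\tilde u\in H^1(\tilde\Omega)$, and the one missing second derivative $\partial_z^2\tilde u$ is recovered algebraically from the equation using $\tilde a\geq\lambda>0$. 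Restricting back to $\Omega$ produces the desired $H^2$ estimate, with a constant depending only on $\Omega$, $\|a\|_{L^\infty}$, $\|\partial_z a\|_{L^\infty}$ and the modulus of continuity of $a$.

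I expect the main obstacle to be the careful justification of Step 1, namely that the evenly reflected $\tilde u$ genuinely satisfies the weak form on $\tilde\Omega$. This rests on the homogeneous Neumann condition on $\Gamma_{01}$ being exactly compatible with even symmetry; if this condition were instead Robin-type with a nonzero coefficient or inhomogeneous, the reflection trick would fail and one would have to resort to a delicate near-edge analysis. A secondary technical point is that the reflected leading coefficient $\tilde a$ is only continuous (not necessarily $C^1$) across the reflection planes, so the $H^2$ theory must be invoked in the form requiring merely continuous principal coefficients together with the $L^\infty$ bound on $\partial_z\tilde a$ used to move the lower-order term to the right-hand side.
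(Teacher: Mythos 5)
Your proposal is correct and shares the paper's central idea --- the homogeneous Neumann condition on $\Gamma_{01}$ permits an even reflection in $z$ that removes the non-smooth edges --- but the two arguments diverge after the reflection. The paper reflects $u,a,f,\varphi$ once across each of $z=L_0$ and $z=L_1$ onto the doubled cylinder $M\times(2L_0-L_1,2L_1-L_0)$, verifies the reflected function is a weak solution there, and then multiplies by a cutoff $\eta$ supported in a smooth intermediate domain $\mathcal O$ with $\Omega\subseteq\mathcal O\subseteq\tilde\Omega$ whose boundary avoids the horizontal faces; the truncation $U=\tilde u\eta$ solves a Neumann problem on the smooth bounded domain $\mathcal O$, to which classical $H^2$ regularity (with continuous leading coefficient) is applied as a black box. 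You instead periodize to $M\times\mathbb T_{2\ell}$, so that $z$ becomes a tangential direction on a smooth manifold with boundary $\partial M\times\mathbb T_{2\ell}$, and run the tangential difference-quotient argument directly. Your route avoids the cutoff bookkeeping and the commutator terms it generates, at the price of having to carry out (or carefully cite) the difference-quotient scheme on a product manifold rather than a bounded domain in $\mathbb R^{N+1}$; the paper's route is longer but reduces everything to a standard citation. Your verification of the weak form on the reflected domain via the even/odd splitting of test functions is sound and is essentially the same computation the paper does by integration by parts.

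Two small corrections. First, in your Step 2 the derivative that the tangential difference quotients fail to control is the second \emph{normal} derivative with respect to $\partial M$ (a horizontal direction after flattening), not $\partial_z^2\tilde u$: since $z$ is tangential after periodization, $\partial_z^2\tilde u$ is already obtained from the $z$-difference quotients, and it is $\partial_{nn}\tilde u$ that must be recovered algebraically from the equation, using that the coefficient of the normal-normal term in the flattened $\Delta_h$ is bounded below (not the bound $\tilde a\geq\lambda$). Second, when you pass to non-divergence form you should note that with $\tilde a$ merely continuous the difference-quotient estimate requires freezing the coefficient and a perturbation argument, which is exactly where the modulus of continuity of $a$ enters the constant, consistent with the statement of the lemma.
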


\begin{proof}
Our strategy to prove the conclusion is as follows: we extend the weak
solution appropriately to a larger domain, multiply it by some cutoff function, and
show that the resultant satisfies some elliptic equations subject to
the Neumann boundary conditions in a smooth
domain, for which the classical regularity and elliptic estimates apply.

We first extend the boundary function $\varphi$ to the
whole domain $\Omega$. By the trace theorem, one can find an extension  $\varphi_{ext}\in H^{1}(\Omega)$ of $\varphi$ to the domain $\Omega$, such
that $\varphi_{ext}=\varphi$ on $\Gamma_\ll$, in the sense of trace, and
\beq
c\|\varphi_{ext}\|_{H^{1}(\Omega)}\leq \|\varphi\|_{H^{\frac12}(\Gamma_\ll)}\leq C\|\varphi_{ext}\|_{H^{1}(\Omega)},
\eeq
for two positive constants $c$ and $C$ depending
only on $\Omega$. Thanks to this, in the rest of the proof we always suppose
that the boundary function $\varphi$ has already been extended to the whole
domain $\Omega$, such that $\varphi\in H^{1}(\Omega)$ and the above estimate holds with $\varphi_{ext}=\varphi$.

We first extend $u$, so that it satisfies
a similar elliptic equation in the extended domain. To this end, let $\tilde\Omega=M\times(2L_0-L_1, 2L_1-L_0)$, and extend $u$
in $z$ evenly, with respect to the plane $z=L_0$ and $z=L_1$,
that is we define
\beq
\tilde u(x',z)=
\left\{
\begin{array}{ll}
  u(x',2L_0-z), &z\in (2L_0-L_1, L_0),\\
  u(x',z),& z\in [L_0,L_1], \\
  u(x',2L_1-z),& z\in (L_1,2L_1-L_0),
\end{array}
\right.
\eeq
for $x=(x',z)\in\tilde \Omega$. The extended functions $\tilde a, \tilde f, \tilde\alpha, \tilde \varphi$ are defined in the similar way as
$\tilde u$. Then one can check that $\tilde u\in H^1(\tilde\Omega)$ is a weak solution to
\begin{equation}\label{eq1li.2}
  \left\{
  \begin{array}{lr}
  - \Delta_h\tilde u-\partial_z(\tilde a\partial_z\tilde u)=\tilde f,&\mbox{in }\tilde\Omega, \\
  \partial_n\tilde u=\tilde\varphi,&\mbox{on }\tilde\Gamma_\ll, \\
  \partial_\nu\tilde u=0,&\mbox{on }\tilde\Gamma_{01},
  \end{array}
  \right.
\end{equation}
where $\tilde\Gamma_\ll=\partial M\times(2L_0-L_1,2L_1-L_0)$ and $\tilde\Gamma_{01}=M\times\{2L_0-L_1,2L_1-L_0\}$.

We will introduce some appropriate truncation function of $\tilde u$,
and show that the truncation satisfies some elliptic equation, subject
to the Neumann boundary condition, in a smooth domain. To this end,
let us first take a smooth bounded domain $\mathcal O$, with
$\Omega\subseteq\mathcal O\subseteq\tilde\Omega$,
$\partial\mathcal O\cap\tilde\Omega\cap\Gamma_{01}=\emptyset$
and $\partial\mathcal O\cap\tilde\Omega\cap\tilde\Gamma_{01}=\emptyset$,
and choose a function $\eta\in C_0^\infty(\mathbb R^{N+1})$, with $\eta\equiv1$
on $\Omega$, and $\eta\equiv0$ on $\tilde\Omega\setminus\mathcal O$. Since
$\tilde u$ is a weak solution to (\ref{eq1li.2}), choosing $\phi\eta$ as a
testing function yields
\begin{eqnarray}
  \int_{\tilde\Omega}[ \nabla_h\tilde u\cdot \nabla_h(\phi\eta)+\tilde a\partial_z \tilde u\partial_z(\phi\eta)]dx=
  \int_{\tilde\Omega}\tilde f\phi\eta dx+\int_{\tilde\Gamma_\ll}\tilde\varphi \eta\phi d\Gamma_\ll, \label{eq2li}
\end{eqnarray}
for any $\phi\in H^1(\tilde\Omega)$. Noticing that $\partial\mathcal O=(\partial\mathcal O\cap\tilde\Omega)\cup(\partial\mathcal O\cap\tilde\Gamma_s)$, and $ \nabla_h\eta=0$ on $\partial\mathcal O\cap\tilde\Omega$,
integration by parts yields
\begin{eqnarray}
  \int_{\tilde\Omega} \nabla_h\tilde u \cdot \nabla_h(\phi\eta)dx  
  &=&\int_{\mathcal O}[ \nabla_h(\tilde u\eta)\cdot \nabla_h\phi+( \nabla_h\cdot(\tilde u \nabla_h\eta)+ \nabla_h \tilde u\cdot \nabla_h\eta)\phi]dx\nonumber\\
  &&-\int_{\partial\mathcal O\cap\tilde\Gamma_\ll}\tilde u\partial_n\eta\phi d\Gamma_\ll.\label{eq3li}
\end{eqnarray}
Noticing that $\partial_z\eta=0$ on $\partial\mathcal O\cap\tilde\Omega$, it follows from integration by parts that
\begin{eqnarray}
  &&\int_{\tilde\Omega}a\partial_z\tilde u\partial_z(\phi\eta)dx
  =\int_{\mathcal O}a\partial_z(\tilde u\eta)\partial_z\phi dx+\int_{\mathcal O}(\partial_z(a\partial_z\eta\tilde u)+a\partial_z\tilde u\partial_z\eta)\phi dx.\label{eq4li}
\end{eqnarray}
Plugging (\ref{eq3li}) and (\ref{eq4li}) into (\ref{eq2li}), and noticing that
$\eta=0$ on $\tilde\Gamma_\ll\setminus\partial\mathcal O$, we deduce
\begin{eqnarray}
  &&\int_{\mathcal O}[ \nabla_h(\tilde u\eta)\cdot \nabla_h\phi+a\partial_z(\tilde u\eta)\partial_z\phi] dx\nonumber\\
  &=&\int_{\mathcal O}
  [\tilde f\eta-( \nabla_h\cdot(\tilde u \nabla_h\eta)+ \nabla_h \tilde u\cdot \nabla_h\eta)-(\partial_z(a\partial_z\eta\tilde u)\nonumber\\
  &&+a\partial_z\tilde u\partial_z\eta)]\phi dx+\int_{\partial\mathcal O\cap \tilde\Gamma_\ll}(\tilde\varphi\eta+ \tilde u\partial_n\eta)\phi d\Gamma_\ll,\label{eq5li}
\end{eqnarray}
for any $\phi\in H^1(\tilde\Omega)$.

Define the truncation $U=\tilde u\eta$, and denote
\beq
F=\tilde f\eta-( \nabla_h\cdot(\tilde u \nabla_h\eta)+ \nabla_h \tilde u\cdot \nabla_h\eta)-(\partial_z(a\partial_z\eta\tilde u)+a\partial_z\tilde u\partial_z\eta),
\eeq
and
\beq
\Phi=\left\{
\begin{array}{lr}
\tilde u\partial_n\eta+\tilde\varphi\eta,&\mbox{on }\partial\mathcal O\cap\tilde\Gamma_\ll, \\
0,&\mbox{on }\partial\mathcal O\setminus\tilde\Gamma_\ll.
\end{array}
\right.
\eeq
Then, noticing that any function from $H^1(\mathcal O)$ can be extended
to be a function in $H^1(\tilde\Omega)$, (\ref{eq5li}) implies that
$U\in H^1(\mathcal O)$ is a weak solution to
\begin{equation}
  \left\{
  \begin{array}{lr}
  - \Delta_hU-\partial_z(\tilde a\partial_zU)=F,&\mbox{in }\mathcal O, \\
  \partial_{\mathcal N} U=\Phi,&\mbox{on }\partial\mathcal O,
  \end{array}
  \right.
\end{equation}
where $\mathcal N$ is the unit outward normal vector at the boundary $\partial\mathcal O$.

Since $\mathcal O$ is a smooth domain, one can now apply the classic regularity and elliptic estimates to the truncation $U$. Then, by the assumptions, one can check that
$F\in L^2(\Omega)$, and $\Phi\in H^{\frac12}(\partial\mathcal O)$, by the
trace theorem. Therefore, it follows from the regularity and elliptic
estimates for the elliptic equations subject to the Neumann boundary conditions in the bounded
smooth domains, that $U\in H^{2}(\mathcal O)$
and the following estimate holds
\begin{eqnarray}\label{eq5-1li}
  \|U\|_{H^{2}(\mathcal O)}\leq C(\|F\|_{L^2(\mathcal O)}+\|\Phi\|_{H^{\frac12}(\partial\mathcal O)}+\|U\|_{H^{1}(\mathcal O)}),
\end{eqnarray}
where the constant $C$ depends only on $\mathcal O$, the modulus of continuity of $a$, $\lambda$ and $\|\partial_za\|_{L^\infty(\mathcal O)}$.

We can now derive the regularity and elliptic estimate for $u$
from those for $U$.
Recalling that $\eta\equiv1$
on $\Omega$, one obtains $u\in H^{2}(\Omega)$, and it is obvious that
\beq
\|u\|_{H^{2}(\Omega)}\leq \|U\|_{H^{2}(\mathcal O)}.
\eeq
Note that
\beq
\|F\|_{L^2(\mathcal O)}\leq C(\|f\|_{L^2(\Omega)}+\|u\|_{H^{1}(\Omega)}),
\eeq
for a positive constant $C$ depending only on $\Omega$, $\|a\|_{L^\infty(\Omega)}$ and $\|\partial_za\|_{L^\infty(\Omega)}$,
\beq
\|U\|_{H^{1}(\mathcal O)}\leq C\|u\|_{H^{1}(\Omega)},
\eeq
for a positive constant $C$ depending only on $\Omega$, and by the trace inequality
\begin{eqnarray}
\|\Phi\|_{H^{\frac12}(\mathcal O)}&\leq& C\|\Phi\|_{H^{1}(\mathcal O)}
\leq C(\|u\|_{H^{1}(\Omega)}+\|\varphi\|_{H^{1}(\Omega)})\nonumber\\
&\leq&C(\|u\|_{H^{1}(\Omega)}+\|\varphi\|_{H^{\frac12}(\Gamma_\ll)}),
\end{eqnarray}
for a positive constant $C$ depending only on $\Omega$. In the above inequality, one recalls that the
boundary function $\varphi$ has already been extended to the whole domain,
at the beginning of the proof.

Thanks to the above estimates, it follows from (\ref{eq5-1li}) that
\beq
\|u\|_{H^{2}(\Omega)}\leq C(\|f\|_{L^2(\Omega)}+\|\varphi\|_{H^{\frac12}(\Gamma_\ll)}
+\|u\|_{H^{1}(\Omega)}),
\eeq
for a positive constant $C$ depending only on $\Omega$, the modulus of continuity of $a$, $\lambda$, $\|a\|_{L^\infty(\Omega)}$ and $\|\partial_za\|_{L^\infty(\mathcal O)}$. This completes the proof of
Lemma \ref{lem.reg.elliptic.special}.
\end{proof}

\begin{prop}[$H^2$ regularity: the general case]\label{prop.reg.elliptic.general}
Assume, in addition to the assumptions on $a,f,\varphi$ in Lemma \ref{lem.reg.elliptic.special}, that
\begin{eqnarray}
      b\in L^\infty(\Omega),\quad\alpha,\beta\in W^{1,\infty}(\Omega),\quad \psi\in H^{\frac12}(\Gamma_{01}),
\end{eqnarray}
where $\alpha$ and $\beta$ have been
extended to the whole domain $\Omega$.

Then, the unique weak solution $u$ stated in Proposition \ref{prop.weak.elliptic} belongs to
$H^{2}(\Omega)$, and the following estimate holds
\beq
\|u\|_{H^{2}(\Omega)}\leq C(\|f\|_{L^2(\Omega)}+\|\varphi\|_{H^{\frac12}(\Gamma_\ll)} + \|\psi\|_{H^{\frac12}(\Gamma_{01})}+\|u\|_{H^{1}(\Omega)}),
\eeq
for a positive constant $C$ depending only on $\Omega$, the modulus continuity
of $a$, $\lambda$, $\|a\|_{L^\infty(\Omega)}$, $\|\partial_za\|_{L^\infty(\Omega)}$, $\|b\|_{L^\infty(\Omega)}$, $\|\alpha\|_{W^{1,\infty}(\Omega)}$ and $\|\beta\|_{W^{1,\infty}(\Omega)}$.
\end{prop}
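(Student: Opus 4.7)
The plan is to reduce Proposition~4.2 to the special case handled by Lemma~\ref{lem.reg.elliptic.special}. Proposition~\ref{prop.weak.elliptic} already furnishes a weak solution $u \in H^1(\Omega)$, so its traces on $\Gamma_\ll$ and $\Gamma_{01}$ are well-defined elements of $H^{1/2}$. This lets us absorb the lower-order terms $bu$, $\alpha u$, $\beta u$ into the inhomogeneous data (rather than keeping them as unknown), and then subtract off an $H^2$-lifting to turn the inhomogeneous Robin condition on the flat horizontal boundary $\Gamma_{01}$ into the homogeneous Neumann condition required by Lemma~\ref{lem.reg.elliptic.special}.

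\textbf{Execution.} I would first rewrite the PDE as $-\Delta_h u - \partial_z(a\partial_z u) = \tilde f$ with $\tilde f := f - bu \in L^2(\Omega)$ (since $b \in L^\infty$, $u \in L^2$), and the boundary conditions as
\[
\partial_n u = \tilde\varphi := \varphi - \alpha u \quad\text{on }\Gamma_\ll, \qquad \partial_\nu u = \tilde\psi := \psi - \beta u \quad\text{on }\Gamma_{01}.
\]
The membership $\tilde\varphi \in H^{1/2}(\Gamma_\ll)$ follows from the trace theorem for $u \in H^1(\Omega)$ together with the fact that multiplication by $\alpha \in W^{1,\infty}(\Omega)$ preserves $H^{1/2}(\Gamma_\ll)$; the argument for $\tilde\psi \in H^{1/2}(\Gamma_{01})$ is identical. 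I would then construct a lifting $w \in H^2(\Omega)$ satisfying $\partial_\nu w = \tilde\psi$ on $\Gamma_{01}$, $\partial_n w = 0$ on $\Gamma_\ll$, and the quantitative bound $\|w\|_{H^2(\Omega)} \leq C(\|\tilde\psi\|_{H^{1/2}(\Gamma_{01})}+\|u\|_{H^1(\Omega)})$. Setting $v := u - w$, the function $v$ satisfies
\[
-\Delta_h v - \partial_z(a\partial_z v) = \tilde f + \Delta_h w + \partial_z(a\partial_z w) \in L^2(\Omega),
\qquad \partial_n v = \tilde\varphi \in H^{1/2}(\Gamma_\ll), \qquad \partial_\nu v = 0 \text{ on } \Gamma_{01},
\]
which is precisely of the form covered by Lemma~\ref{lem.reg.elliptic.special}. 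Applying Lemma~\ref{lem.reg.elliptic.special} to $v$ yields $v \in H^2(\Omega)$ with the corresponding estimate, and combining with the lifting bound gives $u = v + w \in H^2(\Omega)$ with the inequality asserted in Proposition~\ref{prop.reg.elliptic.general}.

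\textbf{Main obstacle.} The genuinely delicate point is the construction of the $H^2$-lifting $w$, because the cylindrical domain $\Omega = M \times (L_0,L_1)$ has edges at $\partial M \times \{L_0, L_1\}$ and is therefore not globally smooth. A naive product ansatz such as $w(x',z) = -(z-L_0)\chi(z)\tilde\psi(x', L_0)$ would force the unrealistic regularity $\tilde\psi(\cdot, L_0) \in H^2(M)$, whereas only $H^{1/2}(M)$ is available. The natural remedy is to repeat the even-reflection-plus-cutoff device used in the proof of Lemma~\ref{lem.reg.elliptic.special}: extend the data (and, if needed, solve an auxiliary Neumann problem such as $-\Delta w + w = 0$ with the prescribed Neumann trace) on a smooth enlarged domain $\mathcal{O}\supseteq\Omega$ obtained by reflecting across $\Gamma_{01}$, use the classical $H^2$ trace-lifting / Neumann regularity on $\mathcal{O}$, multiply by a suitable cutoff $\eta$ supported away from the artificial boundary, and restrict back to $\Omega$. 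Once the lifting is in place, the remainder of the argument is essentially bookkeeping: tracking the norms through the reduction and noting that all implicit constants depend only on the data appearing in the statement.
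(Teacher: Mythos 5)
Your proposal follows essentially the same route as the paper: absorb the lower-order terms into the data via $f_0=f-bu$, $\varphi_0=\varphi-\alpha u$, $\psi_0=\psi-\beta u$ (each controlled through the trace inequality and the $W^{1,\infty}$ bounds on $\alpha,\beta$), lift the resulting $H^{1/2}(\Gamma_{01})$ Neumann data to an $H^2(\Omega)$ function, subtract it, and apply Lemma \ref{lem.reg.elliptic.special} to the remainder. The only (harmless) deviation is that you additionally require the lifting to satisfy $\partial_n w=0$ on $\Gamma_\ll$, which is unnecessary and would complicate the construction near the edges; the paper instead allows $\partial_n\Psi_0\neq0$ and simply absorbs it into the lateral datum $\varphi_1=\varphi_0-\partial_n\Psi_0\in H^{1/2}(\Gamma_\ll)$, which still falls under the special-case lemma.
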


\begin{proof}
We set
\beq
\varphi_0=\varphi-\alpha u, \quad\psi_0=\psi-\beta u,\quad f_0=f-bu.
\eeq
Then it is obvious that $u$ is a weak solution to
\begin{equation}\label{eq5-2li}
  \left\{
  \begin{array}{lr}
    - \Delta_hu-\partial_z(a\partial_zu)=f_0,&\mbox{in }\Omega,\\
    \partial_nu=\varphi_0,&\mbox{on }\Gamma_\ll,\\
    \partial_\nu u=\psi_0,&\mbox{on }\Gamma_{01}.
  \end{array}
  \right.
\end{equation}
Since $\psi_0\in H^{\frac12}(\Gamma_{01})$ by the trace theorem there
is a function $\Psi_0\in H^{2}(\Omega)$, such that $\partial_\nu\Psi_0=\psi_0$
on $\Gamma_{01}$, and $\|\Psi_0\|_{H^{2}(\Omega)}\leq C\|\psi_0\|_{H^{\frac12}(\Gamma_{01})}$ for a positive constant $C$
depending only on $\Omega$. Setting $u_1=u-\Psi_0$, one can easily check that
$u_1$ is a weak solution to the following elliptic problem
\begin{equation}\label{eq5-3li}
  \left\{
  \begin{array}{lr}
    - \Delta_hu_1-\partial_z(a\partial_zu_1)=f_1,&\mbox{in }\Omega,\\
    \partial_nu_1=\varphi_1,&\mbox{on }\Gamma_\ll,\\
    \partial_\nu u_1=0,&\mbox{on }\Gamma_{01},
  \end{array}
  \right.
\end{equation}
where $f_1$ and $\varphi_1$ are given by
\beq
f_1=f_0+ \Delta_h\Psi_0+\partial_z(a\partial_z\Psi_0), \quad \varphi_1=\varphi_0- \partial_n\Psi_0.
\eeq
Note that $f_1\in L^2(\Omega)$ and $\varphi_1\in H^{\frac12}(\Gamma_\ll)$,
by Lemma \ref{lem.reg.elliptic.special}, one has $u_1\in H^{2}(\Omega)$, and the following estimate holds
\begin{equation}\label{eq6li}
\|u_1\|_{H^{2}(\Omega)}\leq C(\|f_1\|_{L^2(\Omega)}+\|\varphi_1\|_{H^{\frac12}(\Gamma_\ll)} +\|u_1\|_{H^{1}(\Omega)}),
\end{equation}
for a positive constant $C$ depending only on $\Omega$, the modulus continuity
of $a$, $\lambda$, $\|a\|_{L^\infty(\Omega)}$ and $\|\partial_za\|_{L^\infty(\Omega)}$.
Thus, recalling that $\Psi_0\in H^{2}(\Omega)$, it is clear that
$u=u_1+\Psi_0\in H^{2}(\Omega)$. Recalling that $\|\Psi_0\|_{H^{2}(\Omega)}\leq C\|\psi_0\|_{H^{\frac12}(\Gamma_{01})}$, for a positive constant $C$ depending only on $\Omega$, one deduces
\begin{eqnarray}
  \|u\|_{H^{2}(\Omega)}&\leq&\|u_1\|_{H^{2}(\Omega)} +\|\Psi_0\|_{H^{2}(\Omega)}\leq \|u_1\|_{H^{2}(\Omega)}+C\|\psi_0\|_{H^{\frac12}(\Gamma_{01})},\\
  \|u_1\|_{H^{1}(\Omega)}&\leq&\|u\|_{H^{1}(\Omega)} +\|\Psi_0\|_{H^{1}(\Omega)}\leq \|u\|_{H^{1}(\Omega)}+C\|\psi_0\|_{H^{\frac12}(\Gamma_{01})},\\
  \|\varphi_1\|_{H^{\frac12}(\Gamma_\ll)}&\leq&\|\varphi_0\|_{H^{\frac12} (\Gamma_\ll)} +\|\partial_n\Psi_0\|_{H^{\frac12}(\Gamma_\ll)}\leq \|\varphi_0\|_{H^{\frac12}(\Gamma_\ll)}+C\|\Psi_0\|_{H^{2}(\Omega)}\nonumber\\
  &\leq& C(\|\varphi_0\|_{H^{\frac12}(\Gamma_\ll)}+\|\psi_0
  \|_{H^{\frac12}(\Gamma_{01})}),
\end{eqnarray}
for a positive constant $C$ depending only on $\Omega$, and
\begin{eqnarray}
\|f_1\|_{L^2(\Omega)} &\leq& \|f\|_{L^2(\Omega)}+C\|u\|_{H^{1}(\Omega)}+C\|\Psi_0\|_{H^{2}(\Omega)}\nonumber\\
&\leq& C(\|f\|_{L^2(\Omega)}+\|u\|_{H^{1}(\Omega)}+ \|\psi_0\|_{H^{\frac12}(\Gamma_{01})}),
\end{eqnarray}
for a positive constant $C$ depending only on $\Omega$, $\|a\|_{L^\infty(\Omega)}$, $\|\partial_za\|_{L^\infty(\Omega)}$ and $\|b\|_{L^\infty(\Omega)}$.
Thanks to the above estimates, one derives from (\ref{eq6li}) that
\begin{equation}\label{eq7li}
\|u\|_{H^{2}(\Omega)}\leq C(\|f\|_{L^2(\Omega)}+\|\varphi_0\|_{H^{\frac12}(\Gamma_\ll)} +\|\psi_0\|_{H^{\frac12}(\Gamma_{01})}+\|u\|_{H^{1}(\Omega)}),
\end{equation}
for a positive constant $C$ depending only on $\Omega$, $\|a\|_{L^\infty(\Omega)}$, $\|\partial_za\|_{L^\infty(\Omega)}$ and $\|b\|_{L^\infty(\Omega)}$. One still need to estimate $\|\varphi_0\|_{H^{\frac12}(\Gamma_\ll)} +\|\psi_0\|_{H^{\frac12}(\Gamma_{01})}$, which is done as follows. By the
trace inequality, one has
\begin{eqnarray}
&&\|\varphi_0\|_{H^{\frac12}(\Gamma_\ll)} +\|\psi_0\|_{H^{\frac12}(\Gamma_{01})}\nonumber\\
&\leq&\|\varphi \|_{H^{\frac12}(\Gamma_\ll)}  +\|\psi \|_{H^{\frac12}(\Gamma_{01})}+\|\alpha u\|_{H^{\frac12}(\Gamma_\ll)}
+\|\beta u\|_{H^{\frac12}(\Gamma_{01})}\nonumber\\
&\leq& \|\varphi \|_{H^{\frac12}(\Gamma_\ll)}  +\|\psi \|_{H^{\frac12}(\Gamma_{01})}+C(\|\alpha u\|_{H^{1}(\Omega)}
+\|\beta u\|_{H^{1}(\Omega)})\nonumber\\
&\leq& \|\varphi \|_{H^{\frac12}(\Gamma_\ll)}  +\|\psi \|_{H^{\frac12}(\Gamma_{01})}+C \|  u\|_{H^{1}(\Omega)},
\end{eqnarray}
for a positive constant $C$ depending only on $\Omega$,  $\|\alpha\|_{W^{1,\infty}(\Omega)}$ and $\|\beta\|_{W^{1,\infty}(\Omega)}$.
Plugging the above estimate into (\ref{eq7li}) yields the conclusion.
\end{proof}

\subsection{Linear parabolic equations}
Given a positive time $\mathcal T$, set $Q_{\mathcal T}=\Omega\times(0,\mathcal T)$. Consider the parabolic problem
\begin{equation}
  \label{paraboic1}
  \left\{
  \begin{array}{lr}
  \partial_tu+\mathcal Lu=f,&\mbox{in }Q_{\mathcal T}, \\
  \mathcal Bu=\Phi,&\mbox{on }\partial\Omega\times(0,\mathcal T),\\
  u(\cdot,0)=u_0,&\mbox{on }\Omega,
  \end{array}
  \right.
\end{equation}
where the elliptic operator $\mathcal L$ is given by
\beq
\mathcal Lu=- \Delta_h u-\partial_z( a\partial_z u),
\eeq
the boundary operator $\mathcal B$ is given by
\beq
\mathcal Bu=
\left\{
\begin{array}{lr}
\partial_nu+\alpha u,&\mbox{on }\Gamma_\ll, \\
\partial_\nu u+\beta u,&\mbox{on }\Gamma_{01},
\end{array}
\right.
\eeq
and the boundary function $\Phi$ is given by
\beq
\Phi=\left\{
\begin{array}{lr}
\varphi,&\mbox{ on }\Gamma_\ll,\\
\psi,&\mbox{ on }\Gamma_{01},
\end{array}
\right.
\eeq
for some functions $\varphi$ and $\psi$.

Throughout this subsection, we assume that the coefficients $a, \alpha$ and $\beta$ in the elliptic
operator $\mathcal L$ and the boundary operator $\mathcal B$ are independent of the time variable $t$, while the nonhomogeneous functions $\varphi$ and
$\psi$ are allowed to depend on $t$. Assume that the coefficients $a, \alpha$ and $\beta$ satisfy
\begin{eqnarray}\label{asum1}
a\in C(\overline\Omega),\quad\partial_za\in L^\infty(\Omega), \quad \lambda\leq a(x)\leq\Lambda,\quad 0\leq\alpha,\beta\in W^{1,\infty}(\Omega),
\end{eqnarray}
for some positive constants $\lambda$ and $\Lambda$,  where the functions $\alpha, \beta$, defined on the boundary $\partial\Omega$, have been extended to the whole domain $\Omega$. We assume moreover  that the boundary functions $\varphi, \psi$ and $\Phi$ satisfy
\begin{equation}\label{asum2}
\varphi\in L^2(0,T; H^{\frac12}(\Gamma_\ll)),\quad\psi\in L^2(0,T;H^{\frac12}(\Gamma_{01})),\quad\partial_t\Phi\in L^2(\partial\Omega\times(0,T)).
\end{equation}

We are going to prove the existence and uniqueness of weak and strong solutions (see the definitions below) to (\ref{paraboic1}).

\begin{df}
Given a positive time $\mathcal T\in(0,\infty)$ and a function $u_0\in L^2(\Omega)$.
Assume that (\ref{asum1})
and (\ref{asum2}) hold, and $f\in L^2(Q_{\mathcal T})$. A function $u$ is called a weak solution to (\ref{paraboic1}), if $u\in L^\infty(0,T; L^2(\Omega))\cap L^2(0,T; H^1(\Omega))$, and the following integral equality holds
\beq
\int_0^T[-(u,\partial_t\xi)+\langle u,\xi\rangle_a]dt=\int_0^Tb(f,\varphi,\psi,\xi) dt+(u_0,\xi(\cdot, 0)),
\eeq
for any $\xi\in H^1(Q_{\mathcal T})$, with $\xi(\cdot,\mathcal T)\equiv0$, where $(\cdot,\cdot)$
is the $L^2(\Omega)$ inner product,
\beq
\langle u,\xi \rangle_a=\int_\Omega( \nabla_hu\cdot\nabla\xi+a\partial_zu\partial_z\xi)dx
+\int_{\Gamma_\ll}\alpha u\xi d\Gamma_\ll+\int_{\Gamma_{01}}a\beta u\xi dx',
\eeq
and
\beq
b(f,\varphi,\psi,\xi)=\int_\Omega f\xi dx+\int_{\Gamma_\ll}\varphi\xi d\Gamma_\ll+\int_{\Gamma_{01}}\psi\xi dx'.
\eeq
\end{df}

\begin{df}
Given a positive time $\mathcal T\in(0,\infty)$ and a function $u_0\in H^1(\Omega)$. Assume that (\ref{asum1})
and (\ref{asum2}) hold, and let $f\in L^2(Q_{\mathcal T})$. A function $u$ is called a strong solution to (\ref{paraboic1}), if
\beq
u\in C([0,\mathcal T]; H^1(\Omega))\cap L^2(0,\mathcal T; H^2(\Omega)),\quad\partial_tu\in L^2(0,\mathcal T; L^2(\Omega)),
\eeq
the equation in (\ref{paraboic1}) is satisfied, a.e.\,in $Q_{\mathcal T}$, the boundary
condition in (\ref{paraboic1}) is satisfied in the sense of trace, and the
initial condition in (\ref{paraboic1}) is fulfilled.
\end{df}
Let us first transform the nonhomogeneous
boundary value problem to the homogeneous one.
For each $t\in[0,\mathcal T]$, we define $\mathcal U_\Phi(\cdot,t)$ as the unique solution to
\begin{equation}
  \left\{
  \begin{array}{lr}
  \mathcal L\mathcal U_\Phi=0, &\mbox{in }\Omega,\\
  \mathcal B\mathcal U_\Phi=\Phi, &\mbox{on }\partial\Omega.
  \end{array}
  \right.
\end{equation}
Noticing that $\Phi\in C([0,\mathcal T]; L^2(\partial\Omega))$, by applying Proposition \ref{prop.weak.elliptic} and Proposition \ref{prop.reg.elliptic.general}, one can see that
$\mathcal U_\Phi\in C([0,\mathcal T]; H^1(\Omega))\cap L^2(0,\mathcal T; H^2(\Omega))$, and
\begin{align*}
\|\mathcal U_\Phi\|_{L^\infty(0,\mathcal T;H^1(\Omega))}+\|\mathcal U_\Phi\|_{L^2(0,\mathcal T; H^2(\Omega))}\leq&C\big(\|\varphi\|_{L^2(0,\mathcal T;H^{\frac12}(\Gamma_\ll))} +\|\psi\|_{L^2(0,\mathcal T;H^{\frac12}(\Gamma_{01}))}\big),
\end{align*}
for a positive constant $C$ depending only on $\Omega$, $\lambda$, the modulus of continuity of $a$, $\|a\|_{L^\infty(\Omega)}$, $\|\partial_za\|_{L^\infty(\Omega)}$, $\|\alpha\|_{W^{1,\infty} (\Omega)}$ and $\|\beta\|_{W^{1,\infty}(\Omega)}$. Note that $\partial_t\mathcal U_\Phi$ satisfies
\begin{equation}
  \left\{
  \begin{array}{lr}
  \mathcal L\mathcal \partial_t\mathcal U_\Phi=0, &\mbox{in }\Omega,\\
  \mathcal B\mathcal \partial_t\mathcal U_\Phi=\partial_t\Phi, &\mbox{on }\partial\Omega,
  \end{array}
  \right.
\end{equation}
by Proposition \ref{prop.weak.elliptic}, it follows
\beq
\|\partial_t\mathcal U_\Phi\|_{L^2(0,T; H^1(\Omega))}\leq C\|\partial_t\Phi\|_{L^2(0,T; L^2(\partial\Omega))}.
\eeq
Therefore, we have
\begin{eqnarray}
  &&\|\mathcal U_\Phi\|_{L^\infty(0,\mathcal T;H^1(\Omega))}+\|\mathcal U_\Phi\|_{L^2(0,\mathcal T; H^2(\Omega))}+\|\partial_t\mathcal U_\Phi\|_{L^2(0,\mathcal T; H^1(\Omega))}\nonumber\\
  &\leq&C\big(\|\varphi\|_{L^2(0,\mathcal T;H^{\frac12}(\Gamma_\ll))} +\|\psi\|_{L^2(0,\mathcal T;H^{\frac12}(\Gamma_{01}))}+\|\partial_t\Phi\|_{L^2(0,\mathcal T; L^2(\partial\Omega))}\big),\label{5.28.1}
\end{eqnarray}
for a positive constant $C$ depending only on $\Omega$, $\lambda$, the modulus of continuity of $a$, $\|a\|_{L^\infty(\Omega)}$, $\|\partial_za\|_{L^\infty(\Omega)}$, $\|\alpha\|_{W^{1,\infty} (\Omega)}$ and $\|\beta\|_{W^{1,\infty}(\Omega)}$.

Suppose that $u$ is a weak (strong) solution to (\ref{paraboic1}), by setting $v=u-\mathcal U_\Phi$, one can easily verify that $v$ is a weak (strong) solution to the following problem
\begin{equation}
  \label{paraboic2}
  \left\{
  \begin{array}{lr}
  \partial_tv+\mathcal Lv=g,&\mbox{in }Q_{\mathcal T}, \\
  \mathcal Bv=0,&\mbox{on }\partial\Omega\times(0,\mathcal T),\\
  v(\cdot,0)=v_0,&\mbox{on }\Omega,
  \end{array}
  \right.
\end{equation}
where
\beq
g=f-\partial_t\mathcal U_\Phi, \quad v_0=u_0-\mathcal U_\Phi(\cdot,0).
\eeq
Conversely, if $v$ is a weak (strong) solution to (\ref{paraboic2}), the $u=v+\mathcal U_\Phi$ is a weak (strong) solution to (\ref{paraboic1}).
Therefore, to prove the existence and uniqueness of weak (strong) solutions
to (\ref{paraboic1}), it suffices to prove the corresponding results for
(\ref{paraboic2}).

We are going to construct a sequence of approximated solutions $v_N$ to the
parabolic problem (\ref{paraboic2}) by discretizing in time.
Let $g\in L^2(Q_{\mathcal T})$. We fix an arbitrary
positive integer $N$, and let $h=\frac{\mathcal T}{N}$. We
set
\begin{equation}\label{fkdf}
g^{k+1}=\frac1h\int_{kh}^{(k+1)h}g(t) dt, \quad k=0,1,\cdots,N-1.
\end{equation}
Due to the assumption, it is obvious that $g^{k+1}\in L^2(\Omega)$ for
$k=0,1,\cdots,N-1.$ It follows from the Minkowski and H\"older inequalities
that
\begin{eqnarray}
  h\sum_{k=0}^{N-1}\|g^{k+1}\|_{L^2(\Omega)}&=&\sum_{k=0}^{N-1}\left\| \int_{kh}^{(k+1)h}g(t)dt\right\|_{L^2(\Omega)}\leq\sum_{k=0}^{N-1}  \int_{kh}^{(k+1)h}\|g(\cdot,t)\|_{L^2(\Omega)}dt\nonumber\\
  &=&\int_0^\mathcal T\|g(\cdot,t)\|_{L^2(\Omega)}dt=\|g\|_{L^1(0,\mathcal T; L^2(\Omega))} \leq\sqrt \mathcal T\|g\|_{L^2 (Q_{\mathcal T})}, \label{f1-1}
\end{eqnarray}
and
\begin{eqnarray}
  h\sum_{k=0}^{N-1}\|g^{k+1}\|_{L^2(\Omega)}^2&=&\frac1h \sum_{k=0}^{N-1}\left\| \int_{kh}^{(k+1)h}g(t)dt\right\|_{L^2(\Omega)}^2\nonumber\\
  &\leq&\frac1h\sum_{k=0}^{N-1}  \left(\int_{kh}^{(k+1)h}\|g(\cdot,t)\|_{L^2(\Omega)}dt\right)^2\nonumber\\
  &\leq&\int_0^\mathcal T\|g(\cdot,t)\|_{L^2(\Omega)}^2dt=\|g\|_{L^2 (Q_{\mathcal T})}^2.\label{f1-2}
\end{eqnarray}

We set $v^0=v_0$ and define
$v^{k+1}$ for $k=0, 1,\cdots,N-1$ successively as the unique solution to the
elliptic problem
\begin{equation}
  \label{paeq1}
  \left\{
  \begin{array}{lr}
  \frac{v^{k+1}-v^k}{h}+\mathcal Lv^{k+1}=g^{k+1},&\mbox{in }\Omega,\\
  \mathcal Bv^{k+1}=0,&\mbox{on }\partial\Omega.
  \end{array}
  \right.
\end{equation}
According to Proposition \ref{prop.weak.elliptic} and Proposition \ref{prop.reg.elliptic.general} there is a unique solution $v^{k+1}\in H^2(\Omega)$ to
(\ref{paeq1}) for $k=0,1,\cdots,N-1$. Define the approximate solution $v_N$ as
\begin{equation}\label{uN}
  v_N(t)=\sum_{k=0}^{N-1}\chi_{[kh,(k+1)h)}(t)v^{k+1},\quad t\in[0,T).
\end{equation}

Some a priori estimates on $v_N$ are stated in the following lemma.

\begin{lemma}\label{parapri}
Assume that (\ref{asum1}) holds. Set $v^0=v_0\in L^2(\Omega)$, and let $v^{k+1}$, $k=0,1,\cdots,N-1$, be the functions defined
by (\ref{paeq1}), and $v_N$ by (\ref{uN}). Then, the following estimates hold:

(i) There is a positive constant $C$ depending only on $\lambda$ and $T$, such that
\beq
\|v_N\|_{L^\infty(0,\mathcal T; L^2(\Omega))}+\|v_N\|_{L^2(0,\mathcal T;H^1(\Omega))} \leq C(\|g\|_{L^2(Q_{\mathcal T})}+\|v_0\|_{L^2(\Omega)});
\eeq

(ii) If we assume in addition that $v_0\in H^1(\Omega)$, then we have
\beq
\|v_N\|_{L^\infty(0,\mathcal T; H^1(\Omega))}+\|v_N\|_{L^2(0,\mathcal T;H^2(\Omega))}\leq C(\|g\|_{L^2(Q_{\mathcal T})}+\|v_0\|_{H^1(\Omega)}),
\eeq
for a positive constant $C$ depending only on $\Omega$, $\lambda$, $T$,
the modulus of continuity
of $a$, $\|a\|_{L^\infty(\Omega)}$, $\|\partial_za\|_{L^\infty(\Omega)}$, $\|\alpha\|_{W^{1,\infty}(\Omega)}$ and $\|\beta\|_{W^{1,\infty}(\Omega)}$.
\end{lemma}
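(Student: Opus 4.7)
The plan is to perform discrete energy estimates directly on the implicit Euler scheme (\ref{paeq1}), exploiting that $a,\alpha,\beta$ are time-independent and nonnegative (with $a\geq\lambda$), so that the natural energy functional telescopes cleanly when tested against time differences.

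For part (i), I would test (\ref{paeq1}) against $v^{k+1}$ in $L^2(\Omega)$. Integration by parts together with the Robin boundary conditions produces the bilinear form $\langle v^{k+1},v^{k+1}\rangle_a$ (see the definition in Subsection on parabolic problems), which is nonnegative and dominates $\lambda\|\nabla v^{k+1}\|^2$. The convexity identity $(v^{k+1}-v^k)v^{k+1}\geq\frac12[(v^{k+1})^2-(v^k)^2]$ handles the time-difference term. Multiplying by $2h$, summing over $k=0,\ldots,j-1$, applying Young's inequality to $\int g^{k+1}v^{k+1}dx$, and invoking (\ref{f1-2}) yields
\begin{equation*}
\|v^j\|_{L^2(\Omega)}^2+2\lambda h\sum_{k=0}^{j-1}\|\nabla v^{k+1}\|_{L^2(\Omega)}^2\leq \|v_0\|_{L^2(\Omega)}^2+\|g\|_{L^2(Q_{\mathcal T})}^2+h\sum_{k=0}^{j-1}\|v^{k+1}\|_{L^2(\Omega)}^2.
\end{equation*}
Discrete Gronwall applied to $\max_j\|v^j\|^2$ then gives the $L^\infty(0,\mathcal T;L^2)$ bound, and the $L^2(0,\mathcal T;H^1)$ bound follows by feeding this back into the above.

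For part (ii), the key step is to test (\ref{paeq1}) with the discrete time derivative $(v^{k+1}-v^k)/h$. Integration by parts with the Robin boundary conditions converts $\int_\Omega\mathcal L v^{k+1}\cdot(v^{k+1}-v^k)/h\,dx$ into bulk terms like $\int_\Omega\nabla_h v^{k+1}\cdot\nabla_h(v^{k+1}-v^k)/h\,dx$ and $\int_\Omega a\,\partial_zv^{k+1}\partial_z(v^{k+1}-v^k)/h\,dx$, plus the analogous boundary contributions $\int_{\Gamma_\ll}\alpha v^{k+1}(v^{k+1}-v^k)/h\,d\Gamma_\ll$ and $\int_{\Gamma_{01}}a\beta v^{k+1}(v^{k+1}-v^k)/h\,dx'$. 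Because $a,\alpha,\beta$ do not depend on time and are nonnegative, each of these admits a convexity inequality of the type $X(X-Y)\geq\frac12(X^2-Y^2)$, so the whole collection telescopes into the energy
\begin{equation*}
E(v)=\tfrac12\int_\Omega(|\nabla_h v|^2+a|\partial_zv|^2)dx+\tfrac12\int_{\Gamma_\ll}\alpha v^2 d\Gamma_\ll+\tfrac12\int_{\Gamma_{01}}a\beta v^2 dx'.
\end{equation*}
Young's inequality on the right-hand side then gives
\begin{equation*}
\tfrac{h}{2}\Big\|\tfrac{v^{k+1}-v^k}{h}\Big\|_{L^2(\Omega)}^2+E(v^{k+1})-E(v^k)\leq \tfrac{h}{2}\|g^{k+1}\|_{L^2(\Omega)}^2.
\end{equation*}
Summing in $k$, using (\ref{f1-2}) and the trace inequality to bound $E(v_0)\leq C\|v_0\|_{H^1(\Omega)}^2$, I obtain both the uniform $L^\infty(0,\mathcal T;H^1)$ bound for $v_N$ and a control of the discrete time derivative $h\sum_k\|(v^{k+1}-v^k)/h\|_{L^2(\Omega)}^2$.

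To upgrade to the $L^2(0,\mathcal T;H^2)$ bound, I view each $v^{k+1}$ as the solution of the elliptic problem from Proposition \ref{prop.reg.elliptic.general} with right-hand side $g^{k+1}-(v^{k+1}-v^k)/h$ and zero Robin data, obtaining
\begin{equation*}
\|v^{k+1}\|_{H^2(\Omega)}^2\leq C\Big(\|g^{k+1}\|_{L^2(\Omega)}^2+\Big\|\tfrac{v^{k+1}-v^k}{h}\Big\|_{L^2(\Omega)}^2+\|v^{k+1}\|_{H^1(\Omega)}^2\Big);
\end{equation*}
multiplying by $h$ and summing over $k$ gives $\|v_N\|_{L^2(0,\mathcal T;H^2)}^2$ bounded in terms of the quantities already controlled. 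The main technical obstacle is the careful treatment of the Robin boundary contributions in the $H^1$ step: they are genuine boundary energies that must be grouped with the bulk energy $E(v)$ so that the whole sum telescopes; this is exactly where time-independence of $a,\alpha,\beta$ is indispensable, and it is also the reason the constant in (ii) inherits the dependence on $\|\alpha\|_{W^{1,\infty}}$, $\|\beta\|_{W^{1,\infty}}$ and on the modulus of continuity of $a$ coming from Proposition \ref{prop.reg.elliptic.general}.
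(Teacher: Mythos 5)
Your proposal is correct and follows essentially the same strategy as the paper: discrete energy estimates for the implicit Euler scheme, testing first with $v^{k+1}$ and then with the discrete time derivative $(v^{k+1}-v^k)/h$, exploiting the time-independence and sign of $a,\alpha,\beta$ so that the bilinear form $\langle\cdot,\cdot\rangle_a$ telescopes, and finally invoking the $H^2$ elliptic estimate of Proposition~\ref{prop.reg.elliptic.general} on $\mathcal L v^{k+1}=g^{k+1}-(v^{k+1}-v^k)/h$. The only small divergence is in part (i): you apply Young's inequality pointwise in $k$ together with (\ref{f1-2}) and then close with a discrete Gronwall, whereas the paper instead uses the $L^1$-in-time bound (\ref{f1-1}), pairs $\sum_k\|g^{k+1}\|$ against $\sup_k\|v^{k+1}\|$, and closes by a supremum argument plus Young, thereby avoiding Gronwall altogether. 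Both routes yield a constant depending only on $\lambda$ and $\mathcal T$, so this is an internal variation rather than a different method; your Gronwall-based constant grows exponentially in $\mathcal T$, while the paper's grows like $\sqrt{\mathcal T}$, which is slightly sharper but immaterial here.
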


\begin{proof}
(i) Testing (\ref{paeq1}) by $v^{k+1}$, and summing the
results with respect to $k$ from $0$ to $M$ yields
\begin{equation}
  h\sum_{k=0}^{M}\langle v^{k+1}, v^{k+1}\rangle_a+\sum_{k=0}^{M}(v^{k+1}-v^k,v^{k+1}) =h\sum_{k=0}^{M}(g^{k+1}, v^{k+1}), \label{paineq1}
\end{equation}
for $M=0,1,\cdots,N-1$.
Straightforward calculations yield
\begin{eqnarray}
  \sum_{k=0}^{M}(v^{k+1}-v^k, v^{k+1})&=&\frac12\left(\|v^{M+1} \|_{L^2(\Omega)}^2 -\|v_0\|_{L^2(\Omega)}^2+\sum_{k=0}^{M} \|v^{k+1}-v^k\|_{L^2(\Omega)}^2 \right)\nonumber\\
  &\geq& \frac12\left(\|v^{M+1} \|_{L^2(\Omega)}^2 -\|v_0\|_{L^2(\Omega)}^2\right). \label{paineq2}
\end{eqnarray}
It follows from the Cauchy inequality that
\begin{eqnarray}
  \sum_{k=0}^M(g^{k+1},v^{k+1})
  &\leq&\sup_{0\leq k\leq M}\|v^{k+1}\|_{L^2(\Omega)}  \sum_{k=0}^M\|g^{k+1}\|_{L^2(\Omega)}\nonumber\\
  &\leq&\sup_{0\leq k\leq N-1}\|v^{k+1}\|_{L^2(\Omega)}  \sum_{k=0}^{N-1}\|g^{k+1}\|_{L^2(\Omega)}.\label{paineq3}
\end{eqnarray}
Thanks to (\ref{paineq2}) and (\ref{paineq3}) it
follows from (\ref{paineq1}) using (\ref{f1-1}) that
\begin{eqnarray}
  &&h\sum_{k=0}^{M}\langle v^{k+1}, a^{k+1}\rangle_a+\frac12\|v^{M+1}\|_{L^2(\Omega)}^2 \nonumber\\
  &\leq&h\sup_{0\leq k\leq N-1}\|v^{k+1}\|_{L^2(\Omega)}  \sum_{k=0}^{N-1}\|g^{k+1}\|_{L^2(\Omega)}+\frac12\|v_0\|_{L^2(\Omega)}^2\nonumber\\
  &\leq&\sup_{0\leq k\leq N-1}\|v^{k+1}\|_{L^2(\Omega)}
  \sqrt \mathcal T\|g\|_{L^2 (Q_{\mathcal T})}+\frac12\|v_0\|_{L^2(\Omega)}^2, \label{paineq4}
\end{eqnarray}
for $M=0,1,\cdots,N-1$. Setting
\beq
A=\sup_{0\leq M\leq N-1}\left( h\sum_{k=0}^{M}\langle v^{k+1}, v^{k+1}\rangle_a+\frac12\|v^{M+1}\|_{L^2(\Omega)}^2 \right),
\eeq
it follows from the Young inequality and (\ref{paineq4}) that
\begin{eqnarray}
  A&\leq& \sqrt{2A}\sqrt\mathcal T\|g\|_{L^2 (Q_{\mathcal T})}+\frac12\|v_0\|_{L^2(\Omega)}^2\nonumber\\
  &\leq&\frac12A+  \mathcal T\|g\|_{L^2 (Q_{\mathcal T})}^2 + \frac12\|v_0\|_{L^2(\Omega)}^2,
\end{eqnarray}
and thus
\beq
A\leq 2\mathcal T\|g\|_{L^2 (Q_{\mathcal T})}^2  + \|v_0\|_{L^2(\Omega)}^2.
\eeq
Thanks to this, and recalling that $\langle v, v\rangle_a\geq\lambda\|\nabla v\|_{L^2(\Omega)}^2$, one obtains
\begin{equation}
   h\lambda\sum_{k=0}^{N-1}\|\nabla v^{k+1}\|_{L^2(\Omega)}^2
  +\sup_{0\leq k\leq N-1} \|v^{k+1}\|_{L^2(\Omega)}^2
  \leq  8\mathcal T\|g\|_{L^2 (Q_{\mathcal T})}^2  +4 \|v_0\|_{L^2(\Omega)}^2,\label{estpa1}
\end{equation}
from which, recalling the definition of $v_N$, (i) follows.

(ii) Testing (\ref{paeq1}) by $v^{k+1}-v^k$, and summing the resultants with
respect to $k$ from $0$ to $M$, for $M=0,1,\cdots,N-1$, yields
\begin{equation}
  \sum_{k=0}^M\left(\langle v^{k+1}, v^{k+1}-v^k\rangle_a+\frac1h\|v^{k+1}-v^k\|_{L^2(\Omega)}^2\right)= \sum_{k=0}^M(g^{k+1},v^{k+1}-v^k),
\end{equation}
from which, noticing that
\begin{equation}
  \sum_{k=0}^M\langle v^{k+1},v^{k+1}-v^k\rangle_a=\frac12 \left(\langle v^{M+1},v^{M+1}\rangle_a- \langle v_0, v_0\rangle_a
  +\sum_{k=0}^M\langle v^{k+1}-v^k,v^{k+1}-v^k\rangle_a\right),
\end{equation}
one obtains
\begin{eqnarray}
   \langle v^{M+1},v^{M+1}\rangle_a
  +\frac2h\sum_{k=0}^M \| {v^{k+1}-v^k} \|_{L^2(\Omega)}^2
  \leq2\sum_{k=0}^M(g^{k+1},v^{k+1}-v^k)+\langle v_0, v_0\rangle_a,
\end{eqnarray}
for $M=0,1,\cdots,N-1$.
Applying the Cauchy inequality to the righthand side of the above equality, and using (\ref{f1-2}), one obtains
\begin{eqnarray}
  2\sum_{k=0}^M(g^{k+1},v^{k+1}-v^k)&\leq& 2\sum_{k=0}^M\|g^{k+1}\|_{L^2(\Omega)}\|v^{k+1}-v^k\|_{L^2(\Omega)}\nonumber\\
  &\leq&\frac1h\sum_{k=0}^M\|v^{k+1}-v^k\|_{L^2(\Omega)}^2 +h\sum_{k=0}^M\|g^{k+1}\|_{L^2(\Omega)}^2\nonumber\\
  &\leq&\frac1h\sum_{k=0}^M\|v^{k+1}-v^k\|_{L^2(\Omega)}^2 +\|g\|_{L^2(Q_{\mathcal T})}^2,
\end{eqnarray}
which, plugged into the previous inequality, yields
\begin{eqnarray}
   \langle v^{M+1},v^{M+1}\rangle_a
  +h\sum_{k=0}^M\left\|\frac{v^{k+1}-v^k}{h} \right\|_{L^2(\Omega)}^2
  \leq \|g\|_{L^2(Q_{\mathcal T})}^2+\langle v_0, v_0\rangle_a,
\end{eqnarray}
for $M=0,1,\cdots,N-1$, and thus
\begin{equation}\label{estpa2}
  \sup_{1\leq k\leq N}\langle v^k, v^k\rangle_a+h\sum_{k=0}^{N-1}\left\|\frac{v^{k+1}-v^k}{h} \right\|_{L^2(\Omega)}^2  \leq \|g\|_{L^2(Q_{\mathcal T})}^2+\langle v_0, v_0\rangle_a.
\end{equation}

Combining (\ref{f1-2}), (\ref{estpa1}) and (\ref{estpa2}),
and applying the $H^2$ estimate to the elliptic operator $\mathcal L$, there is a positive
constant $C$ depending only on $\Omega$, $\lambda$, the modulus of continuity
of $a$, $\|a\|_{L^\infty(\Omega)}$, $\|\partial_za\|_{L^\infty(\Omega)}$, $\|\alpha\|_{W^{1,\infty}(\Omega)}$ and $\|\beta\|_{W^{1,\infty}(\Omega)}$, such that
\begin{eqnarray}
  h\sum_{k=1}^{N-1}\|v^{k+1}\|_{H^2(\Omega)}^2&\leq&C h\sum_{k=1}^{N-1}(\|\mathcal Lv^{k+1}\|_{L^2(\Omega)}^2+\|v^{k+1}\|_{H^1(\Omega)}^2) \nonumber\\
  &\leq&Ch\sum_{k=1}^{N-1} \left(\left\|g^{k+1}-\frac{v^{k+1}-v^k}{h}\right\|_{L^2(\Omega)}^2 +\|v^{k+1}\|_{H^1(\Omega)}^2\right)\nonumber\\
  &\leq&C(\mathcal T^2+1)(\|g\|_{L^2(Q_{\mathcal T})}^2+\|v_0\|_{H^1(\Omega)}^2),
\end{eqnarray}
from which, recalling the definition of $v_N$ and (\ref{estpa2}), (ii) follows.
\end{proof}

The existence and uniqueness of weak and strong solutions to (\ref{paraboic2}) is now stated in the following proposition.

\begin{prop}\label{prop.weak.str.par.hom}
Given a positive time $\mathcal T\in(0,\infty)$ and the initial data $v_0\in L^2(\Omega)$, we
assume that (\ref{asum1}) holds, and that $g\in L^2(Q_{\mathcal T})$.

Then, there is a unique weak solution to (\ref{paraboic2}), satisfying
\begin{align*}
\|v\|_{L^\infty(0,\mathcal T; L^2(\Omega))}+\|v\|_{L^2(0,\mathcal T; H^1(\Omega))}
\leq& C(\|g\|_{L^2(Q_{\mathcal T})} +\|v_0\|_{L^2(\Omega)}),
\end{align*}
for a positive constant $C$ depending only on $\lambda$ and $\mathcal T$.

Moreover, if we assume in addition that $v_0\in H^1(\Omega)$, then the unique weak solution is a strong one, and satisfies
\begin{equation}
\|v\|_{L^\infty(0,\mathcal T; H^1(\Omega))}+\|v\|_{L^2(0,\mathcal T; H^2(\Omega))}+\|\partial_tv\|_{L^2(Q_\mathcal T)}\leq C(\|g\|_{L^2(Q_{\mathcal T})}+\|v_0\|_{H^1(\Omega)}),
\end{equation}
for a positive constant $C$ depending only on $\Omega$, $\lambda$,
$\mathcal T$, the modulus of continuity of $a$, $\|a\|_{L^\infty(\Omega)}$,
$\|\partial_za\|_{L^\infty(\Omega)}$, $\|\alpha\|_{W^{1,\infty}(\Omega)}$
and $\|\beta\|_{W^{1,\infty}(\Omega)}$.
\end{prop}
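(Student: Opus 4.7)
The plan is to build the solution as the limit of the time-discrete approximations $v_N$ defined by (\ref{paeq1})--(\ref{uN}), exploiting the uniform bounds already established in Lemma~\ref{parapri}. The argument has three stages: compactness and passage to the limit (existence), a Gronwall-type estimate on the difference of two weak solutions (uniqueness, and as a byproduct convergence of the whole sequence), and an upgrade to strong regularity when $v_0\in H^1(\Omega)$.

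For existence of a weak solution, I would also introduce the piecewise-affine-in-time interpolant $\tilde v_N$ of the nodal values $\{v^k\}$, so that $\partial_t\tilde v_N$ equals the step function with value $(v^{k+1}-v^k)/h$ on $[kh,(k+1)h)$; an elementary estimate shows $\|v_N-\tilde v_N\|_{L^2(Q_\mathcal T)}\to 0$. Lemma~\ref{parapri}(i) gives uniform bounds for $v_N$ (and $\tilde v_N$) in $L^\infty(0,\mathcal T;L^2(\Omega))\cap L^2(0,\mathcal T;H^1(\Omega))$, and the $g^{k+1}$ satisfy $g^N:=\sum_k \chi_{[kh,(k+1)h)}g^{k+1}\to g$ strongly in $L^2(Q_\mathcal T)$ by Lebesgue differentiation. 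Extracting a weakly convergent subsequence $v_N\rightharpoonup v$ in $L^2(0,\mathcal T;H^1(\Omega))$ and weak-$\ast$ in $L^\infty(0,\mathcal T;L^2(\Omega))$, I would test the discrete equation (\ref{paeq1}) against a smooth $\xi(x,t)$ with $\xi(\cdot,\mathcal T)=0$, use summation by parts in $k$ to transfer the difference quotient onto $\xi$, and pass to the limit. The discrete initial value $v_0$ is recovered in the limit through the summation-by-parts identity, and the bilinear form $\langle\cdot,\cdot\rangle_a$ (which absorbs the Robin boundary contributions on both $\Gamma_\ll$ and $\Gamma_{01}$) is linear and continuous on $H^1(\Omega)$, hence passes to the weak limit. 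The norm bound on $v$ follows from lower semicontinuity of norms under weak convergence.

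For uniqueness, let $w=v_1-v_2$ be the difference of two weak solutions with identical data. Then $w$ satisfies the weak formulation with $g\equiv 0$, $v_0\equiv 0$. A standard Steklov averaging in time renders $w$ itself an admissible test function, yielding
\begin{equation*}
\tfrac12\tfrac{d}{dt}\|w\|_{L^2(\Omega)}^2+\langle w,w\rangle_a=0\qquad\text{a.e.\ in }(0,\mathcal T).
\end{equation*}
Since $\alpha,\beta\geq 0$ and $a\geq\lambda>0$, we have $\langle w,w\rangle_a\geq 0$, so integrating in time and using $w(\cdot,0)=0$ gives $w\equiv 0$. This simultaneously shows that the whole sequence $v_N$ converges (not just a subsequence), so the approximation scheme defines $v$ unambiguously.

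For the strong-solution statement with $v_0\in H^1(\Omega)$, Lemma~\ref{parapri}(ii) together with the bound (\ref{estpa2}) on the discrete time-derivative yields uniform control of $v_N$ in $L^\infty(0,\mathcal T;H^1(\Omega))\cap L^2(0,\mathcal T;H^2(\Omega))$ and of $\partial_t\tilde v_N$ in $L^2(Q_\mathcal T)$. Aubin--Lions then gives strong convergence in $L^2(0,\mathcal T;H^1(\Omega))$, which allows us to identify the equation $\partial_t v+\mathcal Lv=g$ pointwise a.e.\ in $Q_\mathcal T$ by rewriting the discrete problem as $\partial_t\tilde v_N+\mathcal L v_N=g^N$ (modulo a vanishing commutator between $v_N$ and $\tilde v_N$) and passing to the limit. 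The Robin boundary conditions survive the limit because the trace $H^1(\Omega)\to L^2(\partial\Omega)$ is continuous and $v_N\to v$ strongly in $L^2(0,\mathcal T;H^1(\Omega))$. The continuity $v\in C([0,\mathcal T];H^1(\Omega))$ is the standard consequence of $v\in L^2(0,\mathcal T;H^2(\Omega))$ together with $\partial_t v\in L^2(Q_\mathcal T)$. The only genuinely delicate point is the reconciliation of the Robin boundary terms under weak convergence and the fact that the matching of $\partial_t\tilde v_N$ with $\partial_tv$ in the limit is where (\ref{estpa2}) is indispensable; once these two items are in place, the rest is routine bookkeeping.
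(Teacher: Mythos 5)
Your proposal is correct and follows essentially the same route as the paper: Rothe's time-discretization scheme (\ref{paeq1})--(\ref{uN}), the uniform bounds of Lemma \ref{parapri}, weak compactness and passage to the limit in the discrete weak formulation via summation by parts, and an upgrade to strong regularity using the $H^2$ bound; the paper identifies $\partial_tv\in L^2(Q_{\mathcal T})$ directly from the weak formulation rather than via the affine interpolant $\tilde v_N$, and recovers the Robin condition by integrating by parts in the weak formulation (which is the cleaner route, since strong $L^2(0,\mathcal T;H^1)$ convergence alone does not control $\partial_n v_N$). Your explicit Steklov-averaging uniqueness argument fills in a step the paper delegates to Ladyzhenskaya et al., and is valid.
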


\begin{proof}
Let $g^{k+1}$, $k=0,1,\cdots,N-1$, be the functions defined by (\ref{fkdf}), and set
\beq
g_N(t)=\sum_{k=0}^{N-1}\chi_{[kh,(k+1)h)}(t)g^{k+1}.
\eeq
Recalling that $g\in L^2(0,\mathcal T; L^2(\Omega))$, one can verify that $g_N\rightarrow g$, in $L^2(0,\mathcal T; L^2(\Omega))$, as $N\rightarrow\infty$. Let $v^{k+1}$, $k=0, 1, \cdots, N-1$, be the functions defined by (\ref{paeq1}). Let $v_N$ be the function given by (\ref{uN}).

By Lemma \ref{parapri}, the following estimate holds
\beq
\|v_N\|_{L^\infty(0,\mathcal T; L^2(\Omega))}+\|v_N\|_{L^2(0,\mathcal T; H^1(\Omega))}
\leq C(\|g\|_{L^2(Q_{\mathcal T})} +\|v_0\|_{L^2(\Omega)} ),
\eeq
for a positive constant $C$ depending only on $\lambda$ and $\mathcal T$; and if we assume
in addition that $v_0\in H^1(\Omega)$, then the following additional
estimate holds
\beq
\|v_N\|_{L^\infty(0,\mathcal T; H^1(\Omega))}+\|v_N\|_{L^2(0,\mathcal T; H^2(\Omega))}\leq C(\|g\|_{L^2(Q_{\mathcal T})}+\|v_0\|_{H^1(\Omega)}),
\eeq
for a positive constant $C$ depending only on $\Omega$, $\lambda$,
$\mathcal T$, the modulus of continuity of $a$, $\|a\|_{L^\infty(\Omega)}$,
$\|\partial_za\|_{L^\infty(\Omega)}$, $\|\alpha\|_{W^{1,\infty}(\Omega)}$
and $\|\beta\|_{W^{1,\infty}(\Omega)}$. Thanks to the above estimates,
there is a subsequence, still denoted by $\{v_N\}_{N=1}^\infty$,
and a function $v$, with $v\in L^\infty(0,\mathcal T; L^2(\Omega))\cap L^2(0,\mathcal T; H^1(\Omega))$,
such that
\begin{equation}\label{5.28.2}
\|v\|_{L^\infty(0,\mathcal T; L^2(\Omega))}+\|v\|_{L^2(0,\mathcal T; H^1(\Omega))}
\leq C(\|g\|_{L^2(Q_{\mathcal T})} +\|v_0\|_{L^2(\Omega)} ),
\end{equation}
for a positive constant $C$ depending only on $\lambda$ and $\mathcal T$, and
\beq
v_N\overset{*}{\rightharpoonup}v\mbox{ in }L^\infty(0,\mathcal T; L^2(\Omega)), \mbox{ and }v_N\rightharpoonup v\mbox{ in }L^2(0,\mathcal T; H^1(\Omega)).
\eeq
Moreover, if we assume in addition that $v_0\in H^1(\Omega)$, then the above
$v$ has the additional regularity that $v\in L^2(0,\mathcal T; H^2(\Omega))
\cap L^\infty(0,\mathcal T; H^1(\Omega))$, and
\begin{equation}\label{5.28.3}
\|v\|_{L^\infty(0,\mathcal T; H^1(\Omega))}+\|v\|_{L^2(0,\mathcal T; H^2(\Omega))}\leq C(\|g\|_{L^2(Q_{\mathcal T})}+\|v_0\|_{H^1(\Omega)}),
\end{equation}
for a positive constant $C$ depending only on $\Omega$, $\lambda$,
$\mathcal T$, the modulus of continuity of $a$, $\|a\|_{L^\infty(\Omega)}$,
$\|\partial_za\|_{L^\infty(\Omega)}$, $\|\alpha\|_{W^{1,\infty}(\Omega)}$
and $\|\beta\|_{W^{1,\infty}(\Omega)}$.

We take an arbitrary function $\eta\in C^1(\overline{Q_{\mathcal T}})$ with $\eta(\cdot,t)\equiv0$ when $t$ is close enough to $\mathcal T$, and set
$\eta^k=\eta(\cdot, hk)$, $k=0,1,\cdots,N-1$.
Then, for large enough $N$, one has $\eta^{N-1}=\eta(\cdot, (1-\frac1N)\mathcal T)\equiv0$. Define two functions $\eta_N$ and $\bar{\partial_{ t}}\eta_N$ as
\beq
\eta_N=\sum_{k=0}^{N-1}\chi_{[kh,(k+1)h)}(t)\eta^k,\quad\bar{\partial_{ t}}\eta_N=\sum_{k=0}^{N-1}\chi_{[kh,(k+1)h)}(t)\frac{\eta^{k+1}-\eta^{k}}{h}.
\eeq
Then, using $\eta\in C^1(\overline{Q_{\mathcal T}})$, one can show that
\beq
\eta_N\rightarrow\eta, \mbox{ in }L^2(0,\mathcal T; H^1(\Omega)),\quad\mbox{and}\quad\bar{\partial_{ t}}\eta_N\rightarrow\partial_t\eta,\mbox{ in }L^2(Q_{\mathcal T}).
\eeq

Taking $\eta^k$ as the testing function, and summing the resultant with
respect to $k$ from $0$ to $N-1$ yields
\begin{eqnarray}\label{paeq3}
  h\sum_{k=0}^{N-1}\langle v^{k+1},\eta^k\rangle_a+\sum_{k=0}^{N-1}(v^{k+1}-v^k,\eta^k) =h\sum_{k=0}^{N-1}(g^{k+1}, \eta^k).
\end{eqnarray}
Direct calculations yield
\begin{eqnarray}
  \sum_{k=0}^{N-1}(v^{k+1}-v^k,\eta^k)&=&(v^N, \eta^{N-1})-(v_0,\eta^0)-\sum_{k=1}^{N-1}(v^k,\eta^k-\eta^{k-1})\nonumber\\
  &=&-(v_0,\eta^0)-h\sum_{k=1}^{N-1}(v^k,\frac{\eta^k-\eta^{k-1}}{h})\nonumber\\
  &=&-(v_0,\eta^0)-h\sum_{k=0}^{N-2}(v^{k+1}, \frac{\eta^{k+1}-\eta^{k}}{h}),
\end{eqnarray}
from which, noticing that $\eta^{N}=\eta^{N-1}\equiv0$, one obtains
\beq
\sum_{k=0}^{N-1}(v^{k+1}-v^k,\eta^k)=-(v_0,\eta^0)-h\sum_{k=0}^{N-1}(v^{k+1}, \frac{\eta^{k+1}-\eta^{k}}{h}).
\eeq
Thanks to the above equality, it follows from (\ref{paeq3}) that
\begin{eqnarray}
  -h\sum_{k=0}^{N-1}(v^{k+1},\frac{\eta^{k+1}-\eta^{k}}{h}) +h\sum_{k=0}^{N-1}\langle v^{k+1},\eta^k \rangle_a =h\sum_{k=0}^{N-1}(g^{k+1}, \eta^k)+(v_0,\eta^0).
\end{eqnarray}

Recalling the definitions of $v_N, g_N, \eta_N$ and $\partial_{\bar t}\eta_N$, one can verify that the above equality is equivalent to
\beq
\int_0^T(-(v_N,\bar{\partial_{ t}}\eta_N)+\langle v_N, \eta_N\rangle_a)dt=\int_0^T(g_N, \eta_N) dt+(v_0,\eta(\cdot,0)),
\eeq
from which by letting $N\rightarrow \infty$ due to  $\eta_N\rightarrow\eta$ in $L^2(0,\mathcal T; H^1(\Omega))$ and $\bar{\partial_{ t}}\eta_N\rightarrow\partial_t\eta$ in $L^2(Q_{\mathcal T})$ one obtains
\beq
\int_0^T(-(v,\bar{\partial_{ t}}\eta)+\langle v, \eta\rangle_a )dt=\int_0^T(g, \eta) dt+(v_0,\eta(\cdot,0)).
\eeq
Therefore, $v$ is a weak solution to (\ref{paraboic2}) fulfilling the estimate (\ref{5.28.2}). Moreover, if in addition $v_0\in H^1(\Omega)$, one has the additional regularities that $v\in L^2(0,\mathcal T; H^2(\Omega))\cap L^\infty(0,\mathcal T; H^1(\Omega))$, and the estimate (\ref{5.28.3}) holds. This proves the existence part of the proposition, while the uniqueness can be proven in the
standard way, see e.g. Ladyzhenskaya et al.\,\cite{LSU}.

We now prove that the weak solutions just established are strong ones and satisfy the corresponding estimate,
if $v_0\in H^1(\Omega)$. Recall that, in this case,
one has $v\in L^2(0,\mathcal T; H^2(\Omega))\cap L^\infty(0,\mathcal T; H^1(\Omega))$, and (\ref{5.28.3}) holds.
Thanks to this fact, and noticing that the
equation in (\ref{paraboic2}) is satisfied in the sense of distribution,
one obtains $\partial_tv\in L^2(Q_{\mathcal T})$, which, along with
$v\in L^2(0,\mathcal T; H^2(\Omega))$ and $g\in L^2(Q_{\mathcal T})$, in turn implies that
the equation in (\ref{paraboic2}) is
satisfied, a.e.\,in $Q_{\mathcal T}$. Thus, recalling the estimate (\ref{5.28.3}), one obtains the desired $L^2(Q_\mathcal T)$ estimate for $\partial_tv$,
stated in the proposition. The regularities $v\in L^2(0,\mathcal T; H^2(\Omega))$ and $\partial_tv\in L^2(Q_{\mathcal T})$
imply $v\in C([0,\mathcal T]; H^1(\Omega))$,
from which, using the weak formula of weak solution to (\ref{paraboic2}),
one can see that $v(\cdot,0)=v_0$. Hence the initial condition is
fulfilled. Thanks to the regularities of $v$, by integration by parts
in the weak formula of (\ref{paraboic2}), one can further see that the
boundary conditions are satisfied in the sense of trace.
Therefore, $v$ is a strong solution to (\ref{paraboic2}), and satisfies the
desired estimate. This completes the
proof of Proposition \ref{prop.weak.str.par.hom}.
\end{proof}

Recalling (\ref{5.28.1}), as a direct corollary of Proposition \ref{prop.weak.str.par.hom}, one obtains
the existence and uniqueness of weak and strong solutions to (\ref{paraboic1}), which is stated in the following:

\begin{cor}\label{corexistparabolic}
Given a positive time $\mathcal T\in(0,\infty)$ and the initial data $u_0\in L^2(\Omega)$. We assume that (\ref{asum1}) and (\ref{asum2}) hold, and that
$f\in L^2(Q_{\mathcal T})$. Set
\beq
M_0=\|\varphi\|_{L^2(0,\mathcal T;H^{\frac12}(\Gamma_\ll))} +\|\psi\|_{L^2(0,\mathcal T;H^{\frac12}(\Gamma_{01}))}+\|\partial_t\Phi\|_{L^2(0,\mathcal T; L^2(\partial\Omega))}.
\eeq

Then, there is a unique weak solution to (\ref{paraboic1}), satisfying
\beq
\|u\|_{L^\infty(0,\mathcal T; L^2(\Omega))}+\|u\|_{L^2(0,\mathcal T; H^1(\Omega))}
\leq C(\|f\|_{L^2(Q_{\mathcal T})} +\|u_0\|_{L^2(\Omega)}+M_0),
\eeq
for a positive constant $C$ depending only on $\lambda$ and $\mathcal T$.

Moreover, if we assume in addition that $u_0\in H^1(\Omega)$, then the unique weak solution is a strong one, and satisfies
\begin{equation}
\|v\|_{L^\infty(0,\mathcal T; H^1(\Omega))}+\|v\|_{L^2(0,\mathcal T; H^2(\Omega))}+\|\partial_tv\|_{L^2(Q_\mathcal T)}\leq C(\|f\|_{L^2(Q_{\mathcal T})}+\|u_0\|_{H^1(\Omega)}+M_0),
\end{equation}
for a positive constant $C$ depending only on $\Omega$, $\lambda$,
$\mathcal T$, the modulus of continuity of $a$, $\|a\|_{L^\infty(\Omega)}$,
$\|\partial_za\|_{L^\infty(\Omega)}$, $\|\alpha\|_{W^{1,\infty}(\Omega)}$
and $\|\beta\|_{W^{1,\infty}(\Omega)}$.
\end{cor}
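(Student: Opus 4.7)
The plan is to reduce the nonhomogeneous boundary value problem (\ref{paraboic1}) to the homogeneous one (\ref{paraboic2}) via the lifting $\mathcal U_\Phi$ that was constructed just after the statement of the two definitions of weak and strong solutions, and then quote Proposition \ref{prop.weak.str.par.hom}. No new ingredients beyond those already established in the section are required; the corollary is essentially a bookkeeping step, so the main task is to check that each norm in (\ref{5.28.1}) is absorbed correctly into the final estimate.

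First, I would recall that for each $t\in[0,\mathcal T]$ the elliptic lift $\mathcal U_\Phi(\cdot,t)$ is well defined by Propositions \ref{prop.weak.elliptic} and \ref{prop.reg.elliptic.general}, and that the bound (\ref{5.28.1}) controls $\mathcal U_\Phi$ in $L^\infty(0,\mathcal T;H^1(\Omega))\cap L^2(0,\mathcal T;H^2(\Omega))$ together with $\partial_t\mathcal U_\Phi$ in $L^2(0,\mathcal T;H^1(\Omega))$ by the single quantity $M_0$. Next, I would observe that $u$ is a weak (respectively strong) solution of (\ref{paraboic1}) if and only if $v:=u-\mathcal U_\Phi$ is a weak (respectively strong) solution of (\ref{paraboic2}) with data
\[
g=f-\partial_t\mathcal U_\Phi,\qquad v_0=u_0-\mathcal U_\Phi(\cdot,0),
\]
as was already noted just before the statement of Proposition \ref{prop.weak.str.par.hom}. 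Because $\partial_t\mathcal U_\Phi\in L^2(Q_\mathcal T)$ by (\ref{5.28.1}), we have $g\in L^2(Q_\mathcal T)$ with $\|g\|_{L^2(Q_\mathcal T)}\le\|f\|_{L^2(Q_\mathcal T)}+CM_0$, and the trace theorem together with the continuous embedding $L^2(0,\mathcal T;H^1(\Omega))\cap H^1(0,\mathcal T;H^1(\Omega))\hookrightarrow C([0,\mathcal T];H^1(\Omega))$ gives $\mathcal U_\Phi(\cdot,0)\in L^2(\Omega)$ (and in $H^1(\Omega)$ if $u_0\in H^1(\Omega)$) with $\|\mathcal U_\Phi(\cdot,0)\|_{L^2}\le CM_0$ (respectively $\|\mathcal U_\Phi(\cdot,0)\|_{H^1}\le CM_0$). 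Thus $v_0$ has the same regularity as $u_0$ with the appropriate bound.

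Having verified the hypotheses of Proposition \ref{prop.weak.str.par.hom}, I would apply it to obtain existence, uniqueness, and the corresponding energy estimates for $v$. For the weak solution part, the estimate
\[
\|v\|_{L^\infty(0,\mathcal T;L^2(\Omega))}+\|v\|_{L^2(0,\mathcal T;H^1(\Omega))}\le C\bigl(\|g\|_{L^2(Q_\mathcal T)}+\|v_0\|_{L^2(\Omega)}\bigr)
\]
combined with the triangle inequality $\|u\|\le\|v\|+\|\mathcal U_\Phi\|$ in each of the norms appearing on the left, together with (\ref{5.28.1}), yields the first estimate stated in the corollary. For the strong solution part, I would additionally note that $\partial_tu=\partial_tv+\partial_t\mathcal U_\Phi$, so the $L^2(Q_\mathcal T)$ bound on $\partial_tu$ follows from the corresponding bound for $\partial_tv$ given by Proposition \ref{prop.weak.str.par.hom} plus the $\partial_t\mathcal U_\Phi$ term in (\ref{5.28.1}). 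Uniqueness transfers back immediately because the correspondence $u\leftrightarrow v$ is affine. Since every step is either a direct appeal to a previously proved proposition or a triangle-inequality absorption, there is no substantive obstacle; the only small care needed is in tracking the constant $C$ in the two regimes so that it depends only on the quantities listed in the statement.
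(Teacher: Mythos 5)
Your proposal is correct and is essentially identical to the paper's argument: the paper also treats the corollary as a direct consequence of Proposition \ref{prop.weak.str.par.hom} via the lifting $v=u-\mathcal U_\Phi$ together with the estimate (\ref{5.28.1}), which is exactly the reduction and triangle-inequality bookkeeping you describe.
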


\noindent\textbf{Acknowledgments.} SH thanks to the Austrian Science Fund for the support via
the Hertha-Firnberg project T-764. RK acknowledges support by the Deutsche Forschungsgemeinschaft, Grant SFB 1114, KL 611/24.
The work of EST was supported in part by the ONR grant N00014-15-1-2333 and the NSF grants DMS-1109640 and DMS-1109645.

\end{document}